\pgfplotsset{compat=1.16} 
\newcommand{\supp}{\operatorname{supp}}
\newcommand{\reg}{\operatorname{reg}}
  \newcommand{\lcm}{\operatorname{lcm}}
\newcommand{\Floor}[1]{\left\lfloor #1 \right\rfloor}
\newcommand{\Floorfrac}[2]{\left\lfloor\frac{#1}{#2}\right\rfloor}
\newcommand{\Ceil}[1]{\left\lceil #1 \right\rceil}
\DeclareMathOperator{\red}{Red}
\DeclareMathOperator{\den}{den}
\DeclareMathOperator{\argmin}{argmin}
\DeclareMathOperator{\Span}{Span}
\newcommand{\set}[1]{\left\{#1\right\}}
\newcommand{\size}[1]{\left|#1\right|}
\newcommand{\calX}{\mathcal X}
\colorlet{darkred}{red!80!black}
\colorlet{darkblue}{blue!80!black}
\colorlet{darkgreen}{green!50!black}
\def\lm{\textrm{\small LM}}
\newcommand{\Q}{\mathbb Q}
\newcommand{\Z}{\mathbb Z}
\newcommand{\M}{\mathbb M}
\newcommand{\N}{\mathbb N}
\newcommand{\R}{\mathbb R}
\newcommand{\F}{\mathbb F}
\newcommand{\Pp}{\mathbb P}
\newcommand{\K}{{\mathbb K}}
\newcommand{\cl}[1]{\mathcal{#1}}
\newcommand{\la}{\langle}
\newcommand{\ra}{\rangle}
\def\a{{\bf a}}
\def\uu{{\bf u}}
\def\vv{{\bf v}}
\def\x{{\bf x}}
\def\ev{{\text{ev}}}
\newcommand{\cG}{\cl G}
\def\Tia{\Tilde{a}}
\def\Tid{\Tilde{d}}
\newtheorem{theorem}{Theorem}[section]
\newtheorem{lemma}[theorem]{Lemma}
\newtheorem{corollary}[theorem]{Corollary}
\newtheorem{proposition}[theorem]{Proposition}
\newtheorem{definition}[theorem]{Definition}
\newtheorem{example}[theorem]{Example}
\newtheorem{remark}[theorem]{Remark}
\numberwithin{equation}{section}
\renewcommand{\P}{{\mathbb P}}
\begin{document}

\title{Codes on Weighted
Projective Planes}


\author{Yağmur Çakıroğlu}
\author{Jade Nard\.{i}}
\author{Mesut \c{S}ah\.{i}n}
\address{ Department of Mathematics,
  Hacettepe  University,
  Ankara, TURKEY}
\address{Irmar, Ufr Mathématiques, Université de Rennes 1, Rennes, FRANCE}
\curraddr{}
\email{yagmur.cakiroglu@hacettepe.edu.tr}
\email{jade.nardi@univ-rennes.fr}
\email{mesut.sahin@hacettepe.edu.tr}
\thanks{The first and third authors are supported by T\"{U}B\.{I}TAK Project No:119F177. This article is part of the first author's Ph.D. thesis under the supervision of the third author. The second author is supported  by the French National Research Agency through ANR \textit{Barracuda} (ANR-21-CE39-0009) and the French government \textit{Investissements d’Avenir} program ANR-11-LABX-0020-01.}

\subjclass[2020]{Primary 14M25; 14G05
; Secondary 94B27
; 11T71}

\date{}


\begin{abstract} We comprehensively study weighted projective Reed-Muller (WPRM) codes on weighted projective planes $\P(1,a,b)$. We provide the universal Gr\"obner basis for the vanishing ideal of the set $Y$ of $\F_q$--rational points of $\P(1,a,b)$ to get the dimension of the code. We determine the regularity set of $Y$ using a novel combinatorial approach. We employ footprint techniques to compute the minimum distance.
\end{abstract}
\keywords{toric code, weighted projective space, error-correcting code, Reed-Muller code}
\maketitle 

\section{Introduction}

Let $\F_q$ be the finite field with $q$ elements and $\K$ be its algebraic closure. We study linear codes from weighted projective planes $\P(1,a,b)$ for two positive integers $a\leq b$.  Recall that the \emph{weighted projective plane} is the quotient space
\[\P(1,a,b)=(\K^{3}\setminus \{0\})/\K^*\] under the following equivalence relation: for every $(x_0,x_1,x_2) \in \K^3$,
\begin{equation*}
(x_0,x_1,x_2)\sim(\lambda^{}x_0,\lambda^{a}x_1,\lambda^{b}x_2) \mbox{ for } \lambda \in \K^*.
\end{equation*}
We can assume without loss of generality that the integers $a$ and $b$ are coprime as $\P(1,a,b) \simeq \P(1,ac,bc)$ for every positive integer $c$ (see \cite[Lemma 1.1]{Dol81}).

It is well known that the set $\P(1,a,b)(\F_q)$  of $\F_q$-rational points consists of equivalence classes $[x_0:x_1:x_2]$ having representatives with all coordinates $x_0,\: x_1, \: x_2$ lying in $\F_q$, see \cite[Lemmas 6 and 7]{Per03}. More precisely, one can choose representatives $[1:y_1:y_2]$ where $(y_1,y_2) \in \F_q^2$, together with $[0:y_1:y_2]$ as given in Remark~\ref{rk:rat_pts}.

The weighted projective plane $\Pp(1,a,b)$ is a simplicial toric variety. It thus comes with the polynomial ring $S=\F_q[x_0,x_1,x_2]$, which is graded via $\deg x_0 =1$ $\deg x_1 =a$ and $\deg x_2 =b$. A degree $d\in \N$ defines a polygon 
\begin{equation}\label{eq:Pd}
P_d:=\set{(x,y)\in \R^2 \: : \: x\geq 0, \: y \geq 0, \: ax+by \leq d}
\end{equation}
and the integral points of $P_d$ give rise to monomials of degree $d$: 
\begin{equation}\label{eq:Md}
\M_d:=\set{\x^{\a,d}:=x_0^{d-aa_1-ba_2}x_1^{a_1}x_2^{a_2} :  \a=(a_1,a_2)\in P_d \cap \Z^2}.
\end{equation}
The polynomial ring $S$ can thus be written as follows.
\begin{equation*}\label{homgrdring}
S= \bigoplus_{d\geq 0 } S_d \text{ where } S_d=\Span \M_d.
\end{equation*}

In order to obtain a linear code, we evaluate homogeneous polynomials of degree $d \geq 1$ at a subset $Y=\{P_1,\dots,P_n\}\subseteq \P(1,a,b)(\F_q)$ of $\F_q$-rational points. As points of $\P(1,a,b)$ are orbits, we have to choose representatives for each of the points: given a point $P \in \P(1,a,b)(\F_q)$, we choose a representative triple $(y_0,y_1,y_2)\in\F_q^3$ and we define $f(P)=f(y_0,y_1,y_2)$ for any $f \in S_d$. This defines the following evaluation map
\begin{align}\label{evmap}
\ev_Y&:\begin{array}{rcl}
S_d&\rightarrow& \F_q^{n}\\
f&\mapsto&\left(f(P_1),\dots,f(P_n)\right)
\end{array}
\end{align}
whose image, denoted by $C_{d,Y}$, is the evaluation code at $Y$ of degree $d$.
A different choice of representatives gives monomially equivalent code, \textit{i.e.} each coordinate is multiplied by a non-zero element of $\F_q$.

We write $[n,k,d_{min}]_q$ for the main parameters of the code, which we recall next. The \emph{length} $n$ is the cardinality $|Y|$ of the set $Y$. The \emph{dimension} $k$ is the dimension of $C_{d,Y}$ as an $\F_q$--vector space. The \emph{Hamming weight} $\omega(c)$ of a codeword $c=\ev_Y(f)=(f(P_1),\dots,f(P_n))\in C_{d,Y}$ is the number of non--zero components of $c$ which is also $n-|V_Y(f)|$, where $V_Y(f):=\{P\in Y: f(P)=0\}$. Finally, the \emph{minimum distance} $d_{min}$ of $C_{d,Y}$ is the minimum weight among all codewords $c\in C_{d,Y}\backslash\{0\}$.

If $Y$ consists only of affine points, \textit{i.e.} of the form $(1,x,y)$ for $(x,y) \in \F_q^2$, then the code $C_{d,Y}$ is called the weighted (affine) Reed-Muller (WRM or WARM) code of $(a,b)$--weighted degree $d$. Its length is $n=\size{\F_q^2}=q^2$. The dimension and the minimum distance have been given by S\o rensen \cite[Theorem~1]{S1992}. 

If $Y$ is the whole set of $\F_q$-rational points of the weighted projective plane $\P(1,a,b)$, then the code is simply denoted by $C_{d}$ and called a weighted projective Reed-Muller (WPRM) code. Its length is $n=\size{\P(1,a,b)(\F_q)}=q^2+q+1$ for any integers $a$ and $b$. The dimension and the minimum distance were given explicitly by Aubry et al. in \cite{ACGLOR2017} only for small degrees $d\leq q$ which are multiple of both $a$ and $b$. Recently, \cite[Theorem 5.1]{AP24} relaxed the divisibility hypothesis but still gave the minimum distance when $d \leq q$.  Let us mention that S{\o}rensen \cite{S1992} also introduced some different codes bearing the name ``weighted projective Reed-Muller codes'', where the evaluation is done for a proper subspace of $\bigoplus_{d' \leq d} S_{d'}$ at the $\F_q$-rational points of the \emph{classical} projective plane $\P^2$.

In the present paper, we deal with the case $Y=\P(1,a,b)(\F_q)$ with arbitrary degree $d$, thus extending and generalizing the results of the paper by \c{C}ak\i ro\u{g}lu and \c{S}ahin \cite{CS2024} given for the case where $a=1$.  We leverage the fact that weighted projective spaces are toric varieties and we follow the strategy described in \cite{JNPro22}, which connects the combinatorics of the polygon $P_d$ (see Equation~\ref{eq:Pd}) and the parameters of the code $C_d$. 

\medskip

The paper is organized as follows. Section~\ref{sec:dim} displays the universal Gr\"obner basis for the vanishing ideal of $Y=\P(1,a,b)(\F_q)$ and the projective reduction of the polygon $P_d$. This provides a basis, and thus the dimension, of $C_d$ for any degree $d \geq 1$. In Section~\ref{sec:reg}, we determine the regularity set of $Y$ which helps eliminate the trivial codes as well as giving a lower bound for the minimum distance. In Section~\ref{sec:min-dist} we employ footprint techniques to bound the minimum distance from below. Candidates for minimal weight codewords are given in Section \ref{sec:poly_weight}. Our results for the minimum distance are summarized in Section \ref{sec:conclusion}, followed by some refinements in the cases for which we are only able to determine a lower bound the minimum distance.

\section{Dimension}\label{sec:dim}

In this section, our goal is to give a closed formula for the dimension of $C_d$. As an evaluation code, the code $C_d$ is isomorphic to the quotient of $S_d$ by the degree--$d$ part of the vanishing ideal of $Y$. In the case we are interested in, \textit{i.e}. $Y=\P(1,a,b)(\F_q)$, we know a generating set of the vanishing ideal of $Y$, which we prove to be a universal Gr\"obner basis. This allows us to identify a combinatorial set called a \emph{projective reduction} that corresponds to the standard monomials of degree $d$, giving rise to the basis of the code $C_d$.

\subsection{The vanishing ideal of the set of the rational points of $Y=\Pp(1,a,b)$}
In this subsection, we give a unique (up to a constant factor) minimal generating set for the vanishing ideal of $I(Y)$, which is both the universal Gr\"obner basis and the Graver basis.

An ideal generated by binomials $\x^{\uu}-\lambda\x^{\vv}$, with $\lambda \in \K \setminus \set{0}$, is called a \emph{binomial ideal}.  
The following reveals that the ideal $I(\P(1,a,b)(\F_q))$ is \emph{pure binomial}, \textit{i.e.} generated by pure difference binomials $\x^{\uu}-\x^{\vv}$.
 
\begin{theorem}\cite[Corollary 5.8]{MS2022}\label{thm:ideal} The vanishing ideal $I(\P(1,a,b)(\F_q))$ of the set of $\F_q$--points of $\P(1,a,b)$ is generated by the following binomials.
\begin{align*}
    f_0&=x_{1}x_{2}\left(x_2^{(q-1)a}-x_{1}^{(q-1)b}\right)\\
    f_1&=x_0x_{2}\left(x_2^{q-1}-x_0^{(q-1)b}\right) \\
    f_2&=x_0x_{1}\left(x_{1}^{q-1}-x_0^{(q-1)a}\right)
\end{align*}
\end{theorem}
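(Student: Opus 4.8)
The plan is to show that the vanishing ideal $I = I(\P(1,a,b)(\F_q))$ equals the ideal $J = (f_0, f_1, f_2)$ by establishing containment in both directions. The easy containment is $J \subseteq I$: I would simply verify that each $f_i$ vanishes at every $\F_q$-rational point of $\P(1,a,b)$. Using the representatives $[1:y_1:y_2]$ and $[0:y_1:y_2]$ described in the introduction, this reduces to checking that each binomial vanishes on these coordinates. For instance, at a point $[1:y_1:y_2]$ with $y_1 \in \F_q$, the factor $x_2^{q-1} - x_0^{(q-1)b}$ in $f_1$ becomes $y_2^{q-1} - 1$, which vanishes when $y_2 \neq 0$, while the prefactor $x_0 x_2$ kills the cases where a relevant coordinate is zero. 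I would run through each $f_i$ and each type of point, using the fundamental identity $t^{q-1} = 1$ for $t \in \F_q^*$ and $t^q = t$ for all $t \in \F_q$.

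\textbf{The hard part will be} the reverse containment $I \subseteq J$, which is the real content of the theorem. The standard strategy is a dimension or point-count argument: I would aim to show that the quotient $S/J$ (or rather its relevant structure as a coordinate-ring analogue) has the right size, namely that the zero locus of $J$ over $\F_q$ is exactly $\P(1,a,b)(\F_q)$ with the correct multiplicity structure, forcing $J$ to be radical with $V(J) = V(I)$. Since the theorem is attributed to \cite[Corollary 5.8]{MS2022}, I would lean on the machinery developed there for vanishing ideals of parametrized toric sets: weighted projective space is a simplicial toric variety, and $\P(1,a,b)(\F_q)$ arises as the image of a monomial parametrization by the torus together with its boundary strata. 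The lattice-ideal techniques for such toric codes typically show that the toric vanishing ideal is the lattice ideal associated to the kernel of the grading map, saturated appropriately, and that over $\F_q$ this lattice ideal is generated by binomials encoding the relations $y^{q-1} = 1$ on each torus factor.

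Concretely, I would decompose $\P(1,a,b)(\F_q)$ into torus orbits according to which coordinates vanish, and analyze $I$ on each stratum. On the big open torus (all coordinates nonzero), the relations come from $(\F_q^*)^2$ acting through the weighted grading, giving binomials like $x_2^{(q-1)a} - x_1^{(q-1)b}$ up to the weight normalization — this is the origin of $f_0$. The prefactors $x_1 x_2$, $x_0 x_2$, $x_0 x_1$ appearing in $f_0, f_1, f_2$ are exactly what extend these torus relations across the coordinate hyperplanes so that the binomials also correctly describe the boundary orbits where one coordinate is zero. The combinatorial obstacle is verifying that no further generators are needed, i.e.\ that these three binomials already cut out the full set-theoretic and scheme-theoretic structure; this is precisely where the careful saturation analysis of \cite{MS2022} does the work, and I would invoke their Corollary 5.8 rather than reprove the saturation from scratch.

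\textbf{Finally}, since the statement is quoted as a citation, the honest framing is that the proof is deferred to \cite{MS2022}; my own contribution would be to double-check the easy direction $J \subseteq I$ by direct evaluation (which is self-contained and instructive) and to confirm that the hypotheses of their general result — coprimality of $a$ and $b$, and the toric structure of $\P(1,a,b)$ — are met in our setting, so that their conclusion applies verbatim to yield the three displayed generators.
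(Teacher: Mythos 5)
The paper offers no proof of this statement at all---it is imported verbatim as a citation of \cite[Corollary 5.8]{MS2022}---and your proposal ultimately does the same, deferring the substantive containment $I \subseteq J$ to that reference while correctly verifying the easy containment $J \subseteq I$ by direct evaluation (your checks using $t^{q-1}=1$ for $t\in\F_q^*$ and the vanishing prefactors are sound, and well-definedness of ``vanishing at a point'' is unaffected by the choice of representative since the $f_i$ are homogeneous). Your sketch of the lattice-ideal/saturation strategy behind the hard direction is a plausible account of what \cite{MS2022} does, but since both you and the paper lean on that citation for the real work, the approaches coincide.
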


\begin{remark}The polynomials $f_i$ are numbered so that $f_i$ does not involve the variable $x_i$ for every $i \in \set{0,1,2}$.

It is clear that the monomials of $f_0,f_1,f_2$ cannot divide each other and so $|G(M_{I(Y)} )| = 6$ in the notation of \cite[Corollary 3.6]{CTV2016}. Thus, the binomials $f_0,f_1,f_2$ are \textit{indispensable}, \textit{i.e.} they appear (up to a nonzero constant) in every minimal binomial generating set of $I(Y)$. In other words, (up to a nonzero constant) $\{f_0,f_1,f_2\}$ is the unique minimal generating set.
\end{remark}

 We would like to obtain the \textit{universal} Gr\"obner basis of $I(Y)$ which is a Gr\"obner basis with respect to any monomial ordering. Since there are only finitely many distinct reduced Gr\"obner bases, their union is the universal Gr\"obner basis. To accomplish this goal, we appeal to \cite[Proposition 4.2]{CTV2016}, saying that a binomial in the universal Gr\"obner basis must be \textit{primitive,} which we define next.

\begin{definition}{\cite[Definition 4.1]{CTV2016}} A nonzero binomial $\x^{\uu}-\x^{\vv}:=x_0^{u_0}x_1^{u_1}x_2^{u_2}-x_0^{v_0}x_1^{v_1}x_2^{v_2}$ in a pure binomial ideal $I$ is called a \emph{primitive binomial} of $I$ if there exists no other binomial $\x^{\uu'}-\x^{\vv'}\in I\setminus \{0\}$ such that $\x^{\uu'}$
divides $\x^{\uu}$ and $\x^{\vv'}$
divides $\x^{\vv}$. The
set of all primitive binomials of $I$ is called the \emph{Graver basis} of $I$.
\end{definition}

We are ready to give the main result of this subsection showcasing a rare instance where an ideal has a unique minimal generating set which is both the universal Gr\"obner basis and the Graver basis. This generalizes \cite[Theorem 2.8]{BDG2019} which gives a reduced Gr\"obner basis which is also a universal Gr\"obner basis for the vanishing ideal of $Y=\Pp^m(\F_q)$ over an algebraic extension of $\F_q$.
\begin{theorem} \label{thm:UniversalBasis}
The generating set $\{f_0,f_1,f_2\}$ is both the universal Gr\"obner basis and the Graver basis of $I(Y)$. 
\end{theorem}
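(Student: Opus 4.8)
The plan is to sandwich $\{f_0,f_1,f_2\}$ between the universal Gr\"obner basis $\mathcal U(I)$ and the Graver basis $\mathrm{Gr}(I)$ of $I:=I(Y)$, and then collapse the sandwich. By \cite[Proposition 4.2]{CTV2016} every element of $\mathcal U(I)$ is primitive, so $\mathcal U(I)\subseteq\mathrm{Gr}(I)$, while the Remark after Theorem~\ref{thm:ideal} tells us that $f_0,f_1,f_2$ are indispensable, hence primitive, so $\{f_0,f_1,f_2\}\subseteq\mathrm{Gr}(I)$. Thus the whole theorem reduces to the single inclusion $\mathrm{Gr}(I)\subseteq\{f_0,f_1,f_2\}$: granting it, $\mathrm{Gr}(I)=\{f_0,f_1,f_2\}$ follows at once, and since $\mathcal U(I)$ is then a generating set of $I$ sitting inside the \emph{minimal} generating set $\{f_0,f_1,f_2\}$, minimality forces $\mathcal U(I)=\{f_0,f_1,f_2\}$ as well.

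The tool I would set up first is a combinatorial membership criterion for $I$. Evaluating a nonzero homogeneous binomial $\x^{\uu}-\x^{\vv}$ at the affine points $[1:s:t]$, $(s,t)\in\F_q^2$, and at the $q+1$ points at infinity shows that $\x^{\uu}-\x^{\vv}\in I$ exactly when $\deg\x^{\uu}=\deg\x^{\vv}$, the congruences $u_1\equiv v_1$ and $u_2\equiv v_2 \pmod{q-1}$ hold, and $\supp(\uu)=\supp(\vv)$ (the points at infinity being what forces the supports to agree in the variable $x_0$). In particular the two monomials of any binomial in $I$ share the same set of variables.

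To prove $\mathrm{Gr}(I)\subseteq\{f_0,f_1,f_2\}$, take a primitive binomial $\x^{\uu}-\x^{\vv}$ and set $\sigma:=\supp(\uu)=\supp(\vv)$. If $|\sigma|\le 1$ homogeneity makes it zero, so $|\sigma|\ge 2$. When $|\sigma|=2$, say $\sigma=\{1,2\}$, the degree identity $a(u_1-v_1)=b(v_2-u_2)$ together with $\gcd(a,b)=1$ and the congruences force $\uu-\vv$ to be a positive multiple of $(0,(q-1)b,-(q-1)a)$; primitivity (minimality under the divisibility order) then drives the multiple down to $1$ and each ``small'' exponent down to $1$, leaving exactly $f_0$. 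The cases $\sigma=\{0,2\}$ and $\sigma=\{0,1\}$ are symmetric and yield $f_1$ and $f_2$. This settles everything except the full-support case $|\sigma|=3$, where one must show that every such binomial is non-primitive, equivalently that its two monomials are simultaneously divisible by the two monomials of some $f_i$.

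I expect this full-support case to be the main obstacle. The naive reduction---cancel $\gcd(\x^{\uu},\x^{\vv})$ to obtain a strictly smaller binomial---is unavailable here, because $I$ is \emph{not} saturated with respect to $x_0x_1x_2$: stripping common factors generally produces two monomials with different supports, which by the criterion above no longer lie in $I$. One is therefore forced to locate a generator that fits inside the divisibility box $\{\uu'\le\uu,\ \vv'\le\vv\}$, and which generator fits, and in which orientation, depends delicately on the signs of $u_0-v_0$, $u_1-v_1$, $u_2-v_2$ and on whether the available exponents reach the thresholds $(q-1)a+1$ and $(q-1)b+1$ demanded by the generators. Organizing the argument by the (necessarily mixed) sign pattern of $\uu-\vv$ and checking, uniformly in $a,b,q$, that a suitable generator always fits is the technical heart of the proof, and the step I would expect to consume most of the effort and to require the most care.
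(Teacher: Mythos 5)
Your overall strategy---reduce everything to showing that the only primitive binomials of $I(Y)$ are $f_0,f_1,f_2$---is exactly the paper's, and two of your three ingredients are sound. The membership criterion (equal weighted degrees, $u_1\equiv v_1$ and $u_2\equiv v_2 \pmod{q-1}$, and $\supp(\uu)=\supp(\vv)$) is correct and is a self-contained substitute for the structural description the paper imports from \cite{MS2022}; your treatment of the supports of size two is complete and coincides with the paper's. The genuine gap is the one you flag yourself: for $|\sigma|=3$ you announce ``the technical heart'' and stop, and a proof cannot end where the hard case begins. For comparison, the paper dispatches this case in two lines: writing $\x^{\uu}-\x^{\vv}=\x^{\a}(\x^{m^+}-\x^{m^-})$ with $\x^{\a}=\gcd(\x^{\uu},\x^{\vv})$ of full support, it factors out $x_0^{a_0-1}x_1^{a_1-1}x_2^{a_2-1}$ and exhibits $x_0x_1x_2(\x^{m^+}-\x^{m^-})\in I(Y)$ as a binomial sitting inside the divisibility box, killing primitivity whenever $\a\neq(1,1,1)$.

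You should also know that your worry about this case is well founded, because the inclusion you are trying to prove seems to fail exactly there. Take $\P(1,2,3)$ and $g=x_0x_1x_2^{q}-x_0^{q}x_1^{q}x_2=x_0x_1x_2\bigl(x_2^{q-1}-x_0^{q-1}x_1^{q-1}\bigr)$. It is homogeneous of degree $3q+3$ and satisfies your membership criterion, so $g\in I(Y)$; yet enumerating all divisor pairs $(\x^{\uu'},\x^{\vv'})$ with $\x^{\uu'}\mid x_0x_1x_2^{q}$ and $\x^{\vv'}\mid x_0^{q}x_1^{q}x_2$ (same support, congruences mod $q-1$, equal degrees) returns only $g$ itself: for instance $x_0x_2^{q}\mid x_0x_1x_2^{q}$ but $x_0^{3q-2}x_2\nmid x_0^{q}x_1^{q}x_2$, and no orientation of $f_0$ or $f_2$ fits either. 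So $g$ is primitive in the sense of the definition used here and is none of $f_0,f_1,f_2$; note that the paper's factoring trick is silent precisely when $\gcd(\x^{\uu},\x^{\vv})=x_0x_1x_2$, which is the case for $g$. Your proposed sign-pattern analysis would therefore terminate without finding a generator in the box, and the sandwich $\mathcal{U}(I)\subseteq\mathrm{Graver}(I)\subseteq\{f_0,f_1,f_2\}$ cannot close. If what is needed downstream is only the Gr\"obner-basis statement (it is: the footprint arguments use a fixed lex order), the honest route is to verify Buchberger's criterion for $\{f_0,f_1,f_2\}$ directly for each admissible choice of leading terms, rather than passing through primitivity.
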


\begin{proof} Take a homogeneous binomial $\x^{\uu}-\x^{\vv}$ which is an element of the universal Gr\"obner basis of $I(Y)$. By \cite[Proposition 4.2]{CTV2016}, it must be primitive. Thus, it suffices to show that there is no primitive homogeneous binomial other than $f_0,f_1,f_2$.

By \cite[Proposition 5.6]{MS2022} and the proof of \cite[Theorem 3.7]{MS2022}, a homogeneous binomial in $I(Y)$ is of the form
\begin{equation*}
\x^{\uu}-\x^{\vv}=\x^{\a}(\x^{m^+}-\x^{m^-}) \text{ with } \supp(\x^{m^+})\cap \supp(\x^{m^-})=\emptyset, 
\end{equation*}
where $\supp(\x^{\a}):=\{j\in \set{0,1,2}: x_j \mid \x^{\a}\}$ and $m^+-m^-\in (q-1)L_{\beta(\varepsilon)}$. Recall that $\beta=\begin{pmatrix} 1 &  a & b \end{pmatrix}$, $\beta(\varepsilon)$ is the submatrix of the matrix $\beta$ with the columns $\beta_{j+1}$ where $j\in \varepsilon=\supp(\x^{\a}) \subseteq \{0,1,2\}$ and   $L_{\beta(\varepsilon)}$ is the lattice, which is the integer points of the kernel of the linear map represented by the matrix $\beta(\varepsilon)$.

Suppose that $\varepsilon=\set{0,1,2}$. Then, without loss of generality, we may assume that $\x^{\uu}-\x^{\vv}=x_0^{a_0}x_1^{a_1}x_2^{a_2}(x_2^{m^+_2}-x_0^{m^-_0}x_1^{m^-_1})$, with positive $a_0,a_1,a_2$. Then, 
\[\x^{\uu}-\x^{\vv}=x_0^{a_0-1}x_1^{a_1-1}x_2^{a_2-1}(x_0x_1x_2^{m_2^++1} - x_0^{m^-_0+1}x_1^{m^-_1+1}x_2).
\]
Thus, $\x^{\uu}-\x^{\vv}$ is not primitive, since $\x^{\uu'}-\x^{\vv'}:=x_0x_1x_2^{m_2^++1} - x_0^{m^-_0+1}x_1^{m^-_1+1}x_2$ belongs to $ I(Y)$ and we have both $\x^{\uu'}$
divides $\x^{\uu}$ and $\x^{\vv'}$
divides $\x^{\vv}$.

Since $L_{\beta(\varepsilon)}=\{0\}$, when $\size{\varepsilon}=1$, we just need to consider the case $\size{\varepsilon}=2$. If $\varepsilon=\{1,2\}$ then $\beta(\varepsilon)=[ a \: b]$, $L_{\beta(\varepsilon)}$ is the sublattice of $\Z^2$ spanned by $(b,-a)$ and we have $\x^{\uu}-\x^{\vv}=x_1^{a_1}x_2^{a_2}(x_1^{m^+_1}-x_2^{m^-_2})$ for some positive integers $a_1,a_2$ and $(m^+_1,0)-(0,m^-_2)=(q-1)l(b,-a)$ with a positive integer $l$. Thus, the monomials $x_1x_2x_1^{(q-1)b} \mid \x^{\uu}$ and $x_1x_2x_2^{(q-1)a} \mid \x^{\vv}$. The only primitive binomial with these properties is clearly $f_0$. The other two cases are done similarly.
\end{proof}

\subsection{Projective reduction}
We want to rely on the notion of \emph{projective reduction} of the polygon $P_d$ introduced in \cite[\textsection 3.1]{JNPro22} to get canonical representatives for the cosets in $S_d/I(Y)_d$ through the identification of lattice points of $P_d$ with monomials in $S_d$. We follow the notations introduced in \cite{JNPro22}, which generalizes the notion of projective reduction for the classical projective spaces given in \cite[\textsection 2.1]{BDG2019}. 

\begin{remark}
Remark~3.6 of \cite{JNPro22} assumes that the polytope is integral and claims that the \emph{projective reduction} is only relevant when the polytope has the same normal fan as the variety.
The latter condition is always satisfied on $\P(1,a,b)$ for any $P_d$. But, when $a$ (resp. $b$) does not divide $d$, the vertex $(d/a,0)$ (resp. $(0,d/b)$) of $P_d$ is not integral. This does not cause a problem because a vertex corresponds to a monomial involving only one variable, hence it cannot be equivalent to another monomial modulo $I(Y)$. In other words, the cosets in $S_d/I(Y)_d$ of these monomials are singletons. We just need to work out the other cosets, which can be done by the methods of \cite{JNPro22}. The reader is invited to consult \cite[Lemma 5.15 and Theorem 5.16]{MS2022} for more general toric varieties.
\end{remark}

Recall some definitions about lattice polygons. Let $P$ be a convex lattice polygon. We denote a face $Q$ of $P$ by $Q\prec P$ and define its \emph{interior} to be the set of points not lying on any proper face, \textit{i.e.}

\begin{equation*}\label{eq:interior}
P^{\circ}=P \setminus \bigcup_{\substack{Q\prec P \\Q \neq P}} Q.
\end{equation*}

By \cite[Theorem 3.5]{JNPro22}, a basis for the code  $C_d$ is given by certain integral points of the polygon $P_d$, that are chosen with respect to the following equivalence relation.

\begin{definition}\cite[Definition 3.4]{JNPro22} Given a lattice polytope $P$, we define an equivalence relation $\sim_{P}$ on the set of its lattice points $P\cap \Z^{N}$ by
\[m\sim_{P} m' \iff \exists Q\prec P \mbox{ such that } m,m'\in Q^{\circ} \mbox{ and } m-m'\in (q-1)\Z^{N}\] where $Q^{\circ}$ is the interior of $Q$.
A \textbf{projective reduction} $\red{P}$ of $P$ is defined to be a set of representatives of elements of $P\cap \Z^{N}$ modulo $\sim_{P}.$
\end{definition}
 
 As suggested by \cite[Definition 4.3]{JNPro22}, we consider a particular projective reduction of $P_d$ coming from a monomial ordering. Throughout this paper, we will deal with the lex ordering with $x_2>x_1>x_0$: a monomial $x^{\a,d}=x_2^{a_2}x_1^{a_1}x_0^{d-aa_1-ba_2}$ of degree $d$ is smaller than a monomial $x^{\a',d'}=x_2^{a'_2}x_1^{a'_1}x_0^{d'-aa'_1-ba'_2}$  of degree $d'$ if and only if the left most non-zero number in $(a_2-a'_2,a_1-a'_1,a(a'_1-a_1)+b(a'_2-a_2))$ is negative. In this case, we write $x^{\a,d} <_{\text{lex}} x^{\a',d'}$.

 Note that the left most non-zero number in $(a_2-a'_2,a_1-a'_1,a(a'_1-a_1)+b(a'_2-a_2))$ being negative is equivalent to the left most non-zero number in $(a_2-a'_2,a_1-a'_1)$ being negative, \textit{i.e.} $(a_1,a_2) <_{\text{lex}} (a'_1,a'_2)$. 
 
We denote by $\red(d)$ the projective reduction with respect to the aforementioned ordering, see Figure \ref{subfig:redc}, \textit{i.e.}
\begin{align}\label{eq:def_red(d)}
\red(d)&=\set{\min_{\text{lex}}\set{m \in \theta} : \theta \in (P_d \cap \Z^2)/ \sim_{P_d}}\\\label{eq:def_red(d)_faces}
&=\bigcup_{Q\prec P} \set{\min_{\text{lex}}\set{m \in \theta} : \theta \in (Q^\circ \cap \Z^2)/ (q-1)\Z^2}.
\end{align}

\begin{remark}
    If $a=b=1$, $\red(d)$ identifies with the exponents of the projective reduction of monomials of degree $d$ defined in \cite[Definition 2.1]{BDG2019}.
\end{remark}

\begin{figure}[htb]
\begin{subfigure}[b]{0.32\textwidth}
	\centering
		\includegraphics[keepaspectratio]{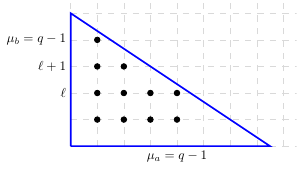}
		\caption{Reduction of $P^{\circ}$}\label{subfig:reda}
	\end{subfigure}
	\begin{subfigure}[b]{0.32\textwidth}
		\centering
		\includegraphics[keepaspectratio]{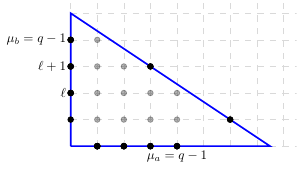}
		\caption{Reduction of edges}\label{subfig:redb}
	\end{subfigure}
	\begin{subfigure}[b]{0.32\textwidth}
	\centering
	\includegraphics[keepaspectratio]{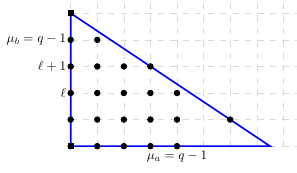}
	\caption{Total reduction of $P$}\label{subfig:redc}
\end{subfigure}
	\caption{Reduction of a polygon associated to $\P(1,2,3)$ for $q=5$ and $d=15$.}
\end{figure}

\begin{theorem}{\cite[Theorem 3.5]{JNPro22}}\label{theo:basis_code}
    A basis for the code $C_d$ is given by the images under the evaluation map $\ev_Y$ of the monomials in $\overline{\M}_{d}:=\{\x^{\a,d} : \a\in \red(d)\}$. Therefore $k=\dim_{\F_q} C_d=|\red(d)|$. 
\end{theorem}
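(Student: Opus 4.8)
The plan is to present $C_d$ as a quotient of $S_d$ and then extract a monomial basis of that quotient from the Gröbner basis of Theorem~\ref{thm:UniversalBasis}, matching the resulting standard monomials with the combinatorial set $\red(d)$.

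First I would set up the isomorphism $C_d\cong S_d/I(Y)_d$. Every $f\in S_d$ is homogeneous of weighted degree $d$, so $f(\lambda x_0,\lambda^a x_1,\lambda^b x_2)=\lambda^{d}f(x_0,x_1,x_2)$ and the condition $f(P)=0$ is independent of the chosen representative of $P\in Y$; hence $V_Y(f)$ is well defined and the restriction of $\ev_Y$ to $S_d$ has kernel exactly $I(Y)_d$. This gives $C_d\cong S_d/I(Y)_d$, so the images under $\ev_Y$ of any $\F_q$-basis of $S_d/I(Y)_d$ form a basis of $C_d$ and $k=\dim_{\F_q}\bigl(S_d/I(Y)_d\bigr)$.

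Next I would use Theorem~\ref{thm:UniversalBasis}: since $\{f_0,f_1,f_2\}$ is a Gröbner basis of $I(Y)$ for the lex order with $x_2>x_1>x_0$ (indeed a universal one), the standard monomials of degree $d$---the $\x^{\a,d}\in\M_d$ divisible by none of the leading monomials $\mathrm{lt}(f_0),\mathrm{lt}(f_1),\mathrm{lt}(f_2)$---descend to an $\F_q$-basis of $S_d/I(Y)_d$. It then remains to identify the set of standard exponents $\a$ with $\red(d)$, and this is the heart of the proof. Because $I(Y)$ is a pure binomial ideal generated by $\{f_0,f_1,f_2\}$, reduction carries a monomial to a monomial and only ever replaces a leading monomial by a strictly lex-smaller one; thus the standard monomial in a residue class is exactly its lex-minimum, which is precisely the representative selected in \eqref{eq:def_red(d)}. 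So it suffices to show that the algebraic equivalence $\x^{\a,d}\equiv\x^{\a',d}\pmod{I(Y)_d}$ coincides with the relation $\a\sim_{P_d}\a'$. To match the two equivalences I would reuse the description of homogeneous binomials of $I(Y)$ recalled in the proof of Theorem~\ref{thm:UniversalBasis}: such a binomial factors as $\x^{\gamma}(\x^{m^+}-\x^{m^-})$ with disjoint supports and $m^+-m^-\in(q-1)L_{\beta(\varepsilon)}$, where $\varepsilon=\supp(\x^{\gamma})$. Here $\varepsilon$ is precisely the set of variables common to $\x^{\a,d}$ and $\x^{\a',d}$, i.e. it records the smallest face $Q\prec P_d$ whose interior contains both $\a$ and $\a'$; and projecting $L_{\beta(\varepsilon)}$ onto the $(a_1,a_2)$-plane turns the lattice condition into $\a-\a'\in(q-1)\Z^2$ along that face. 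Concretely, for $\varepsilon=\{0,1,2\}$ (the interior $P_d^{\circ}$) the projection of $L_{\beta}=\ker\beta\cap\Z^3$ onto the last two coordinates is all of $\Z^2$, and $u_0-v_0=-a(a_1-a_1')-b(a_2-a_2')$ is forced; for the slanted edge $\varepsilon=\{1,2\}$ the relevant direction is $(b,-a)$, recovering $f_0$; and the remaining edges are handled symmetrically. This is exactly the defining condition of $\sim_{P_d}$, so the two equivalences agree and $\red(d)$ is the set of standard exponents.

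I expect the main obstacle to lie in this last matching, specifically in the bookkeeping that turns the support $\varepsilon$ into the correct face of $P_d$ and the lattice $(q-1)L_{\beta(\varepsilon)}$ into $(q-1)\Z^2$ along that face, uniformly over all faces. One must also dispose of the non-integral vertices $(d/a,0)$ and $(0,d/b)$ that arise when $a\nmid d$ or $b\nmid d$; as noted after Theorem~\ref{thm:UniversalBasis}, a vertex can only carry a single-variable monomial, whose coset in $S_d/I(Y)_d$ is a singleton, so these points contribute no nontrivial identifications and do not perturb the bijection established on the remaining faces.
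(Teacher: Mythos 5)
The paper does not prove this statement at all: it is imported verbatim from \cite[Theorem 3.5]{JNPro22}, so your self-contained argument is necessarily a different route. Your strategy is sound and amounts to specializing the cited general toric result to $\P(1,a,b)$: identify $C_d$ with $S_d/I(Y)_d$, use the Gr\"obner basis $\{f_0,f_1,f_2\}$ of Theorem \ref{thm:UniversalBasis} to extract the standard monomials of degree $d$, observe that reduction modulo a pure-difference binomial basis sends monomials to strictly lex-smaller monomials so that each congruence class of monomials contains exactly one standard monomial (its lex-minimum), and then match congruence classes with $\sim_{P_d}$-classes via the dictionary between supports of monomials and faces of $P_d$, together with the lattices $(q-1)L_{\beta(\varepsilon)}$. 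What this buys is independence from the external reference, at the cost of the face-by-face bookkeeping you rightly identify as the main labour; what the citation buys the authors is generality, since \cite{JNPro22} proves the statement for arbitrary projective reductions on more general toric varieties.

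One point you should make explicit. As written, your matching only invokes that every homogeneous binomial of $I(Y)$ factors as $\x^{\gamma}(\x^{m^+}-\x^{m^-})$ with $m^+-m^-\in(q-1)L_{\beta(\varepsilon)}$, which yields the implication ``$\x^{\a,d}\equiv\x^{\a',d}\pmod{I(Y)}$ implies $\a\sim_{P_d}\a'$''. That direction alone shows $\ev_Y(\overline{\M}_d)$ is linearly independent but not that it spans: if some $\sim_{P_d}$-class split into several congruence classes, the number of standard monomials, hence $\dim C_d$, would exceed $|\red(d)|$. You therefore also need the converse, namely that $\a\sim_{P_d}\a'$ forces $\x^{\a,d}-\x^{\a',d}\in I(Y)$. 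This is easy --- either quote \cite[Proposition 5.6]{MS2022} as a two-way characterization, or check directly that the two monomials take equal values at every point of $Y$, using that exponents differing by multiples of $q-1$ evaluate identically on $\F_q^*$ and that membership in the interior of a face controls which coordinates are allowed to vanish --- but it must be said for the argument to close.
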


We introduce the following notation that will be useful to compute the dimension of the code:
\begin{align}
\mu_{a}&=\min\set{\Floorfrac{d-1}{a},q-1}, &E_x(d)=\set{(x,0) : 0 \leq x \leq \mu_a}\\
\mu_{b}&=\min\set{\Floorfrac{d-1}{b},q-1}, & E_y(d)=\set{(0,y) : 0 \leq y \leq \mu_b}.
\end{align}

If $N_0=(x',y')$ is the first interior lattice point on the hypotenuse of the triangle $P_d$ (the one with the smallest positive $y$-coordinate and with $x'>0$), set
\begin{equation}\label{eq:def_t}
    t=\min\set{q-2,\Floorfrac{d-by'-1}{ab}}
\end{equation}
and define
\begin{equation}\label{eq:Eh}
E_h(d)=\set{(x'-ib,y'+ia)\in \N^2 :  0\leq i \leq t },
\end{equation}
and if there is no such point $N_0$, we let $E_h(d)=\emptyset$.



\begin{remark} \label{rem:cases}
The numbers $\alpha_2=\Floor{\frac{d-1-a(q-1)}{b}}$ and $\ell = \max\set{0,\min\set{q-1,\alpha_2}}$ will play an important role for the rest of the paper. The integer $\ell$ is the biggest integer $y\in [1,\mu_{b}]$ satisfying the inequality $a(q-1)+by \leq d-1$, \textit{i.e.} such that the lattice point $(q-1,y)$ lies in $P_d^\circ$. Notice that we have
\[
\left\{\begin{array}{ccll}
    \mu_a=\lfloor \frac{d-1}{a}\rfloor,&  \alpha_2<0& \text{ and }  \ell=0 & \text{if }  d\le a(q-1), \\
    \mu_a=q-1,& \alpha_2=0& \text{ and }  \ell=0 &   \text{if }  a(q-1)<d\le a(q-1)+b,\\
    \mu_a=q-1,& 1 \leq  \alpha_2\leq q-2& \text{ and }  \ell=\alpha_2  &  \text{if } 
   a(q-1)+b<d \leq (q-1)(a+b),\\
    \mu_a=q-1,& q-1 \leq \alpha_2& \text{ and }  \ell=q-1  &  \text{if } 
   (q-1)(a+b)<d.
\end{array}\right.
\]
 
\end{remark}

\begin{lemma}\label{lem:red} Let $\ell$ be as in Remark \ref{rem:cases}. Then, the set $\red(d)$ corresponding to the lex ordering with $x_2>x_1>x_0$ is the union $$R(d)\cup T(d) \cup H(d), \text{ where}$$
   \begin{align*}
 R(d)&=\set{(x,y) \in \Z^2: 0\leq x \leq \mu_a \text{ and } 0\leq y \leq \ell},\\
 T(d)&=\set{(x,y) \in \Z^2: 0\leq x \leq \Floor{\frac{d-1-by}{a}} \text{ and } \ell+1\leq y \leq \mu_b},\\
 H(d) & =E_h(d)\cup \left[\set{(0,d/b),(d/a,0)}\cap \Z^2\right].
\end{align*}  
\end{lemma}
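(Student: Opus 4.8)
The plan is to compute, face by face, the lex-minimal representatives prescribed by the face-wise description \eqref{eq:def_red(d)_faces} of $\red(d)$, and to check that the disjoint union of these contributions is $R(d)\cup T(d)\cup H(d)$. Throughout I use that, for the lex order with $x_2>x_1>x_0$, taking $\min_{\text{lex}}$ amounts to minimizing the $y$-coordinate first and then the $x$-coordinate, as recorded after the definition of the ordering. Since the relative interiors of the faces of $P_d$ partition $P_d\cap\Z^2$, the contributions are automatically disjoint and I only have to identify each one. The faces are the polygon interior $P_d^\circ$, the three edges (the segment $E_0$ on $\{y=0\}$, the segment $E_1$ on $\{x=0\}$, and the hypotenuse $E_2$ on $\{ax+by=d\}$), and the three vertices.

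The vertices and the two coordinate edges are immediate. The vertex $(0,0)$ is its own class and lies in $R(d)$, while $(d/a,0)$ and $(0,d/b)$ contribute exactly when integral, giving $\{(0,d/b),(d/a,0)\}\cap\Z^2\subseteq H(d)$. On $E_0^\circ$ every point is $(x,0)$ with $1\le x\le\Floorfrac{d-1}{a}$, and since reduction modulo $(q-1)\Z^2$ only affects the first coordinate, choosing the least $x$ in each residue class yields $\{(x,0):1\le x\le\mu_a\}$; symmetrically $E_1^\circ$ yields $\{(0,y):1\le y\le\mu_b\}$. Together with $(0,0)$ these account for the entire boundary $\{x=0\}\cup\{y=0\}$ of $R(d)\cup T(d)$.

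The heart of the argument is the interior $P_d^\circ$. A point $(x,y)$ is interior iff $x,y\ge 1$ and $ax+by\le d-1$, so for fixed $y$ the admissible $x$ fill $1\le x\le X_y$ with $X_y:=\Floorfrac{d-1-by}{a}$, a nonincreasing function of $y$. First I would show every interior class has a representative with level $\bar y\le q-1$: if $(x,y)$ is interior then, for the least positive $y'\equiv y\pmod{q-1}$, one has $X_{y'}\ge X_y\ge x$, so the least positive residues of $x$ and $y$ still give an interior point, and its $y$-coordinate cannot be lowered further; moreover existence of an interior point at level $\bar y$ forces $X_{\bar y}\ge 1$, hence $\bar y\le\mu_b$. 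The decisive point is the dichotomy governed by $\ell$: by the very definition of $\ell$ one has $(q-1,y)\in P_d^\circ$, i.e.\ $X_y\ge q-1$, precisely when $y\le\ell$. Thus for $1\le\bar y\le\ell$ (which forces $\mu_a=q-1$) the full range of residues is realized and the least representatives fill $\{(x,\bar y):1\le x\le\mu_a\}$, matching $R(d)$; while for $\ell+1\le\bar y\le\mu_b$ one has $X_{\bar y}\le q-2$, so the interval $1\le x\le X_{\bar y}$ contains no two points congruent modulo $q-1$, every such point is already least in its class, and they fill $\{(x,\bar y):1\le x\le X_{\bar y}\}$, matching the interior part of $T(d)$. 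Letting $\bar y$ run over $[1,\ell]$ and $[\ell+1,\mu_b]$ and adjoining the boundary strips gives exactly $R(d)\cup T(d)$.

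Finally, on $E_2^\circ$ I would list the lattice points as $(x'-ib,\,y'+ia)$, $i\ge 0$, starting from $N_0=(x',y')$; the largest $i$ with $x'-ib\ge 1$ is $i_{\max}=\Floorfrac{x'-1}{b}$, and using $ax'+by'=d$ one rewrites $i_{\max}=\Floorfrac{ax'-1}{ab}=\Floorfrac{d-by'-1}{ab}$. Since $\gcd(a,b)=1$, two such points are congruent modulo $(q-1)\Z^2$ iff $(q-1)\mid(i-j)a$ and $(q-1)\mid(i-j)b$, which (writing $1=ua+vb$) is equivalent to $i\equiv j\pmod{q-1}$; as $y'+ia$ is strictly increasing in $i$, the least representative of each class is the one of least index, so the representatives are those with $0\le i\le\min\{i_{\max},q-2\}=t$, that is, precisely $E_h(d)$. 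Assembling interior, edges and vertices yields $\red(d)=R(d)\cup T(d)\cup H(d)$, with no overcounting since the three sets satisfy $ax+by\le d-1$ on disjoint $y$-ranges and $ax+by=d$ respectively. The main obstacle is the interior dichotomy---verifying that the reduction never needs a level $\bar y>q-1$ and that the cut-off between the ``full $x$-range'' and the ``no wrap-around'' regimes is exactly $\ell$---together with the floor identity matching $i_{\max}$ with the quantity $t$ defining $E_h(d)$.
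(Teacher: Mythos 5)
Your proposal is correct and follows essentially the same route as the paper's proof: a face-by-face computation of lex-minimal representatives, with the interior handled row by row and split at the threshold $\ell$, and the hypotenuse parametrized as $N_i=(x'-ib,y'+ia)$ with equivalence governed by $q-1\mid i-j$ via coprimality of $a$ and $b$. Your explicit verification of the floor identity $\Floorfrac{x'-1}{b}=\Floorfrac{d-by'-1}{ab}$ is a slightly more detailed justification of $i_{\max}$ than the paper's interval argument, but the structure and all key steps coincide.
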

\begin{proof} In order to determine elements of $\red(d)$ as described in Equation \eqref{eq:def_red(d)_faces}, we run through the faces $Q\prec P_d$ and we choose the ``smallest" non--equivalent points $\a\in Q^{\circ}$ with respect to the aforementioned lex ordering.

\medskip

First of all, the vertex  $(0,0)$ lies in the set $R(d)$. The vertices $(0,\frac{d}{b})$ and $(\frac{d}{a},0)$ clearly belong to the set $H(d)$ when $b|d$ or $a|d$, respectively. 

\smallskip

Now let us deal with the edges of $P_d$. If $Q$ is the edge of $P_d$ on $x$-axis, then the lattice points in the interior $Q^\circ$ of $Q$ are of the form $(x,0)$ for $1\leq x \leq \Floorfrac{d-1}{a}$. By definition of the reduction, one can easily check that
\[Q^{\circ} \cap \red(d)=\set{(x,0) : 1 \leq x \leq \mu_a} = E_x(d) \subseteq R(d).\]

If $Q$ is the edge of $P_d$ on $y$-axis, then 
\[Q^{\circ} \cap \red(d)=\set{(0,y) : 1 \leq y \leq \mu_b} = E_y(d) \subseteq R(d) \cup T(d).\]

Let $Q$ be the edge of $P_d$ lying on the line $ax+by=d$. If $N_0=(x',y')$ is the smallest lattice point on $Q^{\circ}$ (the one with the smallest positive $y$-coordinate) then all the lattice points on $Q^{\circ}$ are of the form $N_i=(x'-ib,y'+ia)$ for some non-negative integer $i$. 

Note that the biggest lattice point on $Q^{\circ}$ (the one with the biggest $y$-coordinate and positive $x$-coordinate on the line $ax+by=d$) is $N_{i_{max}}=(x'-i_{max} b,y'+i_{max} a)$ where $i_{max}=\Floorfrac{d-by'-1}{ab}$. Indeed, by the definition of the floor part, the coordinates of $N_{i_{max}}$ satisfy
\begin{align*}
    x'-i_{max} b & \in \left[\frac{1}{a},\frac{1}{a} + b\right) ,\\
    y'+i_{max} a &\in \left(\frac{d-1}{b}-a,\frac{d-1}{b}\right].
\end{align*}%
Moreover, for any nonnegative integers $i$ and $j$, we have $N_i-N_{j}=(i-j)(-b,a)$. As $a$ and $b$ are coprime, the points $N_i$ and $N_j$ are equivalent if and only if $q-1$ divides $i-j$. Therefore, since $t=\min\set{q-2,i_{max}}$ (see Equation \eqref{eq:def_t}), the reduction $Q^{\circ} \cap \red(d)$ is exactly $E_h(d)$.

\smallskip

Finally, let $Q$ be $P_d$ itself. With the same reasoning as for the horizontal and vertical edges, we can deduce that the reduction of $Q^{\circ}$ lies in the square $[1,\mu_a] \times [1,\mu_b]$. More precisely, we have 
\[Q^{\circ} \cap \red(d) = P_d^{\circ} \cap \left( [1,\mu_a] \times [1,\mu_b]\right).\]

Let us make the intersection $P_d^\circ \cap \left([1,\mu_a] \times [1,\mu_b] \right)$ more explicit by looking at each horizontal line $y=y_0$ for $1\leq y_0 \leq \mu_b$.

On the line $y=y_0$, notice that $\Floorfrac{d-1-by_0}{a}$ is the $x$-coordinate of the right most lattice point of $P_d^{\circ}$. Then 
\begin{equation}\label{eq:proof_red_interior}
P_d^{\circ} \cap \red(d) = \bigcup_{1 \leq y \leq \mu_b} \set{(x,y) : 1 \leq x \leq \min\set{q-1,\Floorfrac{d-1-by}{a}}}.
\end{equation}

\begin{itemize}
    \item If $\ell = 0$, then $d-1 < a(q-1)+b$ by Remark \ref{rem:cases}. Then for every $y\geq 1$, we have $\lfloor \frac{d-1-by}{a}\rfloor<\Floorfrac{d-1-b}{a}\leq q-1$.  The set in Equation \eqref{eq:proof_red_interior} is simply
    \[P_d^{\circ} \cap \red(d) = \bigcup_{1 \leq y \leq \mu_b} \set{(x,y) : 1 \leq x \leq \Floorfrac{d-1-by}{a}} \subset T(d).\]

    In this case, the whole reduction $\red(d)$ is the union of 
    \begin{itemize}
        \item $R(d)=\set{(0,0)} \cup E_x(d)$ which contains the reductions of the vertex $(0,0)$ and of the horizontal edge,
        \item $T(d)=\{(x,y) : 0\leq x \leq \Floor{\frac{d-1-by}{a}} \text{ and } 1\leq y \leq \mu_b\}$, which consists of the reduction of the vertical edge $E_y(d)$ and of the interior of $P_d$, 
        \item $H(d)$, the lattice points among the last two vertices and the reduction of the interior of the edge lying on the line $ax+by=d$.
    \end{itemize}

    \item If $\ell \geq 1$, then by the virtue of Remark \ref{rem:cases}, we can rewrite  Equation \eqref{eq:proof_red_interior} as follows. 
\begin{align*}P_d^{\circ} \cap \red(d) =& \bigcup_{1 \leq y \leq \ell} \set{(x,y) : 1 \leq x \leq q-1=\mu_a} \\&\cup \bigcup_{\ell+1 \leq y \leq \mu_b} \set{(x,y) : 1 \leq x \leq \Floorfrac{d-1-by}{a}}.
\end{align*}
The first union lies in the rectangular area $R(d)$, whereas the second union lies in the trapeze area $T(d)$. 
\end{itemize}

In both cases, the description given via Equation \eqref{eq:def_red(d)_faces} agrees with the one given in the statement, which completes the proof. 
\end{proof}

\begin{example} \label{ex:P(1,2,3)_5} Let $q=5$, $X=\P(1,2,3)$ and $Y=X(\F_q)$. 

Firstly let $d=7$. We count the number of points of $\red(d)$ in Figure \ref{subfig:a}. Clearly, $\ell=0$ and $R(d)=\{(0,0),(1,0),(2,0),(3,0)\}$. So, $\size{R(d)}=4$. Similarly, we have $T(d)=\{(0,1),(1,1),(0,2)\}$. Thus, $|T(d)|=3.$ Also, since $H(d)=\{(2,1)\}$ we get $|H(d)|=1$. Therefore, $\dim_{\F_5}(C_{7,Y})=\size{\red(7)}=4+3+1=8.$

\begin{figure}[h]
\begin{subfigure}[b]{0.45\textwidth}
\centering
\includegraphics[keepaspectratio]{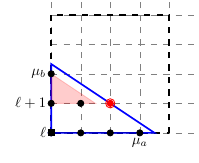}
\caption{$\red(7)=P_7 \cap \Z^2$ with $\ell=0$,
\\ \textit{$\mu_a=3$ and $\mu_b=2$.}} \label{subfig:a}
\end{subfigure}
\begin{subfigure}[b]{0.45\textwidth}
\centering
\includegraphics[keepaspectratio]{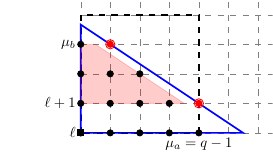}
\caption{$\red(11)$ with $\ell=0$, \\
$\mu_a=4$ and $\mu_b=3$}\label{subfig:b}
\end{subfigure}
\caption{The set $\red(d)$ for $d=7,11$ with $(q,a,b)=(5,2,3).$}
\end{figure}
Figure \ref{subfig:b} shows $|R(11)|=5$, $|T(11)|=8$, $|H(11)|=2$ and
$\dim_{\F_5}(C_{11,Y})=15.$

\begin{figure}[h]
\begin{subfigure}[b]{0.45\textwidth}
\centering
\includegraphics[keepaspectratio]{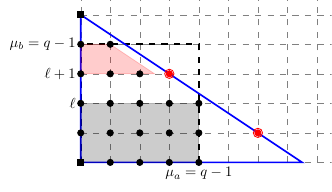}
\caption{$\mu_a=\mu_b=q-1$ and $T(15)\neq \varnothing$ }\label{subfig:c}
\end{subfigure}
\begin{subfigure}[b]{0.45\textwidth}
\centering
\includegraphics[keepaspectratio]{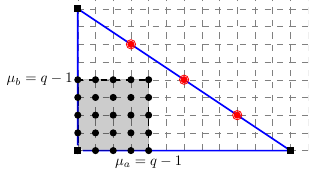}
\caption{$\ell=\mu_a=\mu_b=q-1$ and $T(24)= \varnothing$ }\label{subfig:d}
\end{subfigure}
\caption{The set $\red(d)$ for $d=15,24$ with $(q,a,b)=(5,2,3)$.
}
\label{fig:dimq5}
\end{figure}

Figure \ref{subfig:c} reveals similarly that
$\dim_{\F_5}(C_{15,Y})=15+5+3=23.$

Finally, Figure  \ref{subfig:d} proves that
$\dim_{\F_5}(C_{24,Y})=25+5=30.$

\end{example}

Example \ref{ex:P(1,2,3)_5} illustrates how the dimension of the code $C_d$ can be computed combinatorially by using Theorem \ref{theo:basis_code} and Lemma \ref{lem:red}.

In order to express the cardinality of the set $H(d)$, we appeal to a very classical function in the theory of numerical semigroups.
\begin{definition}\cite{A2005} 
Let $a,b$ and $d$ be positive integers. The \textbf{denumerant} function $\den(d;a,b)$ is defined as the number of non-negative integer representations of $d$ by $a$ and $b$, that is, the number of solutions $(m_a,m_b) \in \N^2$  of 
\[d=m_a a+m_b b.\] 
\end{definition}
The quantity $den(d;a,b)$ is positive if and only if $d$ belongs to the semigroup $\langle a,b \rangle_{\N}$ generated by $a$ and $b$ over $\N$. It is known that (see \cite[Chapter 4]{A2005} or \cite{SertozOzluk}) if $d=\lambda ab+s$ with $0\le s <ab$ then $den(d;a,b)=\lambda +den(s;a,b)$ with ${den(s;a,b) \in \{0,1\}}$. More precisely, we have
\begin{equation*}
\den(s;a,b)=\begin{cases}
0 \text{ or } 1 & \text{if } 0<s<ab\\
1 & \text{for all } ab-a-b<s<ab\\
0 & \text{if } s=ab-a-b.
\end{cases}
\end{equation*}

\begin{theorem}\label{theo:dimab} Let $Y=\P(1,a,b)(\F_q)$ for two relatively prime positive integers $a\leq b$. Let $d \geq 1$ and let $\ell$ be as in Remark \ref{rem:cases}. We write $\mathbf{1}_{a \mid d}$ (resp. $\mathbf{1}_{b \mid d}$) for the integer being $1$ when $a$ (resp. $b$) divides $d$, and $0$ otherwise.

Then, the dimension of the code $C_d$ is given by
\begin{equation}\label{eq:dimab} \dim_{\F_q}(C_d) = 
(\ell+1)\mu_a+\mu_b+1+\sum\limits_{y=\ell+1}^{\mu_{b}}\Floor{\frac{d-1-by}{a}}+ |H(d)|
\end{equation}
where
\begin{equation*}
|H(d)|=\begin{cases}\den(d;a,b) & \text{if } d \leq ab(q-1), \\
q-1+\mathbf{1}_{a \mid d} + \mathbf{1}_{b \mid d} & \text{if } d > ab(q-1).
\end{cases}
\end{equation*}
\end{theorem}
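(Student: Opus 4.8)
The plan is to count the lattice points in the projective reduction directly. By Theorem~\ref{theo:basis_code} we have $\dim_{\F_q}(C_d)=|\red(d)|$, and Lemma~\ref{lem:red} writes $\red(d)=R(d)\cup T(d)\cup H(d)$. Since, by Equation~\eqref{eq:def_red(d)_faces}, this decomposition is obtained by reducing the relative interiors of the distinct faces of $P_d$ --- $R(d)\cup T(d)$ collecting the origin, the two coordinate edges and the interior $P_d^{\circ}$, while $H(d)$ collects the interior of the hypotenuse together with the remaining two vertices --- the three sets are pairwise disjoint: $R(d)$ and $T(d)$ occupy disjoint ranges of ordinates ($0\le y\le\ell$ versus $\ell+1\le y\le\mu_b$), and a point of $H(d)$ lies either on the line $ax+by=d$ (where the defining bound $x\le\Floor{(d-1-by)/a}$ of $T(d)$, and the inequality defining $\ell$, both fail) or at a vertex with coordinate $d/a$ or $d/b$ exceeding $\mu_a,\mu_b$. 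Hence $\dim_{\F_q}(C_d)=|R(d)|+|T(d)|+|H(d)|$, and it remains to evaluate each term.

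First I would count $R(d)$ and $T(d)$, which is pure bookkeeping. The set $R(d)$ is the integer rectangle $[0,\mu_a]\times[0,\ell]$, so $|R(d)|=(\mu_a+1)(\ell+1)$. For $T(d)$, each admissible ordinate $y\in\{\ell+1,\dots,\mu_b\}$ (there are $\mu_b-\ell$ of them, and $by\le d-1$ keeps the $x$-range nonempty) contributes $\Floor{(d-1-by)/a}+1$ points, whence $|T(d)|=(\mu_b-\ell)+\sum_{y=\ell+1}^{\mu_b}\Floor{(d-1-by)/a}$. Adding these and expanding $(\mu_a+1)(\ell+1)=\mu_a\ell+\mu_a+\ell+1$ gives $|R(d)|+|T(d)|=(\ell+1)\mu_a+\mu_b+1+\sum_{y=\ell+1}^{\mu_b}\Floor{(d-1-by)/a}$, which is exactly the closed expression in~\eqref{eq:dimab} preceding $|H(d)|$.

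The substantive step is the evaluation of $|H(d)|$. Writing $H(d)=E_h(d)\cup(\{(0,d/b),(d/a,0)\}\cap\Z^2)$, the two disjoint vertex contributions account for $\mathbf{1}_{a\mid d}+\mathbf{1}_{b\mid d}$, so $|H(d)|=|E_h(d)|+\mathbf{1}_{a\mid d}+\mathbf{1}_{b\mid d}$. By the description of $E_h(d)$ and the equivalence $N_i\sim N_j\iff(q-1)\mid(i-j)$ established in the proof of Lemma~\ref{lem:red}, we get $|E_h(d)|=t+1=\min\{i_{max}+1,\,q-1\}$, where $i_{max}+1$ is the number of interior lattice points of the hypotenuse. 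I would then identify this count with the denumerant: an interior point is a solution $(x,y)\in\N^2$ of $ax+by=d$ with $x,y\ge 1$, and discarding the at most two axis solutions yields $i_{max}+1=\den(d;a,b)-\mathbf{1}_{a\mid d}-\mathbf{1}_{b\mid d}$. Consequently, when $i_{max}+1\le q-1$ one finds $|H(d)|=\den(d;a,b)$, and when $i_{max}+1\ge q-1$ one finds $|H(d)|=(q-1)+\mathbf{1}_{a\mid d}+\mathbf{1}_{b\mid d}$ (the two formulas agreeing at the boundary $i_{max}+1=q-1$).

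The one point requiring genuine care --- and the main obstacle --- is to show that the dichotomy $i_{max}+1\lessgtr q-1$ matches the stated threshold $d\lessgtr ab(q-1)$. Since $i_{max}=\Floor{(d-by'-1)/(ab)}$ depends on the ordinate $y'$ of the first interior point $N_0$, the crude inequality only localises the transition to a window of width about $ab$. To pin it down I would use that $y'$ is the least positive solution of $by'\equiv d\pmod a$, so $1\le y'\le a$ and hence $b\le by'\le ab$. The lower bound $by'\ge 1$ gives, for $d\le ab(q-1)$, that $d-by'-1<ab(q-1)$, so $i_{max}\le q-2$, placing us in the first case. The upper bound $by'\le ab$ gives, for $d\ge ab(q-1)+1$, that $d-by'-1\ge ab(q-1)+1-ab-1=ab(q-2)$, so $i_{max}\ge q-2$, placing us in the second case. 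Finally, the degenerate situation where $P_d$ has no interior point $N_0$ (so $E_h(d)=\emptyset$) falls under $d\le ab(q-1)$, and there $\den(d;a,b)=\mathbf{1}_{a\mid d}+\mathbf{1}_{b\mid d}$, so $|H(d)|=\den(d;a,b)$ still holds. Combining the two contributions completes the proof.
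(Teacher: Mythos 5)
Your proof is correct and follows essentially the same route as the paper's: the same decomposition $\red(d)=R(d)\cup T(d)\cup H(d)$, the same counts for $R(d)$ and $T(d)$, and the same identities $|E_h(d)|=\min\set{q-1,i_{max}+1}$ and $i_{max}+1=\den(d;a,b)-\mathbf{1}_{a\mid d}-\mathbf{1}_{b\mid d}$. The only (valid) variation is in verifying the threshold $d\lessgtr ab(q-1)$: you bound $i_{max}$ directly via $1\le y'\le a$, whereas the paper writes $d=\lambda ab+s$ and analyzes $\den(s;a,b)$ case by case.
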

\begin{proof}
 By Theorem \ref{theo:basis_code}, the dimension of $C_d$ is equal to the number of lattice points inside $ \red(d)$. By Lemma \ref{lem:red}, we have 
 \[\red(d)=R(d)\cup T(d) \cup H(d).\]
 Clearly, the cardinality of the set $R(d)$ of lattice points of the rectangular area is
 \begin{equation}\label{eq:R(d)}
     \size{R(d)}=(\ell +1)(\mu_a+1)=(\ell +1)\mu_a + \ell + 1.
 \end{equation}
 As the definition of $\ell$ ensures that $\mu_b \geq \ell$, the set $T(d)$ is empty when $\mu_b= \ell$ so the sum in Equation \eqref{eq:dimab} means $0$. Furthermore, when $\mu_b \geq \ell+1$, $T(d)$ is the set of lattice points of a trapezoidal region with cardinality
 \begin{equation}\label{eq:T(d)}
\size{T(d)}=\sum\limits_{y=\ell+1}^{\mu_{b}}\left(\Floor{\frac{d-1-by}{a}}+1\right)=\mu_b-\ell+\sum\limits_{y=\ell+1}^{\mu_{b}}\Floor{\frac{d-1-by}{a}}.
 \end{equation}

It remains to deal with the cardinality of $H(d)=E_h(d)\cup [\{(0,\frac{d}{b}),(\frac{d}{a},0)\}\cap \Z^2]$, that is the set of lattice points of $\red(d)$ lying on the hypotenuse of the triangle $P_d$. It is clear that $\den(d;a,b)$ is the number of lattice points on the hypotenuse of the triangle $P_d$, which are equivalent, containing the corners if they have integral coordinates. As $|E_h(d)|=\min\set{q-1,i_{max}+1}$
, we have the following formula
\begin{equation}\label{eq:imax}
    i_{max}+1=\den(d,a,b) -\epsilon \text{ where }  \epsilon=\mathbf{1}_{a \mid d} + \mathbf{1}_{b \mid d}
\end{equation}
and $ \size{H(d)}$ can be reformulated as follows
\begin{equation}\label{eq:H(d)-rephrase}
\size{H(d)}=\min\set{q-1+\epsilon,\den(d;a,b)}.
\end{equation}

Recall that if $d=\lambda ab+s$ with $0\le s <ab$ then $\den(d;a,b)=\lambda +\den(s;a,b)$ with ${\den(s;a,b) \in \{0,1\}}$ depending on whether $s\in \la a,b \ra_\N$ or not.

\begin{itemize}
    \item When $d<ab(q-1)$, we have $\lambda <q-1$ and $\den(d;a,b)\leq q-1$. As $\epsilon \geq 0$, we clearly have $|H(d)|=\den(d;a,b)$.
    \item If $d=(q-1)ab$, then both $a|d$ and $b|d$ so $\epsilon=2$. Besides
    \[\den(d;a,b)= q-1 +\den(0;a,b)=q \leq q-1 + \epsilon.\]
    Therefore, $|H(d)|=\den(d;a,b)$.
    \item Now assume that $d> ab(q-1)$. Then $\den(d;a,b)\geq q-1 +\den(s;a,b)$.
    \begin{itemize}
        \item If $\epsilon=0$, Equation \eqref{eq:H(d)-rephrase} gives $\size{H(d)}=q-1$.
        \item If $\epsilon=1$, then either $a$ or $b$ divides $s$, which implies that $\den(s;a,b)=1$. So $\den(d;a,b) \geq q = q-1 + \epsilon$, hence $\size{H(d)}=q$.
        \item If $\epsilon=2$, then both $a$ or $b$ divides $d$, which means that $d=\lambda ab$ for some integer $\lambda \geq q$. Then, we have 
        \[\den(d;a,b)= \lambda +\den(0;a,b)=\lambda +1\geq q- 1 +\epsilon,\]
        and $|H(d)|=q+1$.
    \end{itemize}
\end{itemize}
The quantities in Equations \eqref{eq:R(d)}, \eqref{eq:T(d)} and \eqref{eq:H(d)-rephrase} add up to the number in the stated formula in Equation \eqref{eq:dimab}, completing the proof.
\end{proof}

\begin{corollary}\label{cor:11b} Let $Y=\P(1,1,b)(\F_q)$ for a positive integer $b$ and $d \geq 1$. Then, we have $\ell = \max\set{0,\min\set{q-1,\Floor{\frac{d-q}{b}}}}$ and $\dim_{\F_q}(C_d)$ is given by
\begin{equation*}\label{eq:dim1b}   
(\ell+1)(\mu_a+1)+ (\mu_b-\ell)d- b\binom{\mu_b+1}{2}+b\binom{\ell+1}{2}+ \mu_b+1+ \mathbf{1}_{b \mid d}.
\end{equation*}

In addition, we have $\ell=0$ and $\mu_a=d-1$ if $d\le q$. 
\end{corollary}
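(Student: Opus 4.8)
The plan is to specialize Theorem~\ref{theo:dimab} to the case $a=1$ and simplify. Setting $a=1$, we have $\mu_a=\min\set{\Floor{d-1},q-1}=\min\set{d-1,q-1}$, and the quantity $\alpha_2=\Floor{\frac{d-1-(q-1)}{b}}=\Floor{\frac{d-q}{b}}$, so that $\ell=\max\set{0,\min\set{q-1,\alpha_2}}$ reduces exactly to the stated expression $\max\set{0,\min\set{q-1,\Floor{\frac{d-q}{b}}}}$. This gives the formula for $\ell$ immediately.

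For the dimension, first I would evaluate the sum $\sum_{y=\ell+1}^{\mu_b}\Floor{\frac{d-1-by}{a}}$. The key simplification is that with $a=1$ the floor disappears: since $d-1-by$ is already an integer, $\Floor{d-1-by}=d-1-by$. Thus the sum becomes $\sum_{y=\ell+1}^{\mu_b}(d-1-by)$, a straightforward arithmetic sum. Splitting it as $(\mu_b-\ell)(d-1)-b\sum_{y=\ell+1}^{\mu_b}y$ and using $\sum_{y=\ell+1}^{\mu_b}y=\binom{\mu_b+1}{2}-\binom{\ell+1}{2}$ yields the terms $(\mu_b-\ell)(d-1)-b\binom{\mu_b+1}{2}+b\binom{\ell+1}{2}$. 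Combining the constant $-(\mu_b-\ell)$ from the $(d-1)$ factor with the standalone $+\mu_b+1$ term from Equation~\eqref{eq:dimab} accounts for the shift between $(\mu_b-\ell)(d-1)$ and $(\mu_b-\ell)d$, leaving the stated $(\mu_b-\ell)d$ plus the appropriate constants.

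Next I would handle $|H(d)|$. The crucial observation is that for $a=1$ the hypotenuse lies on $x+by=d$, and since $1$ divides $d$ trivially we always have $\mathbf{1}_{a\mid d}=1$. The denumerant $\den(d;1,b)$ counts pairs $(m_1,m_b)$ with $m_1+bm_b=d$, $m_1,m_b\ge 0$; for each $0\le m_b\le\Floor{d/b}$ the value $m_1=d-bm_b\ge 0$ is forced, so $\den(d;1,b)=\Floor{d/b}+1$. I would then check that in both regimes of Theorem~\ref{theo:dimab} the contribution of $|H(d)|$, once the term $\mathbf{1}_{a\mid d}=1$ is absorbed into the reduction count $R(d)$ (which for $a=1$ accounts for the full vertical edge together with the corner), collapses to the single correction $\mathbf{1}_{b\mid d}$ appearing in the final formula. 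The main bookkeeping obstacle is precisely this reconciliation: verifying that $|H(d)|$ from Equation~\eqref{eq:H(d)-rephrase}, when added to $|R(d)|=(\ell+1)(\mu_a+1)$ and $|T(d)|$, produces exactly the claimed expression without double-counting the lattice points on the hypotenuse whose $x$-coordinate is an integer (which is automatic when $a=1$). I expect the delicate point to be tracking how the denumerant term interacts with the already-counted interior and edge points, so I would verify it by confirming the two expressions agree on the boundary case $d=ab(q-1)=b(q-1)$ and in each of the four regimes of Remark~\ref{rem:cases}.

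Finally, for the last assertion, if $d\le q$ then $d-1\le q-1$ forces $\mu_a=\min\set{d-1,q-1}=d-1$, and $\Floor{\frac{d-q}{b}}\le 0$ gives $\ell=\max\set{0,\min\set{q-1,\Floor{\frac{d-q}{b}}}}=0$, as claimed.
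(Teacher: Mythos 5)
Your reduction of Theorem \ref{theo:dimab} to the case $a=1$ is the same route the paper takes, and the first half is correct: the formula for $\ell$, the evaluation of $\sum_{y=\ell+1}^{\mu_b}(d-1-by)$, the observation that the constant $-(\mu_b-\ell)$ combines with the $+\mu_b+1$ of Equation \eqref{eq:dimab} to turn $(\ell+1)\mu_a$ into $(\ell+1)(\mu_a+1)$, and the final assertion for $d\le q$ are all fine. The gap is in your treatment of $|H(d)|$. After the rearrangement you describe, the dimension equals $(\ell+1)(\mu_a+1)+(\mu_b-\ell)d-b\binom{\mu_b+1}{2}+b\binom{\ell+1}{2}+|H(d)|$, so matching the stated formula requires precisely $|H(d)|=\mu_b+1+\mathbf{1}_{b\mid d}$. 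You instead assert that the contribution of $|H(d)|$ ``collapses to the single correction $\mathbf{1}_{b\mid d}$'' once $\mathbf{1}_{a\mid d}=1$ is ``absorbed into $R(d)$''; that would leave the formula short by $\mu_b+1$. Moreover the absorption claim is false: $R(d)=[0,\mu_a]\times[0,\ell]$ contains neither the corner $(d,0)$ (whose $x$-coordinate exceeds $\mu_a\le d-1$) nor the portion of the vertical edge with $\ell< y\le \mu_b$ (that lives in $T(d)$), and no point of $H(d)$ is double-counted in $R(d)\cup T(d)$, so there is nothing to absorb.

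The missing step is easy to supply, and your denumerant computation already contains the ingredients. When $d\le b(q-1)$ one has $\mu_b=\Floorfrac{d-1}{b}$, hence $|H(d)|=\den(d;1,b)=\Floorfrac{d}{b}+1=\Floorfrac{d-1}{b}+\mathbf{1}_{b\mid d}+1=\mu_b+1+\mathbf{1}_{b\mid d}$; when $d>b(q-1)$ one has $\mu_b=q-1$ and $|H(d)|=q-1+\mathbf{1}_{a\mid d}+\mathbf{1}_{b\mid d}=q+\mathbf{1}_{b\mid d}=\mu_b+1+\mathbf{1}_{b\mid d}$. The paper obtains the same identity more directly, bypassing the case split of Theorem \ref{theo:dimab}: for $a=1$ the interior lattice points of the hypotenuse appearing in the reduction are exactly $(d-by,y)$ for $1\le y\le\mu_b$, so $|E_h(d)|=\mu_b$, and the two corners contribute $1+\mathbf{1}_{b\mid d}$ since $(d,0)$ is always integral. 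Either computation closes the gap; as written, your proposal defers it to a verification you do not carry out and predicts the wrong outcome for it.
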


\begin{proof}
If $a=1$, then $\mu_a=\min\{q-1,d-1\}$ and Equation \eqref{eq:dimab} becomes
 \[\dim_{\F_q}(C_d) = 
(\ell+1)\mu_a+\mu_b+1+\sum\limits_{y=\ell+1}^{\mu_{b}}\left(d-1-by\right) + |H(d)|.\]

Rearranging this sum we get the following formula
\begin{align*}
 \dim_{\F_q}(C_d)&= (\ell+1)\mu_a+\mu_b+1+(\mu_b-\ell)(d-1)-b\sum\limits_{y=\ell+1}^{\mu_{b}} y+|H(d)|\\
 &=(\ell+1)(\mu_a+1)+(\mu_b-\ell)d-b\binom{\mu_b+1}{2}+b\binom{\ell+1}{2}+ |H(d)|.
\end{align*} 
Since $E_h(d)=\emptyset$ if $d\leq b$ and $N_0=(x',y')=(d-b,1)$ is the first interior lattice point if $d> b$, it follows from Equation \eqref{eq:Eh} that we have $$E_h(d)=|\{(d-by,y): 1\le y \le \mu_b\}|=\mu_b.$$ Therefore, we get $|H(d)|=\size{E_h(d)}+1+ \mathbf{1}_{b \mid d}=\mu_b+1+ \mathbf{1}_{b \mid d}$ as desired.
\end{proof}

\begin{remark}
The formula for the dimension agrees with \cite[Theorem 6.4]{GL2023} for the case $a=b=1$.
\end{remark}

\section{The Regularity Set}\label{sec:reg}

In this section, we describe the so--called regularity set of the $\F_q$-rational points $Y=\P(1,a,b)(\F_q)$ of the weighted projective plane $\P(1,a,b)$. The importance of this set stems from the fact that its elements give rise to trivial codes as we explain shortly. Recall that the Hilbert function of a \textit{zero dimensional} subvariety $Y$ of the weighted projective plane $\P(1,a,b)$ is defined to be the Hilbert function of the ring $S/I(Y)$:
\[
H_Y(d):=\dim_{\K} S_d - \dim_{\K} I_d(Y)=\dim_{\F_q} S_d - \dim_{\F_q} I_d(Y),
\]
where $I_d(Y):=I(Y)\cap S_d$ is the degree--$d$ part of the vanishing ideal $I(Y)$ generated by the homogeneous polynomials vanishing on $Y$. If $Y=\P(1,a,b)(\F_q)$, the kernel of the evaluation map in $\eqref{evmap}$ is $I_d(Y)$ and hence $\dim_{\F_q} C_d=H_Y(d)\le |Y|$.

\begin{definition}
The regularity set of $Y=\P(1,a,b)(\F_q)$ is defined by \[\reg (Y)=\{ d\in \N W : H_{Y}(d)=|Y|\}.\]
\end{definition}

\begin{remark}\label{rem:trivialcodes}
If $d\in \reg(Y)$ then the dimension of the code is maximum possible, i.e. $C_d=\F_q^n$, where $n=|Y|$. Thus, it is a trivial code with parameters $[n,n,1]$.
\end{remark}

\begin{proposition}\label{prop:reg}
Let $a$ and $b$ be two relatively prime integers with $1\leq a\leq b$. An integer $d$ belongs to the regularity set of $Y=\P(1,a,b)(\F_q)$ if and only if there exists $d_0 \geq q$ such that $d=d_0ab$ and $(a+b)(q-1) < d$.
\end{proposition}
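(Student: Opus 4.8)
The plan is to reduce the statement to a lattice-point count and then identify exactly when the count is maximal. By Theorem~\ref{theo:basis_code} we have $H_Y(d) = \dim_{\F_q} C_d = |\red(d)|$, and $|Y| = q^2+q+1$, so $d \in \reg(Y)$ if and only if $|\red(d)| = q^2+q+1$. The face-by-face analysis in the proof of Lemma~\ref{lem:red} exhibits $\red(d)$ as a disjoint union of six pieces: the origin $(0,0)$; the interior reductions of the horizontal and of the vertical edge, of sizes $\mu_a$ and $\mu_b$ respectively; the hypotenuse reduction $E_h(d)$; the far vertices $\{(d/a,0),(0,d/b)\}\cap\Z^2$, numbering $\epsilon = \mathbf{1}_{a\mid d}+\mathbf{1}_{b\mid d}$; and the interior reduction $P_d^\circ \cap \red(d)$, which is contained in $[1,q-1]^2$. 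First I would record that these pieces are genuinely pairwise disjoint, so that $|\red(d)|$ is literally the sum of their cardinalities; this uses that interior lattice points satisfy $ax+by<d$ whereas the hypotenuse satisfies $ax+by=d$, and that the far vertices, when integral, have coordinates exceeding $\mu_a$ or $\mu_b$.

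Next I would bound each piece by its maximum: $\mu_a \le q-1$, $\mu_b \le q-1$, $|P_d^\circ\cap\red(d)| \le (q-1)^2$, $|E_h(d)| = \min\{q-1,i_{max}+1\} \le q-1$, $\epsilon \le 2$, and the origin contributes $1$. The crucial arithmetic fact is that these upper bounds sum to exactly $(q-1)^2 + 3(q-1) + 3 = q^2+q+1 = |Y|$. Since $|\red(d)|$ is the sum of the six nonnegative contributions, this re-proves $H_Y(d) \le |Y|$ and, more importantly, shows that equality $|\red(d)| = |Y|$ holds if and only if every one of the six pieces simultaneously attains its maximal size.

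It then remains to translate these six maximality conditions into arithmetic constraints on $d$. The interior reduction is full, $|P_d^\circ\cap\red(d)| = (q-1)^2$, exactly when the corner $(q-1,q-1)$ lies in $P_d^\circ$, i.e. when $(a+b)(q-1) < d$; as this forces $\mu_a=\mu_b=q-1$, it subsumes the two edge conditions. The far-vertex count is maximal, $\epsilon = 2$, exactly when $a\mid d$ and $b\mid d$, which by $\gcd(a,b)=1$ means $ab\mid d$; write $d = d_0 ab$. Finally, under $ab \mid d$ I would evaluate $|E_h(d)|$ using Equation~\eqref{eq:imax}, namely $i_{max}+1 = \den(d;a,b)-\epsilon$, together with the denumerant identity $\den(d_0 ab;a,b) = d_0+1$ recalled before Theorem~\ref{theo:dimab}; this gives $i_{max}+1 = d_0-1$, so $|E_h(d)| = \min\{q-1,d_0-1\}$ equals $q-1$ precisely when $d_0 \ge q$.

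Assembling the three surviving constraints, $|\red(d)| = q^2+q+1$ holds if and only if $ab\mid d$ with quotient $d_0 = d/(ab) \ge q$ and $(a+b)(q-1) < d$, which is exactly the claimed characterization. None of the three conditions implies another: for $\P(1,2,3)$ over $\F_5$ the value $d=24$ satisfies $ab\mid d$ and $(a+b)(q-1)<d$ but fails $d_0 \ge q$, consistent with $\dim C_{24}=30<31$ in Example~\ref{ex:P(1,2,3)_5}. I expect the main obstacle to be the bookkeeping that certifies the six pieces partition $\red(d)$ and that the sum of their maximal sizes is exactly $|Y|$; once this identity is in place the equivalence falls out by forcing equality term by term, and the only nontrivial remaining computation—the exact value of $|E_h(d)|$ when $ab\mid d$—is supplied directly by the denumerant formula.
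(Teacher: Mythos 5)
Your proposal is correct and follows essentially the same route as the paper: both decompose $\red(d)$ face by face via Lemma~\ref{lem:red}, observe that the maximal sizes of the pieces sum to exactly $q^2+q+1=(q-1)^2+3(q-1)+3$ so that equality forces each piece to be full, and then translate fullness of the interior, the divisibility of $d$ by $ab$, and $|E_h(d)|=q-1$ into the three stated conditions using Equation~\eqref{eq:imax} and $\den(d_0ab;a,b)=d_0+1$. Your write-up is merely more explicit about the disjointness of the pieces and the term-by-term equality argument, which the paper leaves implicit.
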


\begin{proof}
    It follows from the description of $\red(d)$ given by Lemma \ref{lem:red} that $d\in \reg(Y)$ if and only if $|\red(d)|=|Y|=q^2+q+1=(q-1)^2+3(q-1)+3$ if and only if $|P_d^{\circ}|=(q-1)^2$, $|E_x(d)|=|E_y(d)|=|E_h(d)|=q-1$,  $a \mid d$ and $b\mid d$.

    One can easily check that $|P_d^{\circ}|=(q-1)^2$ if and only if the point $(q-1,q-1)$ lies in the interior of $P_d$, which is equivalent to $(a+b)(q-1)<d$. In this case, $(q-1,0) \in E_x(d)$ and $(0,q-1)\in E_y(d)$, so $|E_x(d)|=|E_y(d)|=q-1$.

    Since the integers $a$ and $b$ are coprime, $a \mid d$ and $b\mid d$ if and only if $ab \mid d$ which is equivalent to the fact that there exists an integer $d_0$ such that $d=d_0ab$. 
    Then $\den(d;a,b)= d_0+1$. By Equation \ref{eq:imax},
    \[|E_h(d)|=\min\set{q-1,\den(d;a,b)-2} =q-1 \: \Leftrightarrow \: d_0 \geq q, \]
    which concludes the proof.
\end{proof}

\begin{theorem}\label{theo:reg1ab}
    Let $a$ and $b$ be two relatively prime positive integers with $1<a<b$. The regularity set of $Y=\P(1,a,b)(\F_q)$ is given as follows. 
\[\reg(Y)=\{d \in \N: d=d_0ab \mbox{ with } d_0\ge q\}=qab+\N ab.\]
\end{theorem}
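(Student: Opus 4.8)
The plan is to derive Theorem~\ref{theo:reg1ab} as a clean restatement of Proposition~\ref{prop:reg} under the extra hypothesis $1<a<b$, so the work is entirely in simplifying the two conditions ``$d=d_0ab$ with $d_0\geq q$'' and ``$(a+b)(q-1)<d$'' into the single arithmetic-progression description. First I would invoke Proposition~\ref{prop:reg} directly: $d\in\reg(Y)$ if and only if $d=d_0ab$ for some integer $d_0\geq q$ \emph{and} $(a+b)(q-1)<d$. The entire content of the theorem is then the claim that, when $1<a<b$, the second inequality is automatically implied by the first, so that it may be dropped from the characterization.

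The key step is therefore to show that $d_0\geq q$ together with $d=d_0ab$ forces $(a+b)(q-1)<d$. I would argue as follows: since $d_0\geq q$, we have $d=d_0ab\geq qab$, so it suffices to prove $(a+b)(q-1)<qab$, or equivalently $(a+b)(q-1)\leq qab-1$. Rearranging, it is enough to check $qab-(a+b)(q-1)>0$. I expect the cleanest route is to write this quantity as $q(ab-a-b)+(a+b)$ and observe that under $1<a<b$ (so in particular $a\geq 2$, $b\geq 3$) we have $ab-a-b=(a-1)(b-1)-1\geq (2-1)(3-1)-1=1>0$, whence $q(ab-a-b)+(a+b)\geq q+ a+b>0$. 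Thus the inequality holds strictly, establishing the implication. This is the one place where the hypothesis $1<a<b$ (as opposed to merely $1\leq a\leq b$) is essential: it guarantees $ab>a+b$, i.e.\ that the Frobenius-type quantity $ab-a-b$ is nonnegative, which fails for $a=1$.

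Having shown that the inequality is redundant, the characterization collapses to $d\in\reg(Y)\iff d=d_0ab$ with $d_0\geq q$, which is exactly the first displayed set $\{d\in\N : d=d_0ab \text{ with } d_0\geq q\}$. The final identification with $qab+\N ab$ is then purely formal: writing $d_0=q+j$ for $j\geq 0$ gives $d=(q+j)ab=qab+jab$, and as $j$ ranges over $\N$ the values $jab$ range over $\N ab$, so the two sets coincide. I would close by assembling these observations into a short paragraph citing Proposition~\ref{prop:reg} for the equivalence and the inequality computation for the redundancy.

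The main obstacle is genuinely minor here---this theorem is essentially a corollary of the proposition---so the only real care needed is in the inequality $qab>(a+b)(q-1)$ and in being explicit that $a\geq 2,b\geq 3$ makes $ab-a-b\geq 1$; one should double-check the boundary behavior so as not to accidentally claim the result for $a=1$, where $(a+b)(q-1)=(1+b)(q-1)$ can exceed $d_0 b$ for $d_0$ near $q$ and the inequality is no longer automatic.
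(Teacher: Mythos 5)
Your proposal is correct and follows essentially the same route as the paper: both reduce the theorem to Proposition~\ref{prop:reg} and then verify that $d=d_0ab$ with $d_0\geq q$ automatically forces $(a+b)(q-1)<d$ when $a\geq 2$ and $b>a$. The only difference is cosmetic—the paper checks $ab\geq 2b>a+b$ directly, while you verify the equivalent fact $ab-a-b=(a-1)(b-1)-1\geq 1$.
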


\begin{proof}
    By Proposition \ref{prop:reg}, it is enough to check that
    writing $d=d_0ab$ with $d_0 \geq q$ implies that $(a+b)(q-1) < d$.
        
    Since $a\ge 2$ and $b>a$ we have $ab\ge 2b >a+b$, leading to the following 
    \[d=d_{0}ab\ge qab \ge q(a+b)>(q-1)(a+b)\]
completing the proof.
\end{proof}

We also recover \cite[Corollary 3.9]{CS2024} more directly in the case $a=1$.

\begin{theorem}\label{theo:reg11b}
    Let $b$ be a positive integer. The regularity set of $Y=\P(1,1,b)(\F_q)$ is given as follows. 
\[\reg(Y)=\{d \in \N: d=d_0b \mbox{ with } d_0\ge q+\lfloor (q-1)/b\rfloor\}=\left(q+\Floorfrac{q-1}{b}\right)b+\N b.\]
\end{theorem}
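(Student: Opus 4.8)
The plan is to derive Theorem~\ref{theo:reg11b} directly from Proposition~\ref{prop:reg} by specializing $a=1$ and repackaging the two defining conditions into a single bound on $d_0$. Setting $a=1$ in Proposition~\ref{prop:reg}, an integer $d$ lies in $\reg(Y)$ if and only if there is some $d_0 \geq q$ with $d = d_0 b$ and $(b+1)(q-1) < d$. So the whole task is to understand, among the multiples $d = d_0 b$ of $b$, for which $d_0$ both $d_0 \geq q$ and the strict inequality $(b+1)(q-1) < d_0 b$ hold simultaneously.

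First I would rewrite the inequality. Dividing $(b+1)(q-1) < d_0 b$ by $b$ gives $d_0 > (q-1) + \frac{q-1}{b}$. Since the smallest integer strictly exceeding a real number $x$ is $\lfloor x \rfloor + 1$, and since $q-1$ is an integer so that $\lfloor (q-1) + \frac{q-1}{b}\rfloor = (q-1) + \lfloor \frac{q-1}{b}\rfloor$, the inequality on the integer $d_0$ is equivalent to
\[
d_0 \geq q + \Floorfrac{q-1}{b}.
\]

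Next I would check that this bound already absorbs the auxiliary condition $d_0 \geq q$. Indeed $\Floorfrac{q-1}{b} \geq 0$, hence $q + \Floorfrac{q-1}{b} \geq q$, so any $d_0$ satisfying the repackaged bound automatically satisfies $d_0 \geq q$. Therefore $d \in \reg(Y)$ if and only if $d = d_0 b$ with $d_0 \geq q + \Floorfrac{q-1}{b}$, which is exactly the set $\left(q + \Floorfrac{q-1}{b}\right) b + \N b$, completing the argument.

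The computation is entirely routine; the only point demanding care is the floor manipulation, namely verifying that ``$d_0 > (q-1) + (q-1)/b$ with $d_0 \in \Z$'' collapses to ``$d_0 \geq q + \lfloor (q-1)/b\rfloor$'' uniformly, whether or not $b$ divides $q-1$. This is handled cleanly by the identity $\lfloor x \rfloor + 1 = \min\set{n \in \Z : n > x}$, which removes any need to split into the cases $b \mid (q-1)$ and $b \nmid (q-1)$ separately.
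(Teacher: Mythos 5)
Your proposal is correct and follows essentially the same route as the paper: both deduce the result from Proposition~\ref{prop:reg} with $a=1$ by observing that $d_0 b > (1+b)(q-1)$ is equivalent, for integer $d_0$, to $d_0 \geq q + \Floorfrac{q-1}{b}$. You simply spell out the floor manipulation and the absorption of the condition $d_0 \geq q$ in more detail than the paper does.
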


\begin{proof} By noticing that 
        \[d_0b > (1+b)(q-1) \: \Longleftrightarrow \: d_0 \geq \Floorfrac{q-1}{b}  + q,\]
the proof follows straightforwardly from Proposition \ref{prop:reg}.
\end{proof}

\section{A lower bound for the minimum distance}\label{sec:min-dist}

In this section we aim at grasping the minimum distance of a projective weighted Reed-Muller code obtained from $\Pp(1,a,b)$ by first establishing a lower bound.
 For every $f\in S_d\setminus I_d(Y)$, we denote by
$Y_f$ the subvariety $V_Y(f)$ of $Y$ consisting of the roots of $f$ and we write $n_f:= \size{Y_f}$. As usual for evaluation codes, we have
\[d_{min}(C_d)= n-\max \{ n_f : f\in S_d\setminus I_d(Y) \}.\]

By the virtue of Remark \ref{rem:trivialcodes}, from now on, we assume that $d\in \N \setminus \reg(Y)$. To give a lower bound on the minimum distance, it suffices to give an upper bound on $n_f$. Such a bound can be obtained using Gr\"obner basis theory, which is known in the literature  as \textit{the footprint bound}, see e.g. \cite{BDG2019,GH2000,Villarreal2017} for the affine and projective space cases, \cite{JNHir19} for Hirzebruch surfaces, and \cite{JNPro22} for more general toric varieties.

The \textit{footprint} of a homogeneous ideal $I$ of $S$ with respect to a monomial ordering $\prec$ on the set $\M$ of all monomials in $S$ is defined as follows
\[\Delta(I):=\{M\in \M: M \neq \lm(g) \text{ for any } g\in I \text{ with } g\neq 0\}.\] 
A relevant \textit{finite} subset is $\Delta_{\Tilde{d}}(I)=\Delta(I) \cap \M_{\Tid}$ for $\Tid \in \N$. It is well known from Gr\"obner basis theory that a monomial $M \neq \lm(g) \text{ for any } g\in I\setminus \{0\}$ if and only if $\lm(g) \nmid M$ for any $g$ in the Gr\"obner basis of $I$ with respect to $\prec$.
By \cite[Proposition 1.1]{S96}, the value $H_I(\Tid)$ of the Hilbert function of $I$ at $\Tid \in \N$ is the cardinality of $\Delta_{\Tilde{d}}(I)$. In particular, it implies that $\size{\Delta_{\Tilde{d}}(I)}$ is independent of the ordering $\prec$.

\bigskip

We rewrite \cite[Lemma 5.5]{JNPro22} in the notation used here.

\begin{lemma}\cite[Lemma 5.5]{JNPro22}\label{lem: footprintbound} Let $Y=\P(1,a,b)(\F_q)$ and $I(Y,f)$ denote the ideal $I(Y)+(f)$, for a homogeneous polynomial $f\in S$. Then for any $f\in S_d \setminus I_d(Y)$ and $\tilde{d} \in \reg(Y)$, we have $n_{f} \leq \Tilde{n}_f(\tilde{d} ):=H_{I(Y,f)}(\tilde{d} )=|\Delta_{\Tilde{d}}(I(Y,f))|.$ \qed
    \end{lemma}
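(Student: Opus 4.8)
The plan is to bound $n_f=\size{Y_f}$ from below by comparing the ideal $I(Y,f)=I(Y)+(f)$ with the full vanishing ideal $I(Y_f)$ of the point set $Y_f=V_Y(f)$, and to exploit that regularity of $Y$ is inherited by the subset $Y_f$. The final equality $H_{I(Y,f)}(\Tid)=\size{\Delta_{\Tid}(I(Y,f))}$ is immediate from \cite[Proposition 1.1]{S96} recalled just above the lemma, so the real content is the inequality $n_f\le H_{I(Y,f)}(\Tid)$.

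First I would record the elementary ideal inclusion
\[
I(Y,f)=I(Y)+(f)\subseteq I(Y_f).
\]
Indeed, any $g=h+pf$ with $h\in I(Y)$ and $p\in S$ vanishes at every $P\in Y_f$, since $h(P)=0$ (as $P\in Y$) and $f(P)=0$ (as $P\in Y_f=V_Y(f)$). Taking degree--$\Tid$ graded pieces gives $I(Y,f)_{\Tid}\subseteq I(Y_f)_{\Tid}$ as subspaces of $S_{\Tid}$, so comparing the dimensions of the quotients yields
\[
H_{I(Y,f)}(\Tid)=\dim_{\F_q}(S/I(Y,f))_{\Tid}\ge \dim_{\F_q}(S/I(Y_f))_{\Tid}=H_{Y_f}(\Tid).
\]

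Next I would prove that $\Tid\in\reg(Y)$ forces $\Tid\in\reg(Y_f)$, i.e. regularity is inherited by subsets. For this I consider the coordinate projection $\pi\colon \F_q^{Y}\to\F_q^{Y_f}$ that forgets the coordinates outside $Y_f$; with a consistent choice of representatives one has $\ev_{Y_f}=\pi\circ\ev_Y$. If $\Tid\in\reg(Y)$, then $\ev_Y$ is surjective, hence so is the composite $\ev_{Y_f}=\pi\circ\ev_Y$, giving $H_{Y_f}(\Tid)=\dim_{\F_q}\im(\ev_{Y_f})=\size{Y_f}=n_f$. Combined with the previous display, this produces $n_f\le H_{I(Y,f)}(\Tid)$, which together with the footprint identity is exactly the claim.

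The hard part will be the inheritance of regularity, and the subtlety there is specific to $\P(1,a,b)$: the evaluation map depends on a choice of representatives, so $\ev_{Y_f}=\pi\circ\ev_Y$ is only literally true once the \emph{same} representatives are used for the points of $Y_f$ in both maps. Since the Hilbert function (equivalently, the code dimension) is independent of this choice, the resulting identity $H_{Y_f}(\Tid)=n_f$ is unambiguous and the argument goes through. The remaining inequality is the soft direction: $I(Y,f)$ is in general neither saturated nor radical, so it may be strictly smaller than $I(Y_f)$, which is precisely why the lemma yields an upper bound $\Tilde{n}_f(\Tid)$ for $n_f$ rather than an equality.
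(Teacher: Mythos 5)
Your proof is correct and is the standard footprint-bound argument: the inclusion $I(Y,f)\subseteq I(Y_f)$ gives $H_{I(Y,f)}(\Tid)\ge H_{Y_f}(\Tid)$, surjectivity of $\ev_Y$ in a regularity degree passes to the subset $Y_f$ via the coordinate projection to give $H_{Y_f}(\Tid)=n_f$, and the identification with the footprint cardinality is \cite[Proposition 1.1]{S96}. The paper itself gives no proof (it cites \cite[Lemma 5.5]{JNPro22} with a \textsc{qed}), and your argument is exactly the one that citation rests on, including the correct handling of the representative-choice subtlety for $\P(1,a,b)$.
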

    
By \cite[Theorem 4.3.5]{BJ1993} there exists a uniquely determined quasi-polynomial $P_{M}$ with $H_{M}(d)=P_{M}(d)$ for all $d>a(M)$ for the module $M=S/I(Y,f)$, where $a(M)$ denotes the degree of the rational function representing the Hilbert series $HS_{M}(t)$. In other words, there exists a positive integer $g$ (period) and some polynomials $P_{0},\dots,P_{g-1}$ such that $H_{M}(d)=P_{i}(d)$ for $d>a(M)$ and $d\equiv i\mod g.$

Since $I(Y)\subseteq I(Y,f) \subseteq I(Y_f)$, the Krull dimension of $I(Y,f)$ is $1$, if $f$ is a zerodivisor of $S/I(Y)$. Therefore, the Hilbert quasi-polynomial of $S/J$ has degree $0$ if $J=I(Y,f)$ and $f$ is a zerodivisor of $S/I(Y)$, see \cite[Proposition 5]{CM15} or \cite{bavula}. Therefore, there are finitely many constants appearing as the values of $\Tilde{n}_f(\tilde{d} )$ for sufficiently large $\Tid$. We thus define $\Tilde{n}_f$ as the maximum of all these constants. This gives the following lower bound for the minimum distance.

\begin{equation}\label{eq:bound_algebra}
d_{min}(C_d)\geq n-\max \set{ \Tilde{n}_f : f\in S_d\setminus I_d(Y) \text{ is a zerodivisor of } S/I(Y) }.
\end{equation}

\subsection{A lower bound for the minimum distance using Gr\"obner basis theory}

In practice, it is difficult to compute Hilbert quasi-polynomials to get $\Tilde{n}_f$. Therefore, we appeal to a classical trick (see \cite[Lemma 2.5]{Villarreal2017} for instance) of Gr\"obner basis theory to give an easy upper bound for $\Tilde{n}_f(\Tid)$, which will be independent of $\Tid$.

\begin{lemma} \label{lem:U}
Let $X$ be a simplicial toric variety and $Y \subset X$ a subvariety.

Let $\cG$ denote the Gr\"obner basis of $I(Y)$ with respect to a term order $\prec$. Let 
\[\overline{\Delta}_{\Tid}(f)= \set{ M \in \M_{\Tid} : \forall g \in \cG \cup \set{f},\: \lm(g) \nmid M}. \]

Then, for any $f\in S_d \setminus I_d(Y)$ we have
\begin{equation} \label{eq:deltaf}
 \Tilde{n}_f(\Tid)=H_{I(Y,f)}(\Tid) \leq \size{\overline{\Delta}_{\Tid}(f)},   
\end{equation}

and $\Tilde{n}_f(\Tid)= \size{\overline{\Delta}_{\Tid}(f)}\iff \cG \cup \set{f}$ is the Gr\"obner basis of $I(Y,f)$ for $\prec$. \qed
\end{lemma}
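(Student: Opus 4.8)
The plan is to reduce everything to the footprint identity recalled just before the lemma, namely that for a homogeneous ideal $I$ one has $H_{I}(\Tid)=\size{\Delta_{\Tid}(I)}$, together with the characterization (also recalled above) that a monomial $M$ lies in the footprint of $I$ exactly when $\lm(g)\nmid M$ for every $g$ in a Gr\"obner basis of $I$, equivalently for every $g\in I\setminus\set{0}$. I would first record the standing facts: $I(Y,f)=I(Y)+(f)$ is homogeneous, $f\neq 0$ since $f\notin I_d(Y)$, and every element of $\cG\cup\set{f}$ is a nonzero element of $I(Y,f)$; moreover $\prec$ restricts to a term order on each graded piece $\M_{\Tid}$, so the footprint theory applies degree by degree.

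First I would prove the inclusion $\Delta_{\Tid}(I(Y,f))\subseteq \overline{\Delta}_{\Tid}(f)$. Indeed, if $M\in\Delta_{\Tid}(I(Y,f))$ then $\lm(g)\nmid M$ for every $g\in I(Y,f)\setminus\set{0}$; since $\cG\cup\set{f}\subseteq I(Y,f)\setminus\set{0}$, \emph{a fortiori} $\lm(g)\nmid M$ for every $g\in\cG\cup\set{f}$, which is exactly the condition defining $\overline{\Delta}_{\Tid}(f)$. Taking cardinalities and applying the footprint identity yields
\[
\Tilde{n}_f(\Tid)=H_{I(Y,f)}(\Tid)=\size{\Delta_{\Tid}(I(Y,f))}\leq \size{\overline{\Delta}_{\Tid}(f)},
\]
which is the desired bound \eqref{eq:deltaf}.

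For the equivalence I would exploit that the inclusion above holds in every degree and passes, by taking complements in $\M_{\Tid}$, to a comparison of the leading-term staircases of $\cG\cup\set{f}$ and of $I(Y,f)$. By definition, $\cG\cup\set{f}$ is a Gr\"obner basis of $I(Y,f)$ precisely when every nonzero $h\in I(Y,f)$ has $\lm(h)$ divisible by some $\lm(g)$ with $g\in\cG\cup\set{f}$; equivalently, when the reverse inclusion $\overline{\Delta}_{\Tid}(f)\subseteq \Delta_{\Tid}(I(Y,f))$ holds for all $\Tid$, i.e. when the two footprints coincide in every degree. Since each inclusion $\Delta_{\Tid}(I(Y,f))\subseteq \overline{\Delta}_{\Tid}(f)$ is an inclusion of finite sets, coincidence in degree $\Tid$ is the same as equality of cardinalities there. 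Thus $\cG\cup\set{f}$ is a Gr\"obner basis if and only if $\Tilde{n}_f(\Tid)=\size{\overline{\Delta}_{\Tid}(f)}$ in every degree, establishing the stated biconditional.

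The one point requiring care is exactly this last equivalence: it should be read over all degrees $\Tid$ (equivalently, as equality of the monomial ideals generated by the respective leading terms), because equality of the two cardinalities at a single degree does not by itself force $\cG\cup\set{f}$ to be a Gr\"obner basis---only the forward implication, that a Gr\"obner basis produces the equality, is valid degreewise. Everything else is the routine translation between footprints, standard monomials, and Hilbert functions already set up in the paragraphs preceding the lemma.
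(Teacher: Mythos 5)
Your proof is correct and is precisely the standard footprint argument that the paper tacitly invokes: the lemma is stated with a QED symbol and a pointer to the ``classical trick'' of Gr\"obner basis theory (e.g.\ \cite[Lemma 2.5]{Villarreal2017}), so the text supplies no proof to diverge from, and your two steps --- the inclusion $\Delta_{\Tid}(I(Y,f))\subseteq \overline{\Delta}_{\Tid}(f)$ combined with $H_{I(Y,f)}(\Tid)=\size{\Delta_{\Tid}(I(Y,f))}$, and the identification of the Gr\"obner basis property with equality of the two staircases --- are exactly what is intended. Your closing caveat is also well taken and worth keeping: the biconditional is only valid when the equality $\Tilde{n}_f(\Tid)=\size{\overline{\Delta}_{\Tid}(f)}$ is read across all degrees (equivalently as equality of leading-term ideals), since a Gr\"obner basis forces the equality degreewise but equality at a single $\Tid$ does not by itself imply that $\cG\cup\set{f}$ is a Gr\"obner basis of $I(Y,f)$.
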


In our case, \textit{i.e.} $X=\Pp(1,a,b)$ and $Y=X(\F_q)$, the set of monomials of degree $\Tid$ that are not divisible by $\lm(g)$ for all $g\in \cG $ (for $\prec \:= \: <_{\text{lex}}$)  is $\overline{\M}_{\Tid}$. Thus,
\begin{equation}\label{eq:Delta_f}
    \overline{\Delta}_{\Tid}(f)= \set{ M \in \overline{\M}_{\Tid} : \lm(f) \nmid M}.
\end{equation}

For the upper bound $\size{\overline{\Delta}_{\Tid}(f)}$, it suffices to consider $f$ to be a monomial as $\overline{\Delta}_{\Tid}(f)=\overline{\Delta}_{\Tid}(\lm(f))$.
We can also assume that $\lm(f)=\x^{\a,d}$ with $\a \in \red(d)$. Thus, for a fixed $\a \in \red(d)$ and $\Tid \geq d$, we consider the \emph{projective shadow} in degree $\Tid$ of the monomial $\x^{\a,d}$ 
\[\overline{\nabla}_{\Tid}(\x^{\a,d}) = \set{ \x^{\tilde{\a},\Tid} \in \overline{\M}_{\Tid} : \x^{\tilde{\a},\Tid} \text{ is divisible by } \x^{\a,d}}= \overline{\M}_{\Tid} \setminus \overline{\Delta}_{\Tid}(\x^{\a,d}). \]

\begin{corollary}\label{cor:U_lex}
Consider the lexicographic order $\prec \:= \: <_{\text{lex}}$. Suppose that $\Tid\in \reg(Y)$. Then for every $f \in S_d \setminus I(Y)$, we have
\[
\size{\overline{\Delta}_{\Tid}(f)}=\size{\overline{\Delta}_{\Tid}(\lm(f))}=n-\overline{\nabla}_{\Tid}(\lm(f)).
\]
\end{corollary}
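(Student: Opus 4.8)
The plan is to obtain both equalities directly from the definitions, the only substantive ingredient being that the hypothesis $\Tid\in\reg(Y)$ forces $\size{\overline{\M}_{\Tid}}=n$. As noted before the statement, we may take $\lm(f)=\x^{\a,d}$ with $\a\in\red(d)$, so that the projective shadow $\overline{\nabla}_{\Tid}(\lm(f))$ is defined.

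First I would dispose of the equality $\size{\overline{\Delta}_{\Tid}(f)}=\size{\overline{\Delta}_{\Tid}(\lm(f))}$. By the description in Equation \eqref{eq:Delta_f}, the set $\overline{\Delta}_{\Tid}(f)=\set{M\in\overline{\M}_{\Tid}:\lm(f)\nmid M}$ depends on $f$ only through its leading monomial $\lm(f)$; hence $\overline{\Delta}_{\Tid}(f)=\overline{\Delta}_{\Tid}(\lm(f))$ as sets, and in particular their cardinalities agree.

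For the second equality I would invoke the complementary relation built into the definition of the projective shadow, namely $\overline{\nabla}_{\Tid}(\lm(f))=\overline{\M}_{\Tid}\setminus\overline{\Delta}_{\Tid}(\lm(f))$. This is precisely the partition of $\overline{\M}_{\Tid}$ according to divisibility by $\lm(f)$, so taking cardinalities gives
\[\size{\overline{\Delta}_{\Tid}(\lm(f))}=\size{\overline{\M}_{\Tid}}-\size{\overline{\nabla}_{\Tid}(\lm(f))}.\]
It remains to show $\size{\overline{\M}_{\Tid}}=n$, which is the single place the regularity hypothesis is used. By Theorem \ref{theo:basis_code} we have $\size{\overline{\M}_{\Tid}}=\size{\red(\Tid)}=\dim_{\F_q}C_{\Tid}=H_Y(\Tid)$, and since $\Tid\in\reg(Y)$ the defining property of the regularity set yields $H_Y(\Tid)=\size{Y}=n$. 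Substituting completes the proof. There is no real obstacle here: the whole content is the observation that regularity is exactly the condition promoting $\overline{\M}_{\Tid}$ to full length $n$, so that the footprint bound of Lemma \ref{lem:U} is governed entirely by the size of the projective shadow $\overline{\nabla}_{\Tid}(\lm(f))$.
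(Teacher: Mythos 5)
Your proposal is correct and follows essentially the same route as the paper: the paper's proof likewise notes that $\Tid\in\reg(Y)$ gives $\size{\overline{\M}_{\Tid}}=H_Y(\Tid)=n$ and then reads off both equalities from Equation \eqref{eq:Delta_f} and the complementarity of $\overline{\Delta}_{\Tid}$ and $\overline{\nabla}_{\Tid}$ inside $\overline{\M}_{\Tid}$. You have merely spelled out the steps the paper leaves implicit.
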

\begin{proof}

When $\Tid\in \reg(Y)$, $|\overline{\M}_{\Tid}|=H_{I(Y)}(\Tid) =n$. The formula for the cardinality of $\size{\overline{\Delta}_{\Tid}(f)}$ follows directly from Equation \eqref{eq:Delta_f}.
\end{proof}

For every $\a \in \red(d)$ and $\Tid \in \reg(Y)$, we set
\begin{equation}\label{eq:defL}
L(\a,\Tid):=\size{\overline{\nabla}_{\Tid}(\x^{\a,d})}=n-\size{\overline{\Delta}_{\Tid}(\x^{\a,d})}.
\end{equation}

\begin{lemma} \label{lem:mindist}
Let $d\in \N \setminus \reg(Y)$ and $\Tid \in \reg(Y)$. Then
\[
d_{min}(C_d)\ge \min \set{L(\a,\Tid): \a \in \red(d)}.
\]
\end{lemma}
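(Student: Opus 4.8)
\textbf{The plan} is to assemble the lower bound from the chain of inequalities and identities already established in this section, holding a single auxiliary degree $\Tid \in \reg(Y)$ fixed throughout. The starting point is the evaluation-code formula $d_{min}(C_d)= n-\max\set{ n_f : f\in S_d\setminus I_d(Y) }$, so it suffices to bound $n_f$ from above, uniformly over all $f\in S_d\setminus I_d(Y)$, by the quantity $n-\min\set{L(\a,\Tid): \a \in \red(d)}$.

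First I would dispose of the nonzero-divisor case. If $f\in S_d\setminus I_d(Y)$ is \emph{not} a zerodivisor of $S/I(Y)$, then $V_Y(f)=\emptyset$ and $n_f=0$, which trivially satisfies the asserted bound; hence only zerodivisors need genuine attention. For such an $f$, Lemma~\ref{lem: footprintbound} gives $n_f\le \Tilde{n}_f(\Tid)=H_{I(Y,f)}(\Tid)$ for the chosen $\Tid \in \reg(Y)$. Next, Lemma~\ref{lem:U} yields $\Tilde{n}_f(\Tid)\le \size{\overline{\Delta}_{\Tid}(f)}$, and because $\overline{\Delta}_{\Tid}(f)=\overline{\Delta}_{\Tid}(\lm(f))$ depends only on the leading monomial, I may reduce to the case $\lm(f)=\x^{\a,d}$ with $\a \in \red(d)$ (this is exactly the normalization recorded after Equation~\eqref{eq:Delta_f}). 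Then Corollary~\ref{cor:U_lex} converts the footprint count into $\size{\overline{\Delta}_{\Tid}(\x^{\a,d})}=n-\size{\overline{\nabla}_{\Tid}(\x^{\a,d})}=n-L(\a,\Tid)$, using the definition in Equation~\eqref{eq:defL}.

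Chaining these together gives, for every zerodivisor $f$ with $\lm(f)=\x^{\a,d}$,
\begin{equation*}
n_f \le n-L(\a,\Tid) \le n-\min\set{L(\a',\Tid): \a' \in \red(d)},
\end{equation*}
and taking the maximum over all admissible $f$ preserves this inequality. Substituting into the evaluation-code formula yields
\begin{equation*}
d_{min}(C_d)= n-\max\set{n_f} \ge \min\set{L(\a,\Tid): \a \in \red(d)},
\end{equation*}
which is the claim.

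The conceptual content is entirely carried by the earlier lemmas, so the proof is essentially a bookkeeping assembly; the one point requiring care is the reduction to $\lm(f)=\x^{\a,d}$ with $\a\in\red(d)$. One must check that every leading monomial of an $f\in S_d\setminus I_d(Y)$ can be taken in $\overline{\M}_d$ (equivalently indexed by $\red(d)$): since $\overline{\M}_d$ consists precisely of the degree-$d$ monomials not divisible by any $\lm(g)$, $g\in\cG$, a polynomial whose leading monomial were divisible by some such $\lm(g)$ could be reduced modulo $I(Y)$ without changing its evaluation, so without loss of generality its leading term lies in $\overline{\M}_d$. This is the only step where I would be careful to invoke Theorem~\ref{thm:UniversalBasis} (to know $\cG=\set{f_0,f_1,f_2}$ works for the lex order) and the description of $\overline{\M}_{\Tid}$ preceding Equation~\eqref{eq:Delta_f}; everything else is a direct substitution.
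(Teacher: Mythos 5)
Your proposal is correct and follows essentially the same route as the paper: the evaluation-code formula, reduction of $\lm(f)$ to $\overline{\M}_d$ (the paper cites Theorem~\ref{theo:basis_code} for this, you reduce modulo the Gr\"obner basis, which is the same underlying fact), then chaining Lemma~\ref{lem: footprintbound}, Lemma~\ref{lem:U} and Corollary~\ref{cor:U_lex} to get $n_f\le n-L(\a,\Tid)$. Your extra remark disposing of non-zerodivisors (for which $V_Y(f)=\emptyset$) is harmless and not needed, since the cited lemmas apply to any $f\in S_d\setminus I_d(Y)$.
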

\begin{proof} Clearly, we have 
\[
 d_{min}(C_d)= n-\max \{|V_Y(f)|: f\in S_d\setminus I_d(Y)\}.
 \]
 Let $f\in S_d\setminus I_d(Y)$ and write $\lm(f)=\x^{\a,d}$. By Theorem \ref{theo:basis_code}, we can assume that $\a \in\red(d)$. Lemmas \ref{lem: footprintbound} and \ref{lem:U} imply that $|V_Y(f)|\le \size{\overline{\Delta}_{\Tid}(\lm(f))}$, and thus
 \[
 d_{min}(C_d)\ge n-\max \{\size{\overline{\Delta}_{\Tid}(\x^{\a,d})}: \a\in \red(d)\} = \min \{L(\a,\Tid): \a \in \red(d)\},
 \] 
as claimed.
\end{proof}

\begin{remark} \label{rem:L>0} Let $\a \in \red(d)$.
When $\Tid \in \reg(Y)$, the monomials $x_0^{\Tid}$, $x_1^{\Tid/a}$ and $x_2^{\Tid/b}$ must belong to $\overline{\M}_{\Tid}$. So, if $\x^{\a,d}$ divides two of these, it must be a constant forcing $d=0$. Then if $d \geq 1$, we have $L(\a,\Tid) \le n-2$.
Moreover, if $d \leq \Tid$, the monomial $\x^{\a,d}$ divides $\x^{\a,\Tid}$, resulting in $L(\a,\Tid)\geq 1$ as $d\notin \reg(Y)$. As a consequence, the bound provided by Lemma \ref{lem:mindist} is not trivial.
\end{remark}

\subsection{Computation of the lower bound for the minimum distance.}

Our strategy is the following. We first get rid of the dependency of the lower bound on the degree $\Tid$: Proposition \ref{prop:func_L} provides a formula for the cardinality of the projective shadow of monomials in $\overline{\M}_d$ in degree $\Tid$ that do not depend on $\Tid$. 

We will then determine the minimum of $L=L(\cdot,\Tid)$ on the convex area $\red(d) \cap \set{a_2 \leq \mu_b}$ in \textsection \ref{subsubsec:minleq_q-1}.
One can easily check from the definition of $\red(d)$ that
\begin{itemize}
    \item if $d \leq bq-1$, then $\mu_b=\Floorfrac{d-1}{b}$ and $a_2 \leq \mu_b$ for every $\a \in \red(d)$ except for $\a=(0,d/b)$ in the case $b \mid d$ (see Figure \ref{fig:case1} for an example);
    \item if $d \geq bq$, then $\mu_b=q-1$. If $d-bq \in \langle a,b\rangle$, then there exists some $\a=(a_1,a_2) \in \red(d)$ such that $a_2 \geq q$. By definition of $\red(d)$ such an element $\a$ should belong to $H(d)$ (see Figure \ref{fig:case2} for an example). In this case, we will have to compare the minimum on the area $\red(d) \cap \set{a_2 \leq \mu_b}$ and the values outside, which is the heart of \textsection \ref{sec:lowerbound}.
\end{itemize}

\begin{figure}[h]
\begin{subfigure}[b]{0.44\textwidth}
\centering
\includegraphics[width=\textwidth]{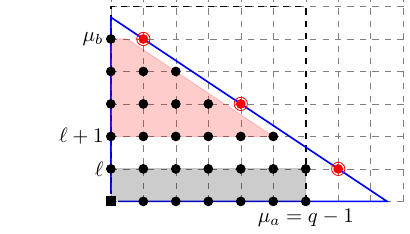}
\caption{$d\leq bq-1$ then $\mu_b=\Floor{\frac{d-1}{b}}$ and \\$a_2\leq\mu_b$ for every $\a\in \red(d)$} \label{fig:case1}
\end{subfigure}
\begin{subfigure}[b]{0.54\textwidth}
\centering
\includegraphics[width=\textwidth]{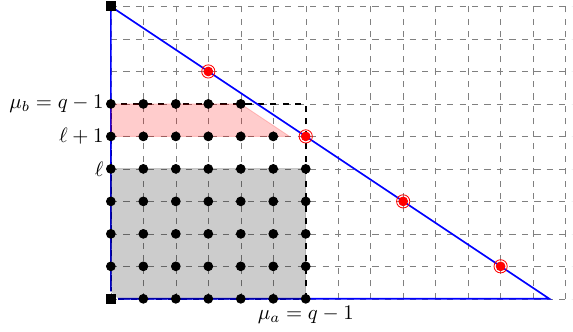}
\caption{$d\ge bq$, then $\mu_b=q-1$.}\label{fig:case2}
\label{fig:case2}
\end{subfigure}
\caption{Lattice points $\a\in\red(d)$ for $\P(1,2,3)(\F_7)$, and $d=17$ and $d=27$ respectively.}
\end{figure}

\begin{proposition} \label{prop:func_L}
For $\Tid \in \reg(Y)$ satisfying $\Tid \geq d + (q-1)\max\{a+b,ab\}$ and for every $\a=(a_1,a_2)\in\red(d)$, the quantity $L(\a,\Tid)$ does not depend on $\Tid$ and equals the following number denoted by $L(\a)$
\begin{equation*}\label{eq:func_L}
    \begin{cases}
        (q-a_1)(q-a_2) & \text{if } aa_1+ba_2\neq d\\
         \max\{q-a_1,0\}\max\{q-a_2,0\} + q- 1 - \Floor{\frac{a_2-1}{a}} & \text{if } aa_1+ba_2=d \text{ and } a_1 \neq 0,\\
        q\cdot\max\{q-\frac{d}{b},0\} + \max\{q- \Floor{\frac{d-b}{ab}},1\} & \text{if } (a_1,a_2)=(0,\frac{d}{b}) \text{ and } b \mid d.   
    \end{cases}
\end{equation*}
\end{proposition}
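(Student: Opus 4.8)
The plan is to compute the projective shadow $\overline{\nabla}_{\Tid}(\x^{\a,d})$ by listing explicitly which standard monomials $\x^{\tilde\a,\Tid}$, $\tilde\a\in\red(\Tid)$, are divisible by $\x^{\a,d}$, and then reading off the count. First I would pin down $\red(\Tid)$ for $\Tid\in\reg(Y)$: by Remark \ref{rem:cases} and the proof of Proposition \ref{prop:reg} such a $\Tid$ forces $\mu_a=\mu_b=\ell=q-1$, so $R(\Tid)$ is the full square $[0,q-1]^2\cap\Z^2$, $T(\Tid)=\emptyset$, and, since $a\mid\Tid$ and $b\mid\Tid$, Lemma \ref{lem:red} gives
\[
\red(\Tid)=\big([0,q-1]^2\cap\Z^2\big)\ \cup\ \set{(\Tid/a-kb,\,ka):1\le k\le q-1}\ \cup\ \set{(\Tid/a,0),(0,\Tid/b)},
\]
the middle set being $E_h(\Tid)$. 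It is convenient to write every lattice point of the hypotenuse $ax+by=\Tid$ as $(\Tid/a-kb,ka)$, so that $k=0$ is the corner $(\Tid/a,0)$, $k=1,\dots,q-1$ gives $E_h(\Tid)$, and $k=d_0:=\Tid/(ab)\ge q$ is the corner $(0,\Tid/b)$.

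Next I would translate divisibility: $\x^{\a,d}\mid\x^{\tilde\a,\Tid}$ holds if and only if $\tilde a_1\ge a_1$, $\tilde a_2\ge a_2$, and (the $x_0$-exponent condition) $a\tilde a_1+b\tilde a_2\le \Tid-d+s$ with $s:=aa_1+ba_2$. The whole point of the hypothesis $\Tid\ge d+(q-1)\max\set{a+b,ab}$ is that it trivialises two of these conditions in a $\Tid$-independent way: for $\tilde\a$ in the square, $a\tilde a_1+b\tilde a_2\le(q-1)(a+b)\le\Tid-d\le\Tid-d+s$, so the $x_0$-condition is free; and for $\tilde\a$ on the hypotenuse with $k\le q-1$, one has $\Tid/a-kb\ge\Tid/a-(q-1)b\ge d/a\ge a_1$, so $\tilde a_1\ge a_1$ is free. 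This is exactly where both $a+b$ and $ab$ enter, and it shows the resulting count cannot depend on $\Tid$.

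I would then split $L(\a)=\size{\overline{\nabla}_{\Tid}(\x^{\a,d})}$ into a square part and a hypotenuse part. The square contributes the lattice points with $a_1\le\tilde a_1\le q-1$ and $a_2\le\tilde a_2\le q-1$, namely $\max\set{q-a_1,0}\max\set{q-a_2,0}$; since $R(d)\cup T(d)$ is precisely the set of $\a$ with $s<d$ (all of which satisfy $a_1\le\mu_a\le q-1$ and $a_2\le\mu_b\le q-1$), this equals $(q-a_1)(q-a_2)$ in Case 1. A hypotenuse point satisfies $a\tilde a_1+b\tilde a_2=\Tid$, so its $x_0$-condition reads $\Tid\le\Tid-d+s$, i.e.\ $s\ge d$; combined with $s\le d$ on $P_d$, hypotenuse points contribute if and only if $s=d$, that is $\a\in H(d)$. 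This yields Case 1 at once (no hypotenuse contribution). For $\a\in H(d)$ the surviving constraint is $ka\ge a_2$, plus corner bookkeeping: in Case 2 ($a_1\ne0$) the corner $(0,\Tid/b)$ fails $\tilde a_1\ge a_1$ and is dropped, and counting $k\in\set{0,\dots,q-1}$ with $k\ge\Ceil{a_2/a}$ yields $q-\Ceil{a_2/a}=q-1-\Floor{\frac{a_2-1}{a}}$ through the identity $\Ceil{\frac{m}{a}}=\Floor{\frac{m-1}{a}}+1$; in Case 3 ($\a=(0,d/b)$) the corner $(0,\Tid/b)$ always survives (as $\Tid/b\ge d/b$) while $(\Tid/a,0)$ does not, and counting $k\in\set{1,\dots,q-1}$ with $ka\ge d/b$ and adding the surviving corner gives $\max\set{q-\Ceil{d/(ab)},0}+1=\max\set{q-\Floor{\frac{d-b}{ab}},1}$.

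The hard part will be the bookkeeping rather than any single idea: one must confirm that the three cases genuinely partition $\red(d)$ (via $R(d)\cup T(d)=\set{\a:s<d}$ and $H(d)=\set{\a:s=d}$), and then track exactly which truncations are needed. In particular I would verify that on $E_h(d)$ one has $a_2\le(q-1)a$ (so $\Ceil{a_2/a}\le q-1$ and the Case 2 hypotenuse count needs no $\max$), that the first coordinate on $E_h(d)$ may exceed $q-1$ (so the square factor genuinely needs $\max\set{q-a_1,0}$), and that in Case 3 the non-corner count can vanish (forcing the $\max\set{\cdot,1}$ coming from the ever-present corner $(0,\Tid/b)$). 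A final sanity check is that none of these expressions retains dependence on $\Tid$, which is guaranteed by the reductions made in the second step.
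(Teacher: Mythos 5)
Your proposal is correct and follows essentially the same route as the paper's proof: both describe $\red(\Tid)$ explicitly for $\Tid\in\reg(Y)$, translate divisibility of standard monomials into the three inequalities, use the hypothesis $\Tid\ge d+(q-1)\max\{a+b,ab\}$ to render the square and hypotenuse conditions independent of $\Tid$, and then count the square and hypotenuse contributions separately with the same corner bookkeeping and the same ceiling/floor identity. The only cosmetic difference is that you observe directly that hypotenuse points of $\red(\Tid)$ contribute if and only if $aa_1+ba_2=d$, which the paper phrases contrapositively.
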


\begin{proof}
Fix $\a=(a_1,a_2)\in\red(d)$ and let $\x^{\a,d}=x_0^{d-aa_1-ba_2}x_1^{a_1}x_2^{a_2}$. Let us count the number of couples $\tilde{\a}=(\Tia_1,\Tia_2)\in\red(\Tid)$ such that the monomial $\x^{\a,d}$ divides $\x^{\tilde{\a},\Tid}=x_0^{\Tid-a\Tia_1-b\Tia_2}x_1^{\Tia_1}x_2^{\Tia_2}$. We clearly have the following equivalence.
\begin{equation}\label{eq:divisibility}
 \x^{\a,d} \text{ divides } \x^{\tilde{\a},\Tid} \iff \left\{\begin{array}{rcl}
     a_1 &\leq& \Tia_1,\\
     a_2 &\leq& \Tia_2 \\
     d-aa_1-ba_2 &\leq& \Tid-a\Tia_1-b\Tia_2.
 \end{array}\right.   
\end{equation}

Before going further, let us notice that the hypothesis $\Tid \in \reg(Y)$ ensures that $|\red(\Tid)|=q^2+q+1$. In particular, $\red(\Tid)=R(\Tid) \cup H(\Tid)$ with
\begin{equation}\label{eq:Ptid}
R(\Tid)=\{(\Tia_1,\Tia_2)\in \Z^2 : 1\leq \Tia_1 \leq q-1 \text{ and } 1\leq \Tia_2 \leq q-1\}
\end{equation}
and 
\begin{equation}\label{eq:Htid}
H(\Tid)=\{(b(d_0-y_0),y_0a) \in \Z^2: 0\leq y_0 \leq q-1 \text{ and }y_0=d_0\},
\end{equation}
where $\Tid=d_0ab$, for some $d_0\in \N$ with $d_0\geq q$ (see Proposition \ref{prop:reg}).

\medskip

If $\a=(a_1,a_2)\in \red(d)\setminus H(d)$ then $aa_1+ba_2< d$ and so a necessary requirement for the third condition to hold in \eqref{eq:divisibility} is to have $a\Tia_1+b\Tia_2 < \Tid$ as well. This means that $(\Tia_1,\Tia_2)\in R(\Tid)$ and so $\Tia_1 \leq q-1$ and $\Tia_2 \leq q-1$. Then
\[\Tid -d \geq (q-1)(a+b)\geq a\Tia_1+b\Tia_2 \geq a(\Tia_1-a_1)+b(\Tia_2-a_2),\]
implying the third condition  in \eqref{eq:divisibility}.
Hence, all the conditions in \eqref{eq:divisibility} hold for $0 \leq a_1 \leq \Tia_1 \leq q-1$ and $0 \leq a_2 \leq \Tia_2 \leq q-1$. Therefore, $\Tia_1$ and $\Tia_2$ respectively admit $q-a_1$ and $q-a_2$ possible values, proving the first description of $L$. 

\medskip

Now assume that $\a\in H(d)\setminus \{(0,d/b)\}$. Then, $aa_1+ba_2= d$, which means that the third condition in $(\ref{eq:divisibility})$ is satisfied, and we have $\x^{\a,d}$ divides $\x^{\tilde{\a},\Tid}$ if and only if $a_1 \leq \Tia_1$ and $a_2 \leq \Tia_2$. The number of such tuples $\tilde{\a} \in R(\Tid)$ is given by $\max\{q-a_1,0\}\max\{q-a_2,0\}$, as $a_1$ and $a_2$ may be larger than $q-1$ but $\Tia_1 \leq q-1$ and $\Tia_2 \leq q-1$. Note that $\a\neq(0,d/b)$ implies that $a_1 \geq 1$ and $0 \leq a_2 < aq$, see Equation \eqref{eq:Eh}. By Equation \eqref{eq:Htid}, for every $\tilde{\a}=(\Tia_1,\Tia_2)\in H(\Tid)$ we have
\[\begin{aligned}
  a_1 \leq \Tia_1 &\Longleftrightarrow& d-ba_2 \leq \Tid -aby_0 \\
                  &\Longleftrightarrow& aby_0\leq \Tid - d +ba_2
\end{aligned}\]
so the hypothesis $\Tid \geq d + (q-1)\max\{a+b,ab\}$ reduces the condition $a_1 \leq \Tia_1$ to $\Tia_1 \geq 1$ for every $(\Tia_1,\Tia_2) \in H(\Tid)\setminus \{(0,\Tid/b)\}$. As there are exactly $\Floorfrac{a_2-1}{a}$ tuples $\tilde{\a}$ such that $\Tia_2= ay_0 < a_2$,  we obtain the formula
\[L(\a)=\max\{q-a_1,0\}\max\{q-a_2,0\}+q-1-\Floorfrac{a_2-1}{a}\]
proving the second case.

%
%
%
%


\medskip

Finally, assume that $\a=(0,d/b)$ with $b\mid d$. The only difference with the previous case is that $\Tia_1$ can also be $0$. So, inserting $a_1=0$, $a_2=d/b$ in the previous formula, and adding $1$ for the corner $(0,\Tid/b)$, we obtain the last formula:
$q\cdot\max\{q-\frac{d}{b},0\} + \max\{q- \Floor{\frac{d-b}{ab}},1\}$, completing the proof.
\end{proof}
\begin{remark}
Even though the second formula for $L(\a)$ in Proposition \ref{prop:func_L} can take on the value $0$, this occurs only if $\a \notin \red(d)$, by the virtue of Remark \ref{rem:L>0}. 

We recover \cite[Lemma 4.1]{BDG2019} when plugging $a=b=1$ in Proposition \ref{prop:func_L}.
\end{remark}

The next example shows that the lower bound for $\Tid$ in Proposition \ref{prop:func_L} is sharp.

\begin{example} \label{ex:example1}
 Let $q=2$, $X=\P(1,1,3)$ and $Y=X(\F_q)$. By Theorem \ref{thm:UniversalBasis}, the following is the universal Gr\"obner basis for $I(Y)$: 
$$\mathcal{G}=\{ f_0=x_2^{2}x_1+{x}_{2}x_1^{4},f_1=x_2^{2}x_0+x_2x_0^{4},  f_2=x_1^{2}x_0+x_1x_0^{2}\}.$$ 

One can compute the minimum distance of the code $C_{4,Y}$ to be $2$ using one of the algorithms given by \cite{BalSah23}. This reveals that a homogeneous polynomial $f$ having the maximum possible number of roots has $n_f=|Y|-2=5$. We use this to analyze the smallest possible element $\Tid$ for which Proposition \ref{prop:func_L} works.

 For $
d=(q-1)(a+b)=4$, a basis for $S_{d}/I_{d}(Y)$ is given by 
$$\overline{\M}_4=\{x_2x_1,\,x_2x_0,\,x_1^{4},\,x_1x_0^{3},\,x_0^{4}\}.$$
For any $\Tid \in \reg(Y)$, we have
$$\overline{\M}_{\Tid}=\{x_2^{\Tid/3}, x_2x_1^{\Tid-3}, x_2x_1x_0^{\Tid-4}, x_2x_0^{\Tid-3}, x_1^{\Tid}, x_1x_0^{\Tid-1},  x_0^{\Tid}\}.$$
Using the formulae of Proposition \ref{prop:func_L}, we get
\[L(a_1,a_2)=\begin{cases} 4 & \text{if } (a_1,a_2)=(0,0), \\
2 & \text{if } (a_1,a_2)\in\set{(1,1),(0,1),(4,0),(1,0)}.
\end{cases} \]

Since $\reg(Y)=6+3\N$ by Theorem \ref{theo:reg11b}, we first look at the values of the Hilbert function of $I(Y)+\la f \ra$ for every $f\in \overline{\M}_4$ and get
\[ H_{I(Y)+\la f \ra}(6)=\begin{cases}
6 & \text{for } f=x_1x_0^{3},\\
5 & \text{for }  f\in \overline{\M}_4\setminus \{x_1x_0^{3}\}.
\end{cases}
\]
This reveals that for $f=x_1x_0^{3}$, $H_{I(Y)+\la f \ra}(6)$ is bigger than the biggest $n_f$.

However, taking $\Tid \geq 2(q-1)(a+b)=8$, \textit{i.e.} $\Tid \in 9+3\N$, remedies the problem. Indeed, one can observe that $\Tilde{n}_f(\Tid)=\size{\overline{\Delta}_{\Tid}(f)}=5$ for all $\Tid \in 9+3\N$ and for $f=x_1x_0^{3}$, as the Gr\"obner basis of $I(Y)+\la f \ra$ is $\mathcal{G}\cup \{f\}$ and a basis for $S_{\Tid}/I_{\Tid}(Y,f)$ is obtained as $\overline{\Delta}_{\Tid}(f)=\{x_2^{\Tid/3}, x_2x_1^{\Tid-3}, x_2x_0^{\Tid-3}, x_1^{\Tid},   x_0^{\Tid}\}.$  

For any $\Tid \in 9+3\N$, we check that $\Tilde{n}_f(\Tid)=\size{\overline{\Delta}_{\Tid}(f)}=5$  for $f\in \{x_2x_1,\,x_2x_0\}$, $\Tilde{n}_f(\Tid)=3<\size{\overline{\Delta}_{\Tid}(f)}=5$  for $f=x_1^{4}$ and $\Tilde{n}_f(\Tid)=\size{\overline{\Delta}_{\Tid}(f)}=3$  for $f=x_0^{4}$. 
\end{example}

Example \ref{ex:example1} shows that the minimum of the function $L$ on the set $\red(d)$ provided by Lemma \ref{lem:mindist} seems to give the exact minimum distance. Let us start with large degrees $d$, corresponding to high rate codes $C_d$, for which the minimum value of the function $L$ is exactly equal to 1.

\begin{proposition}\label{prop:trivialcase}
If $\Tid \geq d > (a+b)(q-1)$, then $\min_{\a \in \red(d)} L(\a,\Tid)=1$.
\end{proposition}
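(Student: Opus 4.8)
The goal is to show that under the hypothesis $\Tid \geq d > (a+b)(q-1)$ the minimum of $L(\cdot,\Tid)$ over $\red(d)$ equals $1$. The plan is to first establish that $L(\a,\Tid)\geq 1$ for every $\a\in\red(d)$, and then exhibit a single $\a\in\red(d)$ at which $L$ attains the value $1$. The lower bound $L(\a,\Tid)\geq 1$ is already recorded in Remark~\ref{rem:L>0}: since $d\leq\Tid$, the monomial $\x^{\a,d}$ divides $\x^{\a,\Tid}$, and as $d\notin\reg(Y)$ the shadow $\overline{\nabla}_{\Tid}(\x^{\a,d})$ is nonempty, giving $L(\a,\Tid)\geq 1$. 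So the whole matter reduces to producing a witness achieving $L=1$.

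To find the witness I would use the explicit formulae of Proposition~\ref{prop:func_L}, since the hypothesis $\Tid \geq d + (q-1)\max\{a+b,ab\}$ need not hold here; however Proposition~\ref{prop:func_L} is stated only under that stronger bound, so I would work directly with the divisibility characterization in \eqref{eq:divisibility} instead. The natural candidate is a lattice point $\a\in\red(d)$ that sits as ``high'' as possible in $\red(d)$, i.e.\ one lying on the hypotenuse $H(d)$ with a large value of $a_2$ forcing $a_1,a_2$ close to $q-1$. First I would observe that the hypothesis $d>(a+b)(q-1)$ means $(q-1,q-1)\in P_d^\circ$, so by Remark~\ref{rem:cases} we have $\mu_a=\mu_b=q-1$ and $\ell$ may be as large as $q-1$; in particular $\red(d)$ contains points with both coordinates near $q-1$. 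The key is that for a monomial $\x^{\a,d}$ with $\a$ near the ``corner'' of the reduction region, the only $\tilde\a\in\red(\Tid)$ divisible by it is $\tilde\a=(q-1,q-1)$ itself, making the shadow a singleton.

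Concretely, I would take $\a=(a_1,a_2)\in\red(d)$ on the hypotenuse with $aa_1+ba_2=d$ chosen so that $a_1,a_2\geq 1$ and as large as the constraints permit; the cleanest choice is a point with $a_2=q-1$ or $a_1=q-1$ when available, or more robustly the point $\a$ maximizing $a_1+a_2$ subject to $\a\in R(\Tid)$-divisibility. Using \eqref{eq:divisibility} together with the description of $\red(\Tid)=R(\Tid)\cup H(\Tid)$ from \eqref{eq:Ptid}--\eqref{eq:Htid}, I would verify that the conditions $a_1\leq\tilde a_1\leq q-1$ and $a_2\leq\tilde a_2\leq q-1$ pin $\tilde\a$ down to the unique point $(q-1,q-1)$, while no point of $H(\Tid)$ is divisible (since on the hypotenuse the $x_0$-exponent is zero and cannot absorb the strict inequality $aa_1+ba_2<\Tid$). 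This yields $L(\a,\Tid)=1$.

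\textbf{Main obstacle.}
The delicate step is exhibiting a point $\a\in\red(d)$ that genuinely lies \emph{in} the reduction region and simultaneously forces the shadow to be a singleton; one must check that such an $\a$ with $a_1,a_2$ both equal to (or forced up against) $q-1$ actually survives the reduction $\sim_{P_d}$ rather than being discarded in favor of a lex-smaller representative, and that the corner point $(q-1,q-1)\in R(\Tid)$ is the only divisor-target. Handling the interplay between the hypotenuse case (where $aa_1+ba_2=d$ removes the $x_0$-constraint) and the possibility $b\mid d$ or $a\mid d$ (the third branch of Proposition~\ref{prop:func_L}) is where the argument requires the most care; I expect one may need to split according to whether the relevant near-corner point lies on the hypotenuse or strictly inside $P_d^\circ$, but in every case the upper bound $L\leq 1$ combined with the universal lower bound $L\geq 1$ closes the proof.
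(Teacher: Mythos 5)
Your skeleton is right (the lower bound $L\geq 1$ from Remark~\ref{rem:L>0}, plus a single witness with $L=1$), and you correctly notice that Proposition~\ref{prop:func_L} cannot be invoked verbatim because its hypothesis $\Tid \geq d+(q-1)\max\{a+b,ab\}$ is not assumed here, so one must argue directly from the divisibility criterion \eqref{eq:divisibility}. But the witness you commit to is the wrong one, and this is the entire content of the proof. You pass right by the correct point: $d>(a+b)(q-1)$ means exactly that $(q-1,q-1)$ lies in $P_d^\circ$, hence (by Remark~\ref{rem:cases}, $\ell=\mu_a=\mu_b=q-1$) the point $(q-1,q-1)$ itself belongs to $R(d)\subseteq\red(d)$. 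Its monomial $x_0^{d-(a+b)(q-1)}x_1^{q-1}x_2^{q-1}$ has \emph{positive} $x_0$-exponent, so it divides no monomial indexed by $H(\Tid)$ (whose $x_0$-exponent is zero), and among $\tilde\a\in R(\Tid)$ the conditions $q-1\leq\tilde a_i\leq q-1$ force $\tilde\a=(q-1,q-1)$, with the third condition of \eqref{eq:divisibility} reducing to $d\leq\Tid$. The shadow is a singleton and $L(q-1,q-1)=1$. That is the paper's (one-line) proof.

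Instead you insist on a hypotenuse point $\a\in H(d)$ with $aa_1+ba_2=d$. This fails for two reasons. First, such a point need not exist: e.g.\ $(a,b,q)=(2,5,2)$ and $d=8>(a+b)(q-1)=7$ has no representation $2a_1+5a_2=8$ with $a_1,a_2\geq 1$, so $E_h(d)=\emptyset$. Second, even when it exists, $d>(a+b)(q-1)$ forces at least one of $a_1,a_2$ to exceed $q-1$, so the relevant formula is $L(\a)=\max\{q-a_1,0\}\max\{q-a_2,0\}+q-1-\Floor{(a_2-1)/a}$, which for a generic hypotenuse point is much larger than $1$ (for instance with $a\geq 2$ and $a_1\geq q$ it equals $q-1-\Floor{(a_2-1)/a}$, which is $1$ only when $a_2\geq a(q-2)+1$). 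Your closing remark that one ``may need to split according to whether the point lies on the hypotenuse or strictly inside $P_d^\circ$'' shows you sensed the problem, but the proof as planned does not produce a valid witness; replacing the hypotenuse candidate by the interior corner $(q-1,q-1)$ repairs it completely.
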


\begin{proof}
The hypothesis $d > (a+b)(q-1)$ is equivalent to $(q-1,q-1)$ lying in $P_d^\circ$, hence in $\red(d)$. The minimum of $L$ is thus clearly reached at this point, since $\x^{\tilde{\a},\Tid}$ is divisible by $\x^{\a,d}=x_0^{d-(q-1)(a+b)} x_1^{q-1} x_2^{q-1}$ if and only if $\Tia_1=q-1$, $\Tia_2=q-1$ and $\Tid \geq d$.
\end{proof}

\subsubsection{Minimum of $L(a_1,a_2)$ on $\red(d)\cap \set{a_2 \leq \mu_b}$}\label{subsubsec:minleq_q-1}

Our first observation eliminates lots of the points $(a_1,a_2)\in \red(d)\cap \set{a_2 \leq \mu_b}$ for which the function $L(a_1,a_2)$ does not attain its minimum value. In other words, it does not only say that the minimum is reached at an interior point but it also describes the $a_1$ coordinate of such a point on each horizontal line $y=a_2$ (see Figure \ref{fig:argmin}).

\begin{figure}[h]
    \centering
    \includegraphics[scale=0.80]{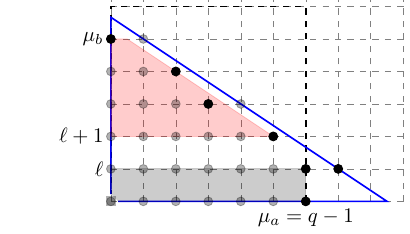}
    \includegraphics[scale=0.70]{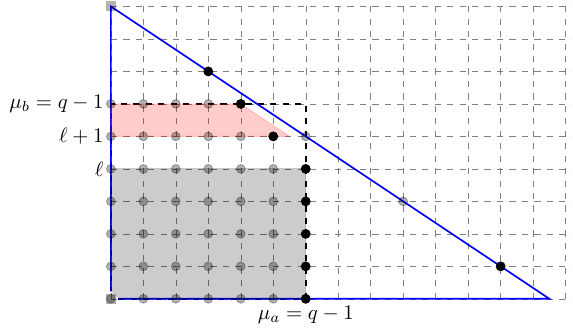}
    \caption{Candidates for the minimum of $L$ on \mbox{$\red(d) \cap \set{a_2 \leq \mu_b}$} by virtue of Lemma \ref{lem:argmin} with $a,b=2,3$, $q=7$ and the degrees $d=17,27$, respectively.}
    \label{fig:argmin}
\end{figure}

\begin{lemma}\label{lem:argmin}
Fix $a_2 \in \set{0,\dots, \mu_b}$. Write $\calX_{a_2}=\set{a_1 \in \N: (a_1,a_2) \in \red(d)}$ and $M_{a_2}=\max \calX_{a_2}$. 

If $aM_{a_2}+ba_2< d$, then the minimum of the univariate function $a_1 \mapsto L(a_1,a_2)$ on $\calX_{a_2}$ is reached exactly at $a_1=M_{a_2}$.

If $aM_{a_2}+ba_2= d$, then it is reached at
\[\underset{a_1 \in \calX_{a_2}}{\argmin} \: L(\cdot,a_2) =
\begin{cases}
\set{\min\set{M_{a_2},q}-1,M_{a_2}} & \text{if } a_2=0,1 \text{ or } a=1,\\
\set{\min\set{M_{a_2},q}-1}     & otherwise.
\end{cases}
\]
\end{lemma}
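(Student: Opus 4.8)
The plan is to analyze the univariate function $a_1 \mapsto L(a_1,a_2)$ for a fixed horizontal level $a_2 \in \set{0,\dots,\mu_b}$ using the explicit formulas from Proposition \ref{prop:func_L}. The key structural fact is that along this level, the points $(a_1,a_2)\in\red(d)$ form a contiguous segment of integers $a_1$ ranging from a minimum value up to $M_{a_2}=\max\calX_{a_2}$, and that \emph{at most one} of these points lies on the hypotenuse $aa_1+ba_2=d$ (namely $a_1=M_{a_2}$, and only when $aM_{a_2}+ba_2=d$ exactly). Hence the first formula $(q-a_1)(q-a_2)$ applies to every point with $aa_1+ba_2<d$, while the second (hypotenuse) formula applies only possibly at $a_1=M_{a_2}$.

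First I would handle the case $aM_{a_2}+ba_2<d$, where every point on the level uses the formula $L(a_1,a_2)=(q-a_1)(q-a_2)$. Since $a_2\le\mu_b\le q-1$, the factor $(q-a_2)$ is a fixed positive constant, so $L$ is strictly decreasing in $a_1$ on the relevant range $0\le a_1\le M_{a_2}\le q-1$; thus the minimum is attained uniquely at $a_1=M_{a_2}$, giving the first assertion. The more delicate case is $aM_{a_2}+ba_2=d$, where I must compare the hypotenuse value at $a_1=M_{a_2}$,
\begin{equation*}
L(M_{a_2},a_2)=\max\set{q-M_{a_2},0}\max\set{q-a_2,0}+q-1-\Floorfrac{a_2-1}{a},
\end{equation*}
against the interior value at the next point to the left. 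The relevant interior competitor is $a_1=M_{a_2}-1$ (still on the same level, now strictly inside the polygon since we moved off the hypotenuse), with $L(M_{a_2}-1,a_2)=(q-(M_{a_2}-1))(q-a_2)$, unless $M_{a_2}-1\ge q$ in which case the leftmost interior point with $a_1=q-1$ becomes relevant; this is exactly the origin of the $\min\set{M_{a_2},q}-1$ expression in the claim.

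The heart of the proof is the comparison computation deciding whether the argmin is the single point $\set{\min\set{M_{a_2},q}-1}$ or the pair $\set{\min\set{M_{a_2},q}-1,M_{a_2}}$. I would substitute $q-M_{a_2}$ and the value of $\Floorfrac{a_2-1}{a}$ into the difference $L(M_{a_2},a_2)-L(\min\set{M_{a_2},q}-1,a_2)$ and determine its sign. The key arithmetic observation is that the correction term $q-1-\Floorfrac{a_2-1}{a}$ on the hypotenuse exactly compensates the loss of one factor in the product precisely when $a_2\in\set{0,1}$ (so that $\Floorfrac{a_2-1}{a}$ is maximally small, i.e.\ $-1$ or $0$) or when $a=1$ (so that $\Floorfrac{a_2-1}{a}=a_2-1$ makes the correction equal to $q-a_2$, matching the product-factor exactly); in those regimes equality $L(M_{a_2},a_2)=L(\min\set{M_{a_2},q}-1,a_2)$ holds and both points are minimizers, whereas otherwise the hypotenuse value is strictly larger and only the interior point survives. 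The main obstacle I anticipate is bookkeeping the floor term $\Floorfrac{a_2-1}{a}$ and correctly treating the boundary subtleties when $M_{a_2}\ge q$ (so the $\max\set{\cdot,0}$ truncations and the clamp $\min\set{M_{a_2},q}$ kick in); I would dispatch these by splitting into the subcases $M_{a_2}\le q-1$, $M_{a_2}=q$, and $M_{a_2}\ge q+1$, checking the sign of the difference in each and verifying it aligns with the stated case distinction.
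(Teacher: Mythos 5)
Your proposal is correct and follows essentially the same route as the paper: off the hypotenuse the formula $(q-a_1)(q-a_2)$ is strictly decreasing in $a_1$, and on the hypotenuse one compares $L(M_{a_2},a_2)$ with $L(\min\set{M_{a_2},q}-1,a_2)$, the difference being $a_2-1-\Floorfrac{a_2-1}{a}=\Ceil{\frac{(a-1)(a_2-1)}{a}}$, which vanishes exactly when $a=1$ or $a_2\in\set{0,1}$. The only slip is the initial claim that $\calX_{a_2}$ is contiguous (it is $\set{0,\dots,q-1}\cup\set{M_{a_2}}$ when $M_{a_2}>q$), but your later subcase split on $M_{a_2}$ already accounts for this.
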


\begin{proof}
If $aM_{a_2}+ba_2\neq d$, then $(M_{a_2},a_2) \notin H(d)$ which means that $M_{a_2} \leq q-1$. Therefore the function $a_1 \mapsto L(a_1,a_2)$ is defined by $L(a_1,a_2)=(q-a_1)(q-a_2)$ on the set $\calX_{a_2}$ by Proposition \ref{prop:func_L}. It is thus strictly decreasing with respect to $a_1$.

\medskip

Otherwise, $aM_{a_2}+ba_2= d$. Then $M_{a_2} \geq 1$ (as $a_2 \leq \mu_b$) and it is clear that the function $a_1 \mapsto L(a_1,a_2)$ is strictly decreasing on $\calX_{a_2} \setminus \set{M_{a_2}}$ and we have two cases:
\begin{enumerate}
\item $M_{a_2} \leq q$, which implies that $\calX_{a_2}=\set{0,\dots,M_{a_2}}$.
\item $M_{a_2} > q$, which corresponds to $\calX_{a_2}=\set{0,\dots,q-1}\cup\set{M_{a_2}}$.
\end{enumerate}

In the first case, it remains to compare 
\[L(M_{a_2},a_2) = (q-M_{a_2})(q-a_2) +q - 1 - \Floor{\frac{a_2-1}{a}} \]
and 
$L(M_{a_2}-1,a_2)=(q-M_{a_2}+1)(q-a_2)=(q-M_{a_2})(q-a_2)+q-a_2$. Computing the difference, we get
\[\begin{aligned}
L(M_{a_2},a_2)-L(M_{a_2}-1,a_2) &= -1 - \Floor{\frac{a_2-1}{a}}+a_2\\
                        &= \Ceil{\frac{(a-1)(a_2-1)}{a}}\geq 0
\end{aligned}\]

In the second case, we just have to compare $L(M_{a_2},a_2)=q- 1 - \Floor{\frac{a_2-1}{a}}$ and $L(q-1,a_2)=q-a_2$. Their difference also equals to
\[
L(M_{a_2},a_2)-L(q-1,a_2) = \Ceil{\frac{(a-1)(a_2-1)}{a}}.
\]

In both cases, the minimum of the function $a_1 \mapsto L(a_1,a_2)$ on $\calX_{a_2}$ is thus reached by $\min\set{M_{a_2},q}-1$, and also by $M_{a_2}$ if $a_2 \in \{0,1\}$ or $a=1$.
\end{proof}

Let us define the following univariate function:
\begin{equation}\label{eq:tildeL}
a_2 \mapsto \tilde{L}(a_2)=L\left(\min\set{\Floor{\frac{d-1-ba_2}{a}},q-1},a_2\right)
\end{equation}

\begin{corollary}\label{cor:min_L_and_tildeL}Under the hypothesis above, we have
\[\min_{\substack{(a_1,a_2) \in \red(d)\\ a_2 \leq \mu_b}} L(a_1,a_2)= \min_{a_2 \in \set{0,\dots,\mu_b}} \tilde{L}(a_2) \] 
\end{corollary}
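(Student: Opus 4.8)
The plan is to reduce the two-dimensional minimization over $\red(d) \cap \set{a_2 \leq \mu_b}$ to a one-dimensional minimization over the horizontal coordinate $a_2 \in \set{0,\dots,\mu_b}$ by invoking Lemma \ref{lem:argmin} on each horizontal slice. For a fixed $a_2$, Lemma \ref{lem:argmin} identifies the value of $a_1 \in \calX_{a_2}$ at which the slice-minimum of $L(\cdot,a_2)$ is attained, so the global minimum over the region is just the minimum over $a_2$ of these slice-minima.

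First I would partition the region by writing
\[
\min_{\substack{(a_1,a_2) \in \red(d)\\ a_2 \leq \mu_b}} L(a_1,a_2)
= \min_{a_2 \in \set{0,\dots,\mu_b}} \left( \min_{a_1 \in \calX_{a_2}} L(a_1,a_2)\right),
\]
which is valid since $\red(d) \cap \set{a_2 \leq \mu_b}$ decomposes as the disjoint union of its slices $\calX_{a_2} \times \set{a_2}$. Then I would apply Lemma \ref{lem:argmin} to evaluate the inner minimum. There are two cases according to whether $aM_{a_2}+ba_2 < d$ or $aM_{a_2}+ba_2 = d$, but in both cases the lemma guarantees that $a_1 = \min\set{M_{a_2},q}-1$ lies in the argmin set when $aM_{a_2}+ba_2=d$, and $a_1 = M_{a_2}$ is the unique minimizer when $aM_{a_2}+ba_2 < d$. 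The key observation unifying both cases is that $M_{a_2} = \max \calX_{a_2} = \Floor{\frac{d-1-ba_2}{a}}$ whenever $aM_{a_2}+ba_2 \neq d$, whereas when $aM_{a_2}+ba_2=d$ the minimizing value $\min\set{M_{a_2},q}-1$ coincides with $\min\set{\Floor{\frac{d-1-ba_2}{a}},q-1}$, precisely the argument fed into $\tilde{L}$ in Equation \eqref{eq:tildeL}.

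The crux is therefore to check that in each case the slice-minimum $\min_{a_1 \in \calX_{a_2}} L(a_1,a_2)$ equals $\tilde{L}(a_2) = L\left(\min\set{\Floor{\frac{d-1-ba_2}{a}},q-1},a_2\right)$. When $aM_{a_2}+ba_2 < d$, since $(M_{a_2},a_2) \notin H(d)$ we have $M_{a_2} \leq q-1$, so $M_{a_2} = \Floor{\frac{d-1-ba_2}{a}} = \min\set{\Floor{\frac{d-1-ba_2}{a}},q-1}$ and the slice-minimum $L(M_{a_2},a_2)$ matches $\tilde{L}(a_2)$ exactly. When $aM_{a_2}+ba_2 = d$, one verifies $\Floor{\frac{d-1-ba_2}{a}} = M_{a_2}-1$ (because $d - 1 - ba_2 = aM_{a_2}-1$), so $\min\set{M_{a_2},q}-1$ agrees with $\min\set{\Floor{\frac{d-1-ba_2}{a}},q-1}$ after a short comparison of the floor expressions, again identifying the minimizing argument with that of $\tilde{L}$.

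I expect the main obstacle to be the bookkeeping in the boundary case $aM_{a_2}+ba_2=d$, namely confirming that $\Floor{\frac{d-1-ba_2}{a}}$ and $\min\set{M_{a_2},q}-1$ name the same lattice point and that this point genuinely lies in $\calX_{a_2}$; this requires carefully tracking whether $M_{a_2} \leq q$ or $M_{a_2} > q$ as in the two subcases of Lemma \ref{lem:argmin}. Once the argument of $\tilde{L}$ is matched with the minimizer supplied by Lemma \ref{lem:argmin} in every slice, the corollary follows immediately by taking the outer minimum over $a_2 \in \set{0,\dots,\mu_b}$.
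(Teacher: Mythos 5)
Your proposal is correct and follows essentially the same route as the paper: the paper's proof likewise reduces to Lemma \ref{lem:argmin} slice by slice, observing that $\min\set{\Floor{\frac{d-1-ba_2}{a}},q-1}$ equals $M_{a_2}$ when $aM_{a_2}+ba_2<d$ and $\min\set{M_{a_2},q}-1$ when $aM_{a_2}+ba_2=d$. Your verification that $\Floor{\frac{d-1-ba_2}{a}}=M_{a_2}-1$ in the boundary case is exactly the computation the paper leaves implicit.
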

\begin{proof}
This directly follows from Lemma \ref{lem:argmin}, noticing that 
\[
\min\set{\Floor{\frac{d-1-ba_2}{a}},q-1}=\begin{cases}
    M_{a_2} & \text{if } aM_{a_2}+ba_2< d, \\
    \min\set{M_{a_2},q}-1 &  \text{if } aM_{a_2}+ba_2= d,
\end{cases}
\]
completing the proof. \end{proof}

Recall the number $\alpha_2=\Floor{\frac{d-1-a(q-1)}{b}}$ as described in Remark \ref{rem:cases}. To obtain the minimum value of the function $L(a_1,a_2)$ on $\red(d)\cap \set{a_2 \leq \mu_b}$, let us study how the univariate function $\tilde{L}$ varies on $\set{0,\dots,\mu_b}$.

\begin{remark}\label{rem:symmetry}
The hypothesis $a \leq b$ implies that if $(a_1,a_2) \in \red(d) \setminus H(d)$ with $a_2 > a_1$, then so does $(a_2,a_1)$. Moreover, the function $L$ defined in Proposition \ref{prop:func_L} is clearly symmetric on the interior of $P_d$, \textit{i.e.} $L(a_1,a_2)=L(a_2,a_1)$ if both $(a_1,a_2)$ and $(a_2,a_1)$ belong to $\red(d) \setminus H(d)$. Therefore, when investigating for the minimum of $L$ on $\red(d) \setminus H(d)$, we can restrict to the subset where $a_2 \leq a_1$.  
\end{remark}

By Remark \ref{rem:symmetry}, it is enough to study $\tilde{L}$ on $\set{0,\dots,\Floor{\frac{d-1}{a+b}}}$. 
Let us make the form of $\tilde{L}$ explicit on $\set{0,\dots,\Floor{\frac{d-1}{a+b}}}$.

\begin{lemma}\label{lem:formula_tildeL}

On the set $\set{0,\dots,\Floor{\frac{d-1}{a+b}}}$, the function $\tilde{L}$ defined in \eqref{eq:tildeL} is given as follows.
    \begin{enumerate}
    \item\label{it:formula_tildeL1} If $d \leq a(q-1)$ (\textit{i.e.} $\alpha_2 < 0$), then 
\[
\tilde{L}(a_2)=  \left(q-\Floor{\frac{d-1-ba_2}{a}}\right)(q-a_2).
\]
    \item\label{it:formula_tildeL2} If $d > a(q-1)$ (\textit{i.e.} $\alpha_2 \geq 0$), then 
\[
\tilde{L}(a_2)=\begin{cases}
    q-a_2 & \text{if } a_2 \leq \alpha_2, \\
    \left(q-\Floor{\frac{d-1-ba_2}{a}}\right)(q-a_2) &\text{otherwise.}
\end{cases}
\]
\end{enumerate}
\end{lemma}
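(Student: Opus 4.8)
The function $\tilde{L}$ defined in \eqref{eq:tildeL} is the composition of $L$ with the argmin points identified in Lemma~\ref{lem:argmin}; by Corollary~\ref{cor:min_L_and_tildeL} the relevant input is $\min\set{\Floor{\frac{d-1-ba_2}{a}},q-1}$. The plan is to unwind this definition case by case according to which branch of Proposition~\ref{prop:func_L} applies, while carefully tracking, via Remark~\ref{rem:cases}, whether the point $(a_1,a_2)$ we land on lies in the interior $\red(d)\setminus H(d)$ (so $aa_1+ba_2<d$) or on the hypotenuse $H(d)$ (so $aa_1+ba_2=d$). The whole argument is a bookkeeping exercise: there is no deep step, only the need to verify that the arithmetic of floors and the sign of $\alpha_2$ partition $\set{0,\dots,\Floor{\frac{d-1}{a+b}}}$ exactly as claimed.

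\textbf{First step.} I would determine, for a fixed $a_2$ in the range $\set{0,\dots,\Floor{\frac{d-1}{a+b}}}$, which of the two quantities $\Floor{\frac{d-1-ba_2}{a}}$ and $q-1$ is the minimum, since this dictates both the value $a_1$ plugged into $L$ and whether we are on the hypotenuse. The key inequality is that $\Floor{\frac{d-1-ba_2}{a}}\ge q-1$ is equivalent to $a(q-1)+ba_2\le d-1$, i.e. to $a_2\le \alpha_2$ by the definition $\alpha_2=\Floor{\frac{d-1-a(q-1)}{b}}$. Thus $a_2\le \alpha_2$ forces $M_{a_2}\ge q$ (or $M_{a_2}=q-1$ on the edge case), which is precisely the regime where the minimizing $a_1$ from Lemma~\ref{lem:argmin} equals $q-1$ and $(a_1,a_2)$ sits in the interior, giving $L(q-1,a_2)=(q-(q-1))(q-a_2)=q-a_2$. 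When $a_2>\alpha_2$, instead $\min\set{\Floor{\frac{d-1-ba_2}{a}},q-1}=\Floor{\frac{d-1-ba_2}{a}}\le q-1$, so the minimizing $a_1$ is $\Floor{\frac{d-1-ba_2}{a}}$ and we again land in the interior with $aa_1+ba_2<d$, yielding $L=\left(q-\Floor{\frac{d-1-ba_2}{a}}\right)(q-a_2)$ from the first branch of Proposition~\ref{prop:func_L}.

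\textbf{Assembling the two cases.} For part~\eqref{it:formula_tildeL1}, the hypothesis $d\le a(q-1)$ gives $\alpha_2<0$ (Remark~\ref{rem:cases}), so every $a_2\ge 0$ falls in the regime $a_2>\alpha_2$ and the single formula $\tilde{L}(a_2)=\left(q-\Floor{\frac{d-1-ba_2}{a}}\right)(q-a_2)$ holds throughout. For part~\eqref{it:formula_tildeL2}, the hypothesis $d>a(q-1)$ gives $\alpha_2\ge 0$, and the two regimes $a_2\le\alpha_2$ and $a_2>\alpha_2$ produce exactly the piecewise formula stated. The one point requiring genuine care is the boundary $a_2=\alpha_2$ together with the possibility that the minimizing $a_1$ lands \emph{on} the hypotenuse (when $a\mid(d-ba_2)$), so that the second or third branch of Proposition~\ref{prop:func_L} would seem to apply; I would check that on the restricted range $a_2\le\Floor{\frac{d-1}{a+b}}$ one has $a_2\le a_1$, and that the $\argmin$ in Lemma~\ref{lem:argmin} selects $\min\set{M_{a_2},q}-1$, which lies strictly inside the triangle and hence is evaluated by the interior formula $(q-a_1)(q-a_2)$.

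\textbf{Main obstacle.} The delicate part is not the interior computation but confirming that at the argmin we never accidentally invoke the hypotenuse branches of $L$: Lemma~\ref{lem:argmin} shows the minimum over a horizontal line is attained at $a_1=\min\set{M_{a_2},q}-1$ precisely when $aM_{a_2}+ba_2=d$, and I must verify that plugging this $a_1$ back gives $aa_1+ba_2<d$ so that the clean product formula applies, matching the exponent $\Floor{\frac{d-1-ba_2}{a}}$ appearing in \eqref{eq:tildeL}. This compatibility check between the $\argmin$ of Lemma~\ref{lem:argmin} and the floor expression in the definition of $\tilde{L}$ is the sole step where a sign error or off-by-one in the floor could break the stated formula, so that is where I would concentrate the verification.
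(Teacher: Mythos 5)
Your proposal is correct and follows essentially the same route as the paper: both hinge on the equivalence $\Floor{\frac{d-1-ba_2}{a}}\geq q-1 \iff a_2\leq\alpha_2$ to decide which argument the minimum in \eqref{eq:tildeL} selects, and on the observation that the resulting point always satisfies $aa_1+ba_2\leq d-1<d$, so only the first (interior) branch of Proposition~\ref{prop:func_L} is ever invoked. The detour through Lemma~\ref{lem:argmin} and the argmin structure is not needed to evaluate $\tilde{L}$ as defined, but it does no harm and your verification that the hypotenuse branches never fire is exactly the right compatibility check.
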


\begin{proof} The first description follows from Proposition \ref{prop:func_L} and Equation \eqref{eq:tildeL} by the virtue of the observation that $ \Floor{\frac{d-1-ba_2}{a}}\leq  \Floor{\frac{d}{a}}\leq q-1$. The second part follows similarly, by noticing that

\begin{equation}\label{eq:cdt_a2_tildeL}
    \Floor{\frac{d-1-ba_2}{a}}\geq q-1 \: \Longleftrightarrow \: a_2 \leq \Floor{\frac{d-1-a(q-1)}{b}}=\alpha_2,
\end{equation}
finishing the proof.
\end{proof}

\begin{lemma}\label{lem:tildeL_variation}
If $\tilde{L}(a_2)= \left(q-\Floor{\frac{d-1-ba_2}{a}}\right)(q-a_2)$ as in Lemma \ref{lem:formula_tildeL}, then it is strictly increasing on the set $\set{\max\set{0,\alpha_2},\dots,\Floor{\frac{d-1}{a+b}}}$. 
\end{lemma}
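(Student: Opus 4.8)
The goal is to show strict monotonicity of the function
\[
\tilde{L}(a_2)= \left(q-\Floor{\tfrac{d-1-ba_2}{a}}\right)(q-a_2)
\]
on the integer interval $\set{\max\set{0,\alpha_2},\dots,\Floor{\frac{d-1}{a+b}}}$. The plan is to examine the sign of the forward difference $\tilde{L}(a_2+1)-\tilde{L}(a_2)$ for consecutive integers $a_2$ in this range and show it is strictly positive. Writing $A(a_2)=q-\Floor{\frac{d-1-ba_2}{a}}$ and $B(a_2)=q-a_2$, I would first record how each factor changes as $a_2$ increments by one: the factor $B$ simply decreases by $1$, while $A$ increases by the nonnegative integer $\delta := \Floor{\frac{d-1-ba_2}{a}}-\Floor{\frac{d-1-b(a_2+1)}{a}}$, which measures how many multiples of $a$ the numerator crosses when we subtract $b$.

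\textbf{Key steps.} The range of $a_2$ is crucial: on $\set{\max\set{0,\alpha_2},\dots,\Floor{\frac{d-1}{a+b}}}$ we are below the symmetry line (so $a_2 \le a_1$, equivalently $a_2 \le \Floor{\frac{d-1}{a+b}}$) and above $\alpha_2$, so that by Equation~\eqref{eq:cdt_a2_tildeL} we have $\Floor{\frac{d-1-ba_2}{a}}\le q-1$, forcing $A(a_2)\ge 1$. Simultaneously $a_2 \le \Floor{\frac{d-1}{a+b}}$ gives $ba_2 \le d-1-aa_2$, hence $\Floor{\frac{d-1-ba_2}{a}}\ge a_2$, which yields $B(a_2)=q-a_2 \ge q-\Floor{\frac{d-1-ba_2}{a}}=A(a_2)\ge 1$; in particular both factors are at least $1$ throughout the range, and $B(a_2)\ge A(a_2)$. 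I would then compute
\[
\tilde{L}(a_2+1)-\tilde{L}(a_2)=(A+\delta)(B-1)-AB = \delta B - \delta - A = \delta(B-1)-A.
\]
The verification splits on whether $\delta=0$ or $\delta \ge 1$. When $\delta \ge 1$, using $B \ge A+1$ (from $B \ge A$ just above, in fact the strict bound comes from the range) one gets $\delta(B-1)-A \ge (B-1)-A \ge 0$, and I would argue strictness by checking that equality $B=A$ together with $\delta=1$ cannot simultaneously give a non-strict increase, pushing the estimate to be strictly positive. The genuinely delicate case is $\delta=0$: then the difference is exactly $-A(a_2)<0$, which would contradict the claim, so I expect the main obstacle to be ruling this out. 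The resolution must use arithmetic of the floor function together with the precise endpoints of the range.

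\textbf{The main obstacle.} The heart of the matter is showing $\delta \ge 1$ cannot fail to produce a strict increase, and more importantly controlling the interplay between the two competing effects: $A$ increases in jumps governed by divisibility of the numerator by $a$, while $B$ decreases steadily. The cleanest route is to rewrite $\tilde{L}(a_2)=A(a_2)B(a_2)$ and bound $A(a_2+1)\ge A(a_2)+\lceil b/a\rceil$ or a comparable floor estimate; since $b\ge a$ we have that subtracting $b$ from the numerator drops the floor by at least $\Floor{b/a}\ge 1$, guaranteeing $\delta \ge 1$ and eliminating the problematic $\delta=0$ case entirely. With $\delta \ge 1$ secured, the difference $\delta(B-1)-A \ge (B-1)-A$ is nonnegative because $B \ge A$, and I would extract strict positivity from the observation that $a_2$ stays strictly below $\Floor{\frac{d-1}{a+b}}$ at each step where we compare (so the inequality $B > A$ is strict in the interior), or alternatively by noting $\delta \ge \Floor{b/a}$ can be boosted when $a<b$. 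I would close by verifying the boundary increment at $a_2=\Floor{\frac{d-1}{a+b}}-1$ separately if the generic strict bound degenerates there, completing the proof of strict monotonicity.
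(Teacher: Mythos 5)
Your setup matches the paper's: you form the difference of consecutive values, split the change into the floor increment $\delta$ and the unit drop of $B=q-a_2$, and correctly secure $\delta\geq\Floor{b/a}\geq 1$ by superadditivity of the floor (the paper uses the equivalent bound $\Floor{x/a}-\Floor{y/a}\geq\Floor{(x-y-1)/a}$, giving $\delta\geq\Floor{(b-1)/a}$, and treats $a=1$ by an exact computation). But the final step has a genuine gap. From $a_2\leq\Floorfrac{d-1}{a+b}$ you only get $A(a_2)\leq B(a_2)$, and then $\delta(B-1)-A\geq (B-1)-A\geq -1$: with $\delta=1$ and $A=B$ your estimate does not even rule out a strict \emph{decrease}. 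The inequality $B\geq A+1$ that you invoke is asserted, not proved ("the strict bound comes from the range"), and even granting it you land at $\geq 0$, with strictness deferred to an unexecuted case check. Since the whole point of the lemma is strict monotonicity, this is the step that must be nailed down.

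The missing idea is to apply the range constraint at the \emph{upper} point of each comparison. Since $a_2+1$ also lies in $\set{\max\set{0,\alpha_2},\dots,\Floorfrac{d-1}{a+b}}$, Equation \eqref{eq:cdt_a2_tildeL}-type reasoning gives $\Floor{\frac{d-1-b(a_2+1)}{a}}\geq a_2+1$, i.e. $A(a_2+1)\leq B(a_2+1)$, which in your notation reads $A(a_2)+\delta\leq B(a_2)-1$. Substituting, $\delta(B-1)-A\geq \delta(B-1)-(B-1-\delta)=(\delta-1)(B-1)+\delta\geq 1>0$, which is exactly the paper's chain $(q-a_2+1)\left(\Floor{\frac{b-1}{a}}-1\right)+1>0$. (Equivalently, one can show directly that $A(a_2)\leq B(a_2)-2$ whenever $a_2\leq\Floorfrac{d-1}{a+b}-1$, since then $d-1-ba_2\geq a(a_2+1)+b\geq a(a_2+2)$.) With that one line added, your argument closes and coincides with the paper's.
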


\begin{proof}
For all the values of $a_2$ satisfying 
\[\max\set{0,\Floor{\frac{d-1-a(q-1)}{b}}} + 1 \leq a_2 \leq \min\set{\Floor{\frac{d-1}{a+b}},q-1},
\] 
the difference $\tilde{L}(a_2) - \tilde{L}(a_2-1)$ is
\[\begin{aligned}
  (q-a_2+1)\left( \Floor{\frac{d-1-b(a_2-1)}{a}} - \Floor{\frac{d-1-ba_2}{a}} \right) - q + \Floor{\frac{d-1-ba_2}{a}}.
\end{aligned}\]
If $a=1$, the expression above boils down to
\[\tilde{L}(a_2) - \tilde{L}(a_2-1)=d+(q+1)(b-1)-2a_2b\]
which is positive because $a_2 \leq \Floorfrac{d-1}{b+1}$ and $a_2 \leq q-1$.

If $a \neq 1$, note that  for all $ x,y \in \Z$, we have
\begin{equation}\label{eq:diff-floor}
 \Floor{\frac{x}{a}}-\Floor{\frac{y}{a}}=\Floor{\frac{x}{a}} - \Ceil{\frac{y+1}{a}}+1 \geq \Floor{\frac{x-y-1}{a}}.
\end{equation}
Applying this to $x=d-1-b(a_2-1)$ and $y=d-1-ba_2$, we get 
\[\tilde{L}(a_2) - \tilde{L}(a_2-1) \geq (q-a_2+1)\Floor{\frac{b-1}{a}} - q + \Floor{\frac{d-1-ba_2}{a}}.\]
Since we have $a_2 \leq \Floor{\frac{d-1}{a+b}}$, which is tantamount to $\Floor{\frac{d-1-ba_2}{a}} \geq a_2$, we have 
\[\tilde{L}(a_2) - \tilde{L}(a_2-1) \geq (q-a_2+1)\left(\Floor{\frac{b-1}{a}} -1\right) + 1,\]
which is positive since $q> a_2$ and $b >a$.
\end{proof}
\begin{proposition}\label{prop:cases1and2}
The minimum value of $\tilde{L}$ on $\set{0,\dots, \mu_b}$ is
\[ \tilde{L}(\ell)= \begin{cases}
    q(q-\Floor{\frac{d-1}{a}}) & \text{if } d\le a(q-1),\\
    q-\ell     & \text{if } a(q-1)<d.
\end{cases}\]
\end{proposition}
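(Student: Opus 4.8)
The plan is to read off the minimum of $\tilde{L}$ directly from the piecewise description of Lemma \ref{lem:formula_tildeL} and the monotonicity statement of Lemma \ref{lem:tildeL_variation}, organizing the argument according to the regimes of Remark \ref{rem:cases}. Before anything else I would invoke the symmetry reduction of Remark \ref{rem:symmetry} to replace the search over $\set{0,\dots,\mu_b}$ by the search over $\set{0,\dots,\Floor{\frac{d-1}{a+b}}}$. The point is that for any $a_2>\Floor{\frac{d-1}{a+b}}$ the minimizing abscissa $a_1^{\ast}=\min\set{\Floor{\frac{d-1-ba_2}{a}},q-1}$ satisfies $a_1^{\ast}<a_2$ (since $(a+b)a_2>d-1$ forces $\Floor{\frac{d-1-ba_2}{a}}<a_2$), the point $(a_1^{\ast},a_2)$ lies in $\red(d)\setminus H(d)$, and hence $\tilde{L}(a_1^{\ast})\le L(a_2,a_1^{\ast})=L(a_1^{\ast},a_2)=\tilde{L}(a_2)$; iterating pushes the minimum down to ordinates at most $\Floor{\frac{d-1}{a+b}}$, where Lemma \ref{lem:formula_tildeL} is valid. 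I would also record that $\ell\le \Floor{\frac{d-1}{a+b}}$, so that $\tilde{L}(\ell)$ genuinely lies in the reduced range: this is trivial when $\ell=0$, and when $\ell=\alpha_2$ it follows from $\alpha_2\le q-2$, which gives $\alpha_2(a+b)\le d-1-a<d-1$.

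For $d\le a(q-1)$ we have $\alpha_2<0$ and $\ell=0$, and Lemma \ref{lem:formula_tildeL}(\ref{it:formula_tildeL1}) gives $\tilde{L}(a_2)=\left(q-\Floor{\frac{d-1-ba_2}{a}}\right)(q-a_2)$ on the whole reduced range. Since $\max\set{0,\alpha_2}=0$, Lemma \ref{lem:tildeL_variation} makes $\tilde{L}$ strictly increasing, so the minimum occurs at $a_2=0$. Using $\Floor{\frac{d-1}{a}}\le q-1$ (so that the minimizing abscissa on the line $a_2=0$ is $\Floor{\frac{d-1}{a}}$ itself), this evaluates to $\tilde{L}(0)=q\left(q-\Floor{\frac{d-1}{a}}\right)$, which is the first branch of the claim.

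For $d>a(q-1)$ we have $\alpha_2\ge 0$ and $\ell=\min\set{q-1,\alpha_2}$. By Lemma \ref{lem:formula_tildeL}(\ref{it:formula_tildeL2}) the function $\tilde{L}(a_2)=q-a_2$ is strictly decreasing on $\set{0,\dots,\alpha_2}$, while on $\set{\alpha_2+1,\dots,\Floor{\frac{d-1}{a+b}}}$ (when this set is nonempty) Lemma \ref{lem:tildeL_variation} makes it strictly increasing; thus the only candidate minimizers are $\ell$ and possibly $\ell+1$. If $d>(q-1)(a+b)$, then $\alpha_2\ge q-1$, so the entire domain $\set{0,\dots,\mu_b}=\set{0,\dots,q-1}$ falls in the decreasing piece and the minimum is $\tilde{L}(q-1)=1=q-\ell$ (in agreement with Proposition \ref{prop:trivialcase}). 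If $a(q-1)<d\le(q-1)(a+b)$, then $\ell=\alpha_2\le q-2$ and it remains to compare the two valleys, i.e.\ to show $\tilde{L}(\alpha_2)=q-\alpha_2\le\tilde{L}(\alpha_2+1)$. Setting $k=\Floor{\frac{d-1-b(\alpha_2+1)}{a}}$, the equivalence \eqref{eq:cdt_a2_tildeL} gives $k\le q-2$, while $\alpha_2+1\le\Floor{\frac{d-1}{a+b}}$ gives $k\ge\alpha_2+1$; hence $\tilde{L}(\alpha_2+1)=(q-k)(q-\alpha_2-1)\ge 2(q-\alpha_2-1)\ge q-\alpha_2$, the final inequality being exactly $\alpha_2\le q-2$. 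Therefore the minimum is $\tilde{L}(\ell)=q-\ell$, which is the second branch.

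The genuinely delicate point is the junction comparison at $a_2=\alpha_2$ together with the bookkeeping needed to apply Remark \ref{rem:symmetry}: one must check that the line-minimum $\tilde{L}(a_2)$ is always realized at an \emph{interior} lattice point of $\red(d)\setminus H(d)$ (so that the reflection $(a_1^{\ast},a_2)\mapsto(a_2,a_1^{\ast})$ stays inside $\red(d)\setminus H(d)$ and preserves $L$), and that $\ell$ takes its two distinct forms $\alpha_2$ and $q-1$ in the two subregimes. Once the minimizer on each horizontal line is identified with the interior point of Lemma \ref{lem:argmin}, the two monotonicity lemmas and the single inequality $\alpha_2\le q-2$ do all the work, and the pieces assemble into the stated formula for $\tilde{L}(\ell)$.
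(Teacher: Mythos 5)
Your proof is correct and follows essentially the same route as the paper's: both rest on the piecewise formula of Lemma \ref{lem:formula_tildeL} and the monotonicity of Lemma \ref{lem:tildeL_variation}, reducing everything to the single junction comparison $\tilde{L}(\ell)\le\tilde{L}(\ell+1)$, which you settle via $q-k\ge 2$ and $\alpha_2\le q-2$ while the paper uses integrality of $\tilde{L}(\ell+1)> q-\ell-1$. The only (harmless) differences are that you spell out the symmetry reduction to $\set{0,\dots,\Floor{\frac{d-1}{a+b}}}$, which the paper dispatches in one line via Remark \ref{rem:symmetry} before Lemma \ref{lem:formula_tildeL}, and that you treat $d>(a+b)(q-1)$ explicitly inside the proof rather than leaving it to Proposition \ref{prop:trivialcase}.
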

\begin{proof}
The assumption $d \leq (a+b)(q-1)$ is equivalent to  $\alpha_2=\Floorfrac{d-1-a(q-1)}{b} \leq q-2$. Therefore, by Remark \ref{rem:cases}, $\ell=\max\set{0,\alpha_2}$.

\begin{enumerate}
    \item If $d \leq a(q-1)$ (i.e. $\alpha_2<0$), then the function $\tilde{L}$ is given by Lemma \ref{lem:formula_tildeL} (\ref{it:formula_tildeL1}) and it is strictly increasing on the whole set $\set{0,\dots,\min\set{\Floor{\frac{d-1}{a+b}},q-1}}$ by Lemma  \ref{lem:tildeL_variation}. Thus we clearly have that $\tilde{L}$ reaches its minimum at $\ell=0$.

\item If $a(q-1) < d$, then $\ell=\alpha_2 \geq 0$. The function $\tilde{L}$ is given by Lemma \ref{lem:formula_tildeL} (\ref{it:formula_tildeL2}). Therefore, it follows from Lemma \ref{lem:tildeL_variation} that we only need to compare $\tilde{L}(\ell)=q-\ell$ with 
\[\tilde{L}(\ell+1)=(q-\ell-1)\left(q- \Floor{\frac{d-1-b(\ell+1)}{a}}\right).\]


By definition of the floor function (for more details about this function see \cite{floorfunc}) and $\ell\leq \frac{d-1-a(q-1)}{b}<\ell+1$, one can easily check that 
\[\Floor{\frac{d-1-b(\ell+1)}{a}} < q-1,\]
so $\tilde{L}(\ell+1) > q-\ell-1.$ Then $\tilde{L}(\ell+1)-\tilde{L}(\ell)\geq (q-\ell) - (q-\ell) = 0$.
\end{enumerate}
The proof is complete as the minimum value is always $\tilde{L}(\ell)$.
\end{proof}

\subsection{Minimum of $L$ over the whole set $\red(d)$} \label{sec:lowerbound}

In this part, we give the lower bound provided by Lemma \ref{lem:mindist} by comparing the minimum value $\Tilde{L}(\ell)$ of $\Tilde{L}$ and the minimum of $L$ outside the domain $a_2 \leq \mu_b$.

\subsubsection{The case where $d<bq$}

\begin{proposition}\label{prop:minL_small_degree}
Assume that $(a,b)\neq(1,1)$. If $d < bq$, then 
\[\min_{\a \in \red(d)} L(\a)=\begin{cases}
    q(q-\Floor{\frac{d-1}{a}}) & \text{if } d\le a(q-1),\\
    q-\ell     & \text{if } a(q-1)<d.
\end{cases} \]
\end{proposition}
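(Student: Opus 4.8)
The plan is to reduce the minimisation over the whole reduction $\red(d)$ to the one already performed over the strip $\set{a_2 \leq \mu_b}$ in \textsection\ref{subsubsec:minleq_q-1}, and then to control the unique lattice point of $\red(d)$ that escapes this strip. First note that, since $a \leq b$ are coprime and $(a,b) \neq (1,1)$, we have $a < b$. I would dispose of the range $d > (a+b)(q-1)$ at once: there $b \nmid d$ (indeed, if $b \mid d$ and $d > (a+b)(q-1)$, then writing $d = bm$ forces $m \geq q$, hence $d \geq bq$, contrary to $d < bq$), so no corner point occurs, Proposition \ref{prop:trivialcase} gives $\min_{\a \in \red(d)} L(\a) = 1$, and the right--hand side also equals $q - \ell = 1$ because $\ell = q-1$ in this regime by Remark \ref{rem:cases}. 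From now on I therefore assume $d \leq (a+b)(q-1)$.

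Under this assumption the discussion preceding Proposition \ref{prop:func_L} shows that every $\a \in \red(d)$ satisfies $a_2 \leq \mu_b$, with the single exception of the corner $(0,d/b)$, which appears exactly when $b \mid d$. Combining Corollary \ref{cor:min_L_and_tildeL} with Proposition \ref{prop:cases1and2}, the minimum of $L$ over the strip $\set{a_2 \leq \mu_b}$ equals $\tilde{L}(\ell)$, which is precisely the claimed right--hand side. Hence when $b \nmid d$ there is nothing left to prove, and the entire content of the statement is the inequality
\[
L(0,d/b) \geq \tilde{L}(\ell) \qquad \text{when } b \mid d .
\]

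To establish it I would first simplify the corner value. Writing $m = d/b$, the hypothesis $d < bq$ gives $m \leq q-1$, so $\max\set{q - d/b, 0} = q - m$; moreover $\Floor{\frac{d-b}{ab}} = \Floor{\frac{m-1}{a}} \leq m-1 \leq q-2$, so the second maximum in the third case of Proposition \ref{prop:func_L} resolves to $q - \Floorfrac{m-1}{a}$ and
\[
L(0,d/b) = q(q-m) + q - \Floorfrac{m-1}{a}.
\]
If $a(q-1) < d$, then $\tilde{L}(\ell) = q - \ell \leq q \leq q(q-m) \leq L(0,d/b)$ and we are done. The delicate case is $d \leq a(q-1)$, where $\tilde{L}(\ell) = q\bigl(q - \Floor{\tfrac{d-1}{a}}\bigr)$; here I would compute
\[
L(0,d/b) - \tilde{L}(\ell) = q\left(\Floorfrac{d-1}{a} - m + 1\right) - \Floorfrac{m-1}{a}
\]
and show the bracket is at least $1$. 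This is where $a < b$ enters: since $b > a$ and $m \geq 1$ we have $(b-a)m \geq 1$, whence $\Floor{\frac{d-1}{a}} = \Floor{\frac{bm-1}{a}} \geq m$, so the bracket is $\geq 1$ and the difference is at least $q - \Floorfrac{m-1}{a} > 0$.

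The main obstacle I expect is exactly this last floor estimate: one must convert the qualitative fact that corner monomials carry a large projective shadow into the sharp inequality $\Floor{(bm-1)/a} \geq m$, and verify that the excluded case $(a,b)=(1,1)$ is the only one where it degenerates (there $b-a=0$, the bracket drops to $0$, and the corner can genuinely tie the interior minimum). Everything else is bookkeeping: matching the two branches of $\tilde{L}(\ell)$ coming from Proposition \ref{prop:cases1and2} with the two branches of the asserted formula, and confirming that the corner $(0,d/b)$ is indeed the only point of $\red(d)$ lying outside the strip $\set{a_2 \leq \mu_b}$ when $d < bq$.
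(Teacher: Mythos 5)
Your proof is correct and follows essentially the same route as the paper's: reduce to the strip $\set{a_2\le\mu_b}$ via Corollary \ref{cor:min_L_and_tildeL} and Proposition \ref{prop:cases1and2}, then show the corner value $L(0,d/b)$ (present only when $b\mid d$) cannot beat $\tilde{L}(\ell)$. The only differences are cosmetic: you handle $d>(a+b)(q-1)$ explicitly via Proposition \ref{prop:trivialcase}, and you prove the key inequality $\Floor{(d-1)/a}\ge d/b$ in one stroke from $(b-a)m\ge 1$, where the paper splits into the subcases $a\mid d$ and $a\nmid d$.
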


\begin{proof}
If $b \nmid d$, then for every $\a \in \red(d)$, $a_2 \leq \mu_b$ so the statement follows from Corollary \ref{cor:min_L_and_tildeL} and Proposition \ref{prop:cases1and2}.

If $b \mid d$, we have to compare the minimum provided by Proposition \ref{prop:cases1and2} and the value $L(0,\frac{d}{b})$. Since $d < bq$, we have $q-\frac{d}{b}\geq 1$ and Proposition \ref{prop:func_L} states that

$$L\left(0, \frac{d}{b}\right)=q\left(q-\frac{d}{b}\right)+ \max \left\{q- \Floor{\frac{d-b}{ab}},1\right\}.$$
 
\begin{enumerate}[label=\roman*.]
\item If $d\le a(q-1)$,  then $\alpha_2<0$ and so $\ell=0$. It follows that we have  
$$\frac{d-b}{ab}\le \frac{d-b}{a}\le q-1 \text{ and so } q-\Floor{\frac{d-b}{ab}}\ge 1.$$ 

If $a\nmid d$ , then by Lemma \ref{lem:formula_tildeL} $(1)$, we have 
$$\tilde{L}(0)=q\left(q-\Floor{\frac{d-1}{a}}\right)=q\left(q-\Floor{\frac{d}{a}}\right)\le q\left(q-\frac{d}{b}\right)\le L\left(0,\frac{d}{b}\right).$$ 
If $a\mid d$ and $b>1$, then $b\geq a+1$, $d=d_0ab$ for some integer $d_0\geq 1$ and $d/a-1=d_0b-1\geq d_0a=d/b$. Thus, 
by Lemma \ref{lem:formula_tildeL} $(1)$, we have 
$$\tilde{L}(0)=q\left(q-\Floor{\frac{d-1}{a}}\right)=q\left(q-\Floor{\frac{d}{a}}+1\right)\le q\left(q-\frac{d}{b}\right)\le L\left(0,\frac{d}{b}\right).$$  

Therefore, the minimum of $L$ is $\tilde{L}(0)=q(q-\mu_a)$ (unless  $a=b=1$). 
  
\item If $a(q-1)<d$,  then $\ell=\alpha_2 \geq 0$. By Lemma \ref{lem:formula_tildeL} $(2)$, we have
$$\tilde{L}(\ell)=q-\ell\le q< q+1 \leq L\left(0, \frac{d}{b}\right).$$ 
\end{enumerate}
In all cases, the minimum is the one given by Proposition \ref{prop:cases1and2}.
\end{proof}

\begin{remark}\label{rk:a=b=1}
   If $a=b=1$, let us stress out that 
$$\tilde{L}(0)=q(q-d+1)\ge q(q-d)+q-(d-1)= L\left(0,\frac{d}{b}\right),$$
whereas it is well--known that $d_{min}(C_d)=\tilde{L}(0)=q(q-d+1)$. In this case, the bound provided by Lemma \ref{lem:mindist} is not sharp. However, this issue can be overcome using the 3--transitivity of the projective plane $\Pp^2$ (see \cite[Proposition 4.2]{BDG2019}).
\end{remark}


\subsubsection{The case where $qb \leq d \leq (a+b)(q-1)$}

In this case, we have $a<b \leq a(q-1)$, which induces $q \geq 3$.

Let us first deal with the case $a=1$.

\begin{proposition}\label{prop:minL_a=1}
If $a=1$ and $d \geq bq$, then 
\[\min_{\a \in \red(d)} L(\a)=\begin{cases}
1 & \text{if } b \mid d, \\
q - \ell & \text{otherwise.} 
\end{cases} \]
\end{proposition}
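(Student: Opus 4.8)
The plan is to separate $\red(d)$ into the strip $\{a_2 \le \mu_b\}$, where \textsection\ref{subsubsec:minleq_q-1} already controls $L$, and the handful of remaining points with $a_2 > \mu_b$, which I will list explicitly and evaluate directly via Proposition \ref{prop:func_L}. Throughout I work in the standing range $qb \le d \le (a+b)(q-1)$ of this subsection, so that $\mu_b = q-1$ and, since $a=1$ gives $a(q-1)=q-1 < bq \le d$, Remark \ref{rem:cases} yields $\ell = \alpha_2$ with $0 \le \ell \le q-2$.

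First I would apply Corollary \ref{cor:min_L_and_tildeL} together with the second case ($a(q-1) < d$) of Proposition \ref{prop:cases1and2} to obtain
\[
\min_{\substack{\a \in \red(d)\\ a_2 \le \mu_b}} L(\a) = \tilde{L}(\ell) = q - \ell,
\]
and I would record that $q - \ell \ge q - (q-2) = 2$.

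Next I would determine which points of $\red(d)$ lie outside the strip $a_2 \le \mu_b$. Writing $\red(d) = R(d) \cup T(d) \cup H(d)$ as in Lemma \ref{lem:red}, the sets $R(d)$ and $T(d)$ have second coordinate at most $\mu_b$ by definition. For the hypotenuse part with $a=1$, the first interior lattice point is $N_0 = (d-b,1)$, so by Corollary \ref{cor:11b} we have $E_h(d) = \{(d-by,y) : 1 \le y \le \mu_b\}$, all of whose points satisfy $a_2 \le \mu_b$, while the corner $(d/a,0)=(d,0)$ has $a_2 = 0$. Hence the unique point of $\red(d)$ with $a_2 > \mu_b$ is the corner $(0,d/b)$, and it occurs precisely when $b \mid d$.

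Finally I would conclude by cases. If $b \nmid d$, no point leaves the strip, so the minimum of $L$ over $\red(d)$ is $q - \ell$. If $b \mid d$, then $d \ge bq$ forces $\frac{d}{b} \ge q$, and substituting $a=1$, $a_1 = 0$, $a_2 = \frac{d}{b}$ into the third case of Proposition \ref{prop:func_L} gives
\[
L\!\left(0,\tfrac{d}{b}\right) = q\cdot\max\left\{q-\tfrac{d}{b},\,0\right\} + \max\left\{q-\tfrac{d}{b}+1,\,1\right\} = 0 + 1 = 1;
\]
comparing with the strip minimum $q-\ell \ge 2$, the overall minimum is $1$. The only delicate point is the bookkeeping in the third step, namely confirming that for $a=1$ the hypotenuse reduction $E_h(d)$ and the corner $(d,0)$ never leave the strip $a_2 \le \mu_b$, isolating $(0,d/b)$ as the sole exceptional point; everything else is a direct substitution into the formulas of Proposition \ref{prop:func_L}.
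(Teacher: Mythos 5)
Your proposal is correct and follows essentially the same route as the paper's proof: reduce to the strip $a_2\le\mu_b$ via Proposition \ref{prop:cases1and2}, observe that for $a=1$ the only point of $\red(d)$ outside that strip is the corner $(0,d/b)$ (present exactly when $b\mid d$), and evaluate $L(0,d/b)=1$ using the third case of Proposition \ref{prop:func_L} together with $\Floorfrac{d-b}{ab}=\frac{d}{b}-1\ge q-1$. The only difference is that you spell out the bookkeeping (the explicit description of $E_h(d)$ and the bound $q-\ell\ge 2$) that the paper leaves implicit.
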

\begin{proof}
If $a=1$ and $d \geq bq$, then every $\a \in \red(d)$ satisfies $a_2 \leq q-1$ except $(0,d/b)$ if $b \mid d$. If $b\nmid d$, the minimum of $L$ on $\red(d)$ is $q-\ell$ as given by Proposition \ref{prop:cases1and2}. If $b\mid d$, then $\Floorfrac{d-b}{ab}=\frac{d}{b}-1 \geq q-1$ and the minimum of $L$ on $\red(d)$ is $1$.
\end{proof}

Now let us assume that $a\geq 2$.

\begin{proposition}\label{prop:minL_ageq2}
Assume that $a\geq 2$ and $d \geq bq$.

If $b \geq a+2$ or $b=a+1$ and $d \geq (a+1)q+\frac{q}{a-1}+1$, then $\min_{\a \in \red(d)} L(\a) = q -\ell$.

Assume $b=a+1$ and $(a+1)q \leq d < (a+1)q+\frac{q}{a-1}+1$.
\begin{itemize}
    \item If $d-(a+1)q \notin \left\langle a,a+1\right\rangle$, then $\min_{\a \in \red(d)} L(\a) = q -\ell$.
    \item If $d-(a+1)q \in \left\langle a,a+1\right\rangle$, set $s=d-\Floorfrac{d}{ab}ab$. Then $\min_{\a \in \red(d)} L(\a)$ is
\[
\begin{cases}
    q- \max\set{\ell, \Floorfrac{d}{ab}-1} & \text{if }  s=0 \: (\text{i.e. } a(a+1)\mid d),\\
    q- \max\set{\ell, \Floorfrac{d}{ab}} & \text{if }  s\notin \left\langle a,a+1\right\rangle \text{ or either } a\mid d \text{ or } a+1 \mid d,\\
    q- \max\set{\ell, \Floorfrac{d}{ab}+1}     & \text{if } s\in \left\langle a,b\right\rangle \text{ with } a \nmid d \text{ and } a+1 \nmid d.
\end{cases}
\]
\end{itemize}
\end{proposition}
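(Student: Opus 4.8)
The plan is to split $\red(d)$ into the region $\set{a_2\le \mu_b}$, already treated, and the points with $a_2\ge q$, and to minimise $L$ separately on each. First I would record the geometry forced by the hypotheses $a\ge 2$ and $bq\le d\le (a+b)(q-1)$: here $\mu_b=q-1$, and since $b>a$ we have $a(q-1)<d$, so Remark \ref{rem:cases} gives $\ell=\alpha_2\ge 1$; by Corollary \ref{cor:min_L_and_tildeL} and Proposition \ref{prop:cases1and2} the minimum of $L$ over $\red(d)\cap\set{a_2\le\mu_b}$ equals $q-\ell$. A point of $\red(d)$ with $a_2\ge q$ lies neither in $P_d^\circ$ nor on a coordinate edge, so it sits on the hypotenuse $H(d)$; and from $d\le(a+b)(q-1)$ together with $\frac1a+\frac1b\le\frac56$ one checks $\den(d;a,b)\le q-1$, so $i_{max}\le q-2$ and, by \eqref{eq:def_t}, $E_h(d)$ comprises \emph{all} interior lattice points of the hypotenuse. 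Finally such a point exists exactly when $d-bq\in\la a,b\ra$: the decomposition $d-bq=am_a+bm_b$ is the same as the hypotenuse carrying the point $(m_a,m_b+q)$, the corner $(0,d/b)$ being the case $b\mid d$.

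The key simplification I would exploit is that $L$ is monotone along the hypotenuse. Writing $N_i=(x'-ib,y'+ia)$ as in \eqref{eq:Eh}, consecutive points differ by $(-b,a)$, so $a_2$ grows by exactly $a$ at each step; hence for the points with $a_2\ge q$ Proposition \ref{prop:func_L} gives
\[L(N_i)=q-1-\Floorfrac{a_2-1}{a}=q-1-\Floorfrac{y'-1}{a}-i,\]
strictly decreasing in $i$. Therefore the minimum over hypotenuse points with $a_2\ge q$ is reached at the top point $N_{i_{max}}$ and equals $q-X$, where $X=1+\Floorfrac{y'-1}{a}+i_{max}$; a short computation with the third formula of Proposition \ref{prop:func_L} shows the corner $(0,d/b)$, when present, equals $N_{i_{max}+1}$ and contributes the same value. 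Combining the two regions,
\[\min_{\a\in\red(d)}L(\a)=q-\max\set{\ell,X}\quad\text{when } d-bq\in\la a,b\ra,\]
and $q-\ell$ otherwise, which already settles the dichotomy on membership in $\la a,a+1\ra$.

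To obtain the explicit sub-cases I would then evaluate $X$. When $b=a+1$ we have $a+1\equiv1\pmod a$, so the first interior point satisfies $y'\le a$, giving $\Floorfrac{y'-1}{a}=0$ and $X=1+i_{max}$. By \eqref{eq:imax} this is $X=\den(d;a,b)-\epsilon$ with $\epsilon=\mathbf{1}_{a\mid d}+\mathbf{1}_{b\mid d}$; substituting $\den(d;a,b)=\Floorfrac{d}{ab}+\den(s;a,b)$ and resolving $\den(s;a,b)\in\set{0,1}$ according to whether $s\in\la a,b\ra$ and whether $a$ or $b$ divides $d$ yields the three announced values $\Floorfrac{d}{ab}-1$, $\Floorfrac{d}{ab}$, $\Floorfrac{d}{ab}+1$ of $X$, hence the three formulas for $\min_{\a}L(\a)=q-\max\set{\ell,X}$.

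It remains to prove that in the complementary regimes the hypotenuse never improves on $q-\ell$, that is $X\le\ell$. This comparison is the main obstacle, because the exact threshold is concealed inside the floor functions. Using the crude bound $X\le 1+\Floorfrac{d/b-1}{a}$ and $\ell\ge\frac{d-1-a(q-1)}{b}-1$, the inequality $X\le\ell$ becomes linear in $d$; plugging in $d\ge bq$ shows it holds for every admissible $q$ once $b\ge a+2$ (there the coefficient $ab-b-a^2$ of $q$ is nonnegative, the equality cases being excluded by $\gcd(a,b)=1$), and for $b=a+1$ precisely when $d\ge(a+1)q+\frac{q}{a-1}+1$, the factor $\frac1{a-1}$ being exactly what emerges after clearing the denominator $a-1$. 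In all these cases $\max\set{\ell,X}=\ell$, so the minimum is $q-\ell$, completing the argument.
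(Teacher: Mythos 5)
Your overall architecture coincides with the paper's: split $\red(d)$ into $\set{a_2\le \mu_b}$ (where Corollary \ref{cor:min_L_and_tildeL} and Proposition \ref{prop:cases1and2} give $q-\ell$) and the hypotenuse points with $a_2\ge q$ (which exist iff $d-bq\in\la a,b\ra$), observe that $L$ decreases along the hypotenuse so that only the last point matters, identify its value as $q-(i_{max}+1)$, and reduce everything to comparing $\ell$ with $i_{max}+1$. Your case analysis of $i_{max}+1$ in terms of $\lambda=\Floorfrac{d}{ab}$ and $s$ for $b=a+1$ is also the paper's.

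The gap is in the final comparison $i_{max}+1\le\ell$. You bound $\ell\ge\frac{d-1-a(q-1)}{b}-1$ and $i_{max}+1\le 1+\Floorfrac{d-b}{ab}$, and claim the resulting linear inequality in $d$ holds at $d=bq$ whenever $b\ge a+2$. It does not: clearing denominators, your sufficient condition at $d=bq$ reads $q\,(ab-b-a^2)\ge 2ab-b+a-a^2$, and a nonnegative coefficient of $q$ is not enough. For $(a,b,q)=(3,5,7)$ this is $7\ge 19$, and for $(a,b,q)=(2,5,4)$ it is $4\ge 13$; both are admissible parameter sets (one checks $b\le a(q-1)$ and $\gcd(a,b)=1$) for which your argument proves nothing, although the conclusion is true. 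The loss comes from replacing the floor $\ell$ by its value minus $1$: the correct route is to compare the floors directly via monotonicity, namely
\[
\frac{d-1-a(q-1)}{b}\ \ge\ \frac{d-b}{ab}+1\ \Longrightarrow\ \ell\ \ge\ \Floorfrac{d-b}{ab}+1\ \ge\ i_{max}+1,
\]
which holds exactly for $d\ge \frac{a^2(q-1)+a}{a-1}+b$; one then shows that $bq\le d$ below this threshold forces $b(q-1)<\frac{a^2(q-1)+a}{a-1}$, which (for coprime $a<b$, $a\ge 2$) can only happen when $b=a+1$, apart from the single exceptional triple $(a,q,b)=(3,3,5)$ that must be checked by hand at $d=15$. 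The same lossy bound also invalidates your claim that the crude inequality yields ``precisely'' the threshold $(a+1)q+\frac{q}{a-1}+1$ when $b=a+1$: your computation produces a threshold larger by $\frac{a^2+a}{a-1}$, so the delicate range $bq\le d<(a+1)q+\frac{q}{a-1}+1$ is not correctly delineated. A secondary loose end: your justification that $\den(d;a,b)\le q-1$ (hence that $E_h(d)$ is not truncated) via $\frac1a+\frac1b\le\frac56$ does not go through for small $q$ and needs a separate check.
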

\begin{proof}
If $d-bq \notin \left\langle a,b\right\rangle$, then for every $\a \in \red(d)$, we have $a_2 \leq \mu_b=q-1$ and the minimum of $L$ is $\tilde{L}(\ell)=q-\ell$ by Proposition \ref{prop:cases1and2}.

Otherwise, there exists $\a=(a_1,a_2) \in H(d)$ such that $a_2 \geq q$ and the minimum of $L$ is given by the minimum of $\tilde{L}(\ell)=q-\ell$ and $\min \set{L(\a) : \a \in H(d) \text{ such that } a_2 \geq q}$.


For $\a=(a_1,a_2)=(x'-ib,y'+ia) \in H(d)$ such that $a_1 \geq 1$ and $a_2 \geq q$ (which is only possible if $d-a-bq \in \left\langle a,b\right\rangle$), we have 
\[L(a_1,a_2)=q-1-\Floorfrac{a_2-1}{a}=q-1-i,\]
which is a decreasing function of $i$, whose minimum is $q-1-i_{max}$ (see Equation \eqref{eq:imax} for a formula for $i_{\max}$).

If $b \mid d$, then
\[L(0,d/b)=q-\Floorfrac{d-b}{ab}\]
and $(x'-i_{\max}b,y'+i_{\max}a)=(b,d/b-a)$. If $d/b-a < q$, then $(0,d/b)$ is the only point of $H(d)$ such that $a_2 \geq q$. Otherwise, one can easily check $L(b,d/b-a)=L(0,d/b)$, \textit{i.e.} $\Floorfrac{d-b}{ab}=i_{\max}+1$.

We are thus led to compare $\ell$ and $i_{\max}+1$, when $d$ is large enough.

\medskip

First, let us prove that for any $d \geq bq$, we have
    \begin{equation}\label{eq:imax_comparison}
        i_{max}+1 \in  \Floorfrac{d-b}{ab} + \set{0,1}.
    \end{equation}
Writing $d=\lambda ab+ s$ with $0\leq s <ab$, we have 
    \begin{equation}\label{eq:value_at_d/b}
        \Floorfrac{d-b}{ab}=\lambda + \Floorfrac{s-b}{ab}=\begin{cases}
    \lambda -1 &\text{if } s < b, \\
    \lambda &\text{if } s \geq b,
\end{cases}
    \end{equation}
    and using Equation \eqref{eq:imax} gives
    \begin{equation}\label{eq:value_imax}
    i_{max}+1=\lambda + \den(s,a,b) - \mathbf{1}_{a \mid d}- \mathbf{1}_{b \mid d}.    
    \end{equation}
   
   \begin{enumerate}[label=(\roman*)]
       \item\label{it:s<b} If $s < b$, $\den(s,a,b)=1$ if and only if $a$ divides $s$, \textit{i.e.} $\den(s,a,b)=\mathbf{1}_{a \mid d}$. In this case, $i_{max}+1=\lambda - \mathbf{1}_{b \mid d}=\lambda - \mathbf{1}_{s = 0}$.

        \item\label{it:sgeqb} If $s \geq b$, then $\den(s,a,b) - \mathbf{1}_{a \mid d}- \mathbf{1}_{b \mid d} \in \set{0,1}$ because $\mathbf{1}_{a \mid d}= \mathbf{1}_{b \mid d} =1$ if and only if $s=0$. 
     
   \end{enumerate} 
    
\medskip

Secondly, note that $\ell \geq \Floorfrac{d-b}{ab} +1 \geq i_{\max}+1$ when $d \geq \frac{a^2(q-1)+a}{a-1}+b$. It is enough to check that 
    \[d \geq \frac{a^2(q-1)+a}{a-1}+b \: \Leftrightarrow \: \frac{d-1-a(q-1)}{b} \geq \frac{d-b}{ab}+1.\]
    and to use the monotony of the floor function. In this case, $\min_{\a\in\red(d)} L(\a)=q-\ell$.
    
    On the contrary, if $bq \leq d < \frac{a^2(q-1)+a}{a-1}+b$, this forces
  \begin{equation}\label{eq:condition}
      b(q-1)  < \frac{a^2(q-1)+a}{a-1}.
  \end{equation}
  Let us prove that \eqref{eq:condition} can only hold if $b=a+1$, except the case where  $a=q=3$ and $b=5$.

If $a=2$ and $q \geq 3$, the condition in \eqref{eq:condition} can be written 
        \[b(q-1) < 4(q-1)+2 \leq 5(q-1),\]
which implies that $b=3$, because $a$ and $b$ are coprime.

Assume that $a \geq 3$, then isolating $b$ in the inequality \eqref{eq:condition} gives 
    \[b < \frac{a^2}{a-1}+\frac{a}{(a-1)(q-1)}=a+1 + \frac{q+a-1}{(a-1)(q-1)}.\]
    \begin{itemize}
        \item If $a \geq 4$ or $q \geq 4$, then $q+a-1 \leq (q-1)(a-1)$. Then $b < a +2$, hence $b=a+1$.
        \item If $a=q=3$, then a direct computation gives $a<b < \frac{21}{4}$, \textit{i.e.} $b\in\set{4,5}$.
    \end{itemize}

If $a=q=3$ and $b=5$, the only case we have to consider is $d=15$, for which $\ell=1>\Floorfrac{d-b}{ab}=0$.

\medskip

As a result, if $b \geq a+2$, then necessarily $\ell \geq \Floorfrac{d-b}{ab} +1$, which means that the minimum of $L$ over $\red(d)$ is equal to $q-\ell$. When $b=a+1$, we can rewrite \[\frac{a^2(q-1)+a}{a-1}+b=(a+1)q+\frac{q}{a-1}+1.\]
It remains to determine the value of $i_{\max}+1$ in terms of $\lambda$.
In case \ref{it:s<b}, we have $s \leq a$ so $s \in \left\langle a,a+1\right\rangle$ if and only if $s=0$, in which case $i_{\max}+1=\lambda-1$, or $s=a$. If $1 \leq s\leq a$, $i_{\max}+1=\lambda$.
In case \ref{it:sgeqb}, we have $s \geq a+1$ and $\den(s,a,b) - \mathbf{1}_{a \mid d}- \mathbf{1}_{b \mid d}=1$ if and only if $s \in \left\langle a,a+1\right\rangle \text{ with } a \nmid s \text{ and } a+1 \nmid s$. Therefore, we have
\[i_{\max}+1=\begin{cases}
    \lambda-1 & \text{if }  s=0 \: (\text{i.e. } a(a+1)\mid s),\\
    \lambda & \text{if }  s\notin \left\langle a,a+1\right\rangle \text{ or either } a\mid s \text{ or } a+1 \mid s,\\
    \lambda +1  & \text{if } s\in \left\langle a,a+1\right\rangle \text{ with } a \nmid s \text{ and } a+1 \nmid s,
\end{cases}
\]
which completes the proof.
\end{proof}

\section{Polynomials of designated weights}\label{sec:poly_weight}

In order to assert that the lower bound for the minimum distance provided by Propositions \ref{prop:minL_small_degree}, \ref{prop:minL_a=1} and \ref{prop:minL_ageq2} are sharp, it is enough to display a polynomial of the corresponding weight.

\begin{proposition}\label{prop:weight_ell}
 Assume that $b > a$, with $a$ and $b$ coprime. For every degree $d$, there exists a polynomial of weight
 \[\tilde{L}(\ell)=\begin{cases}
    q(q-\Floor{\frac{d-1}{a}}) & \text{if } d\le a(q-1),\\
    q-\ell     & \text{if } a(q-1)<d \le (a+b)(q-1),\\
    1          & \text{if }    d > (a+b)(q-1).
\end{cases}\]
\end{proposition}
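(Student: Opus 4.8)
The plan is to argue constructively: in each of the three degree ranges I would exhibit an explicit homogeneous $f \in S_d$ whose codeword $\ev_Y(f)$ has exactly the claimed number of nonzero coordinates. Since the weight of $\ev_Y(f)$ is $n - |V_Y(f)|$, it suffices to count the points of $Y$ at which $f$ does not vanish, and a single family of polynomials will handle all three cases at once. For subsets $A,B \subseteq \F_q^*$ and an integer $e \geq 0$, set
\[
f_{A,B,e} = x_0^{e}\prod_{\gamma \in A}(x_1 - \gamma x_0^a)\prod_{\delta \in B}(x_2 - \delta x_0^b).
\]
Each factor $x_1 - \gamma x_0^a$ is weighted-homogeneous of degree $a$ and each $x_2 - \delta x_0^b$ of degree $b$, so $f_{A,B,e}$ is homogeneous of degree $e + a|A| + b|B|$; I would choose $A,B,e$ in each range to make this equal to $d$.

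The counting splits over the decomposition of $Y$ into its $q^2$ affine points $[1:y_1:y_2]$ and its $q+1$ points at infinity $\{x_0 = 0\}$ (recalled in Remark~\ref{rk:rat_pts}). Whenever $e \geq 1$, the factor $x_0^{e}$ makes $f_{A,B,e}$ vanish at every point at infinity, so these contribute nothing to the weight. On the affine chart, $f_{A,B,e}(1,y_1,y_2) = \prod_{\gamma \in A}(y_1 - \gamma)\prod_{\delta \in B}(y_2 - \delta)$ is nonzero exactly when $y_1 \notin A$ and $y_2 \notin B$; hence $f_{A,B,e}$ is nonzero at precisely $(q-|A|)(q-|B|)$ affine points. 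Therefore, as soon as $e \geq 1$, the weight of $\ev_Y(f_{A,B,e})$ equals $(q-|A|)(q-|B|)$.

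It then remains to specialize the parameters and to verify $e \geq 1$ and $|A|,|B| \leq q-1$ (so that the required distinct elements of $\F_q^*$ exist). For $d \leq a(q-1)$, I would take $|A| = \mu_a = \Floorfrac{d-1}{a}$, $B = \emptyset$ and $e = d - a\mu_a$, giving weight $q(q-\mu_a)$. For $a(q-1) < d \leq (a+b)(q-1)$, take $A = \F_q^*$, $|B| = \ell = \alpha_2$ and $e = d - a(q-1) - b\ell$, giving weight $q - \ell$. For $d > (a+b)(q-1)$, take $A = B = \F_q^*$ and $e = d - (a+b)(q-1)$, giving weight $1$. In each range the defining inequalities $a\mu_a \leq d-1$ and $b\ell \leq d-1-a(q-1)$ (from Remark~\ref{rem:cases}) yield $e \geq 1$, while $d \leq a(q-1)$ forces $\mu_a \leq q-1$ and $d \leq (a+b)(q-1)$ forces $\ell = \alpha_2 \leq q-2$.

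The individual computations are routine; the only genuinely delicate bookkeeping — and hence the main, if mild, obstacle — is checking that the exponent $e$ stays $\geq 1$ across each half-open degree range, since this is precisely what annihilates all points at infinity in one stroke and is where the floor-function definitions of $\mu_a$ and $\ell$ intervene. I would also stress that using the weighted-homogeneous forms $x_1 - \gamma x_0^a$ and $x_2 - \delta x_0^b$ (rather than ordinary linear forms) is what keeps $f_{A,B,e}$ homogeneous of the correct weighted degree; this specializes to the classical construction on $\Pp^2$ and is consistent with the lower bounds of Propositions~\ref{prop:minL_small_degree}, \ref{prop:minL_a=1} and \ref{prop:minL_ageq2}, the hypothesis $b > a$ keeping us away from the non-sharp case $a=b=1$ of Remark~\ref{rk:a=b=1}.
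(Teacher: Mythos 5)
Your proposal is correct and is essentially the paper's own proof: the paper uses exactly the same split polynomials $x_0^{e}\prod_{\gamma}(x_1-\gamma x_0^a)\prod_{\delta}(x_2-\delta x_0^b)$ with the same choices of $e$, $|A|$, $|B|$ in each degree range (it merely writes the four sub-cases separately instead of as one unified family), and the same count of $q+1$ roots at infinity plus $q^2-(q-|A|)(q-|B|)$ affine roots. The verifications you flag ($e\geq 1$ from the floor-function definitions of $\mu_a$ and $\ell$, and $|A|,|B|\leq q-1$) all go through as you describe.
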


\begin{proof}
We provide a polynomial $f$ with $n_f=n-\tilde{L}(\ell)$ roots for each case.\\
\begin{enumerate}[label=(\roman*)]
\item If $d\le a(q-1)$, then $\alpha_2<0$ and $\ell=0$.
We consider the following polynomial,
\[f=x_0^{r+1}\prod_{y_1\in J}(x_1-y_1x_0^a)\] where $r$ is the remainder of the division of $d-1$ by $a$ and $J$ is any subset of $\F_q$ with $|J|=\Floor{\frac{d-1}{a}}$. So, $\deg(f)=r+1+a|J|=d$. The polynomial $f$ has $q+1$ roots with $x_0=0$ and $q|J|$ roots of the form $[1:y_1:y_2]$. In total, it has $q+1+q|J|=q+1+q\Floor{\frac{d-1}{a}}$ roots. Since, 
\[
n-n_f=q^2+q+1-(q+1+q\Floor{\frac{d-1}{a}})=q\left(q-\Floor{\frac{d-1}{a}}\right)=\tilde{L}(0),
\]
the claim follows.
\item If $a(q-1)<d\le a(q-1)+b$,  then $\ell=\alpha_2=0$. The following polynomial
\[f=x_0^{d-(q-1)a}\prod_{y_1\in\F_q^{*}}(x_1-y_1x_0^{a})\] has $(q+1)$ roots with $x_0=0$ and $(q-1)q$ roots of the form $[1:y_1:y_2]$. In total, it has $q^2+1$ roots. Hence, $n-n_f=q^2+q+1-(q^2+1)=q=\tilde{L}(0).$

\item If $a(q-1)+b <d \le (a+b)(q-1)$, then $d-1-a(q-1)=\ell b+r$ with $\ell=\alpha_2\ge 1$ and $0\le r<b$. Let $J'$ be any subset of $\F_q$ with $\size{J'}=\ell$. It is clear that the following polynomial
\[f=x_0^{r+1}\prod_{y_1 \in \F_q^*}(x_1-y_1x_0^a)\prod_{y_2\in J'}(x_2-y_2x_0^b)\]
 has degree $d=r+1+(q-1)a+\ell b$ and $n_f=q^2+\ell+1$, since $f$ has $q+1$ roots with $x_0=0$, $(q-1)(q-\ell)$ roots of the form $[1:y_1:y_2]$ where $y_2\in \F_q\setminus J'$ and $q \ell$ roots of the form $[1:y_1:y_2]$ where $y_2\in J'.$
Thus,
\[n-n_f=q^2+q+1-(q^2+\ell+1)=q-\ell=\tilde{L}(\ell).\]
\item If $(a+b)(q-1) <d$, then $\ell =q-1$. The following polynomial
 \[f=x_0^{d-(q-1)(a+b)}\prod_{y_1\in \F_q^*}(x_1-y_1x_0^{a}) \prod_{y_2\in \F_q^*}(x_2-y_2x_0^{b}) \in S_d
\]
has $q+1$ roots with $x_0=0$, $(q-1)q$ roots of the form $[1:y_1:y_2]$ and $(q-1)$ roots of the form $[1:0:y_2]$ comprising $q^2+q$ roots in total. This means that the weight of the codeword $ev_Y(f)$ is $\tilde{L}(\ell)=1$.
\end{enumerate}
\end{proof}

It remains to deal with polynomials with leading term $\x^\a$ of weight $L(a_1,a_2)$ for $a_2 \geq q$.

\begin{proposition}
Let $a=1$ and $d=d_0b$ with $d_0 \geq q$, then the polynomial
\[f=x_2^{1+d_0-q}\left( x_2^{q-1}-x_0^{b(q-1)} - x_1^{(q-1)(b-1)}\left(x_1^{q-1}-x_0^{q-1}\right)\right) \in S_d\]
satisfies $w(ev_Y(f))=1$.
\end{proposition}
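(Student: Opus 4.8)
The plan is to show directly that the codeword $\ev_Y(f)$ has a single nonzero coordinate, located at the point $[0:0:1]$. First I would verify that $f$ really lies in $S_d$. Recalling $\deg x_0 = \deg x_1 = 1$ and $\deg x_2 = b$, each monomial inside the parenthesis --- namely $x_2^{q-1}$, $x_0^{b(q-1)}$, $x_1^{(q-1)(b-1)}x_1^{q-1}=x_1^{(q-1)b}$ and $x_1^{(q-1)(b-1)}x_0^{q-1}$ --- has degree $b(q-1)$, while the prefactor $x_2^{1+d_0-q}$ has degree $b(1+d_0-q)=d-b(q-1)$, so $f$ is homogeneous of degree $d$. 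I would then record the two inequalities that drive the whole computation: $1+d_0-q\ge 1$ (since $d_0\ge q$) and $(q-1)(b-1)\ge 1$ (using $b\ge 2$, consistent with the assumption $b>a=1$).

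Next I would evaluate $f$ on the $n=q^2+q+1$ points of $Y=\P(1,1,b)(\F_q)$, split as in Remark~\ref{rk:rat_pts} into the $q^2$ affine points $[1:y_1:y_2]$ and the $q+1$ points at infinity, which for $a=1$ are the $q$ points $[0:1:z]$ with $z\in\F_q$ together with $[0:0:1]$. The single identity used throughout is that for $y\in\F_q$ one has $y^{q-1}=1$ if $y\neq 0$ and $y^{q-1}=0$ if $y=0$. Writing $f=x_2^{1+d_0-q}\,g$ with $g$ the bracketed factor, I would argue that on every affine point the block $x_1^{(q-1)(b-1)}(x_1^{q-1}-x_0^{q-1})$ contributes $0$: if $y_1\neq 0$ the factor $y_1^{q-1}-1$ vanishes, while if $y_1=0$ the factor $y_1^{(q-1)(b-1)}$ vanishes because $(q-1)(b-1)\ge 1$. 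Hence $g(1,y_1,y_2)=y_2^{q-1}-1$, which is $0$ when $y_2\neq 0$; and when $y_2=0$ the prefactor $y_2^{1+d_0-q}=0$ annihilates $f$ since the exponent is at least $1$. Thus $f$ vanishes at all $q^2$ affine points. The same reasoning gives $g(0,1,z)=z^{q-1}-1$, and therefore $f([0:1:z])=z^{1+d_0-q}(z^{q-1}-1)=0$ for every $z\in\F_q$.

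Finally, at $[0:0:1]$ both $x_0^{b(q-1)}$ and the $x_1$-block vanish while $x_2^{q-1}=1$, so $g(0,0,1)=1$ and $f([0:0:1])=1\neq 0$. Collecting these evaluations, $\ev_Y(f)$ is zero at the $q^2+q$ points other than $[0:0:1]$ and nonzero there, whence $w(\ev_Y(f))=1$; in particular $f\notin I_d(Y)$ automatically. I do not expect a genuine obstacle, as the argument is a bookkeeping exercise, but the two points requiring care are exactly the exponent inequalities: the prefactor $x_2^{1+d_0-q}$ must carry exponent $\ge 1$ so that it kills the unwanted nonzero values of $g$ at points with $x_2=0$, and one needs $b\ge 2$ so that $x_1^{(q-1)(b-1)}$ kills the $x_1$-block at $x_1=0$. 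This last point is precisely why the construction, and the sharpness of the bound, breaks down for $a=b=1$, in agreement with Remark~\ref{rk:a=b=1}.
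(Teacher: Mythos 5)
Your proof is correct and follows exactly the same route as the paper, which simply asserts in one line that $f$ vanishes everywhere except at $[0:0:1]$; you have merely supplied the routine verification (homogeneity, the affine points, the points $[0:1:z]$, and the point $[0:0:1]$) that the paper leaves implicit. No issues.
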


\begin{proof}The polynomial $f$ vanishes everywhere except at $[0:0:1]$. So, $n_f=q^2+q$ and $w(ev_Y(f))=1$.
\end{proof}

The following example shows that polynomials of weight $L(a_1,a_2)$ for $a_2 \geq q$ may not always exist when $a\geq 2$.

\begin{example}\label{ex:(8,2,3,2)}
For $q=8$, $(a,b)=(2,3)$ and $d= 29$, there is exactly one point $\a=(1,9)$ in $\red(d)$ with $a_2> \mu_b=q-1$. The lower bound on the minimum distance provided by Proposition \ref{prop:minL_ageq2} is $L(1,9)=3$. Using \textsc{Magma}, we checked that the actual minimum distance is $q-\ell=4$. This means that no polynomial with leading term $x_1x_2^9$ has $q^2+q-2$ zeroes.
\end{example}

Therefore, as in the case $a=b=1$ (see Remark \ref{rk:a=b=1}), the footprint bound may not provide the actual minimum distance. For the purpose of finding the minimum distance, it is enough to deal with the cases where $\min_{\a \in \red(d)} L(\a) \neq q-\ell$. 

Polynomials of the form $f=g_1\left(x_2^q-x_0^{b(q-1)}x_2\right)+g_2\left(x_1^q-x_0^{a(q-1)}x_1\right)$, as provided by Proposition \ref{prop:form}, clearly vanish on the domain $x_0 \neq 0$. In order to check if the lower bound of Proposition \ref{prop:minL_ageq2} is reached, we need to count the number of zeroes of polynomials of this form on the line $x_0=0$.

\begin{remark}\label{rk:rat_pts}
The $\F_q$-rational points on the line $x_0 = 0$ consist of the following: the two special points $[0:1:0]$ and $[0:0:1]$ and the points of the form $[0:y_1:y_2]$ for which the ratio $y_2^a/y_1^b$ belongs to $\F_q^*$. Among these, there are exactly $q - 1$ such points of the second type, since each nonzero element of $\F_q^*$ uniquely determines a value of the ratio $y_2^a/y_1^b$, and hence a corresponding point $[0:y_1:y_2]$ up to scalar equivalence. We now analyze when we can simplify the representatives by assuming $y_1 = 1$ and $y_2 \in \F_q^*$. This is always possible if $\gcd(a, q - 1) = 1$, because in this case the map $y_2 \mapsto y_2^a$ is a bijection on $\F_q^*$. Therefore, for every value of $y_2^a/y_1^b \in \F_q^*$, we can find a unique $y_2 \in \F_q^*$ such that $[0:1:y_2]$ represents the point. However, if $\gcd(a, q - 1) \ne 1$, then this simplification is only possible for certain points. Specifically, let $\eta$ be a generator of the multiplicative group $\F_q^*$. Then the equation $y_2^a = \eta^j$ has a solution $y_2 = \eta^{j_2} \in \F_q^*$ if and only if the congruence \[a j_2 \equiv j \pmod{q - 1}\] has a solution. This happens if and only if $\gcd(a, q - 1)$ divides $j$. Consequently, the number of such solvable $j$ values (and hence the number of such points in the form $[0:1:\eta^i]$) is exactly\[\frac{q - 1}{\gcd(a, q - 1)}.\] So, the points for which a representative of the form $[0:1:\eta^i]$ exists are precisely those with $i = 1, \dots, (q -1)/\gcd(a, q - 1)$.
\end{remark}

\begin{proposition}\label{prop:a_divides_q-1}
Assume that $a \mid q-1$ and $b=a+1$. Let $\a=(a_1,a_2) \in H(d)$ with $a_2 \geq q$.
Write $a_2=q+k_2a+r_2$ with $0 \leq r_2 \leq a-1$.
Write $\F_q^*=\left\langle \eta \right\rangle$ and take $J \subset \set{1,\dots,q-1}  \setminus \set{ia : i \in \set{1,\dots,(q-1)/a}}$  of cardinality $k_2$.

\[f=\left(\prod_{j \in J} (x_2^a-\eta^j x_1^b)\right)x_1^{a_1}x_2^{1+r_2}\left( x_2^{q-1}-x_0^{b(q-1)} - x_1^{(q-1)/a}\left(x_1^{q-1}-x_0^{a(q-1)}\right)\right) \]
has weight $L(a_1,a_2)$.
\end{proposition}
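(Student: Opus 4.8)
The plan is to compute the weight directly as $w(\ev_Y(f)) = n - n_f$, with $n_f = |V_Y(f)|$, by locating exactly which rational points of $Y$ are zeros of $f$. First I would record the target value. Since $a_2 \geq q$, the term $\max\{q-a_2,0\}$ vanishes, so the middle case of Proposition \ref{prop:func_L} gives $L(a_1,a_2) = q - 1 - \Floorfrac{a_2-1}{a}$; writing $a_2 = q + k_2 a + r_2$ and using $a \mid q-1$ yields $\Floorfrac{a_2-1}{a} = (q-1)/a + k_2$, so the claim is equivalent to $n_f = q^2 + 2 + k_2 + (q-1)/a$. A routine weighted-degree count (each factor $x_2^a - \eta^j x_1^b$ has degree $ab$, the tail factor $g := x_2^{q-1} - x_0^{b(q-1)} - x_1^{(q-1)/a}(x_1^{q-1} - x_0^{a(q-1)})$ has degree $b(q-1)$, and $b=a+1$) confirms $\deg f = a a_1 + b a_2 = d$, so $f \in S_d$. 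I would assume $a_1 \geq 1$ here, the corner point $(0,d/b)$ requiring the third case of Proposition \ref{prop:func_L} and a separate treatment.

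Next I would dispatch the easy zeros. A direct evaluation shows $g(1,y_1,y_2) = 0$ for every affine point with $y_2 \neq 0$: the first two terms cancel since $y_2^{q-1}=1$, and the last term vanishes since either $y_1 = 0$ or $y_1^{q-1} = 1$. Meanwhile the factor $x_2^{1+r_2}$ kills every affine point with $y_2 = 0$. Hence all $q^2$ affine points lie in $V_Y(f)$. Among the $q+1$ points at infinity, $[0:1:0]$ is killed by $x_2^{1+r_2}$ and $[0:0:1]$ by $x_1^{a_1}$ (using $a_1 \geq 1$), contributing two more zeros.

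The heart of the argument is the remaining $q-1$ points at infinity of the form $[0:y_1:y_2]$ with $y_1 y_2 \neq 0$, which I would parametrize bijectively by the invariant $w = y_2^a/y_1^b \in \F_q^*$; bijectivity onto $\F_q^*$ follows from $\gcd(a,b)=1$, which forces the orbits under $\lambda\cdot(y_1,y_2)=(\lambda^a y_1,\lambda^b y_2)$ to have size $q-1$. On such a point the monomial factors are nonzero, so $f$ vanishes iff $P := \prod_{j\in J}(x_2^a - \eta^j x_1^b)$ or $g$ vanishes. Factoring out $y_1^{b|J|}$ gives $P = 0 \iff w \in \set{\eta^j : j \in J}$, i.e. at exactly $k_2$ points. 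For $g$, since $y_2^{q-1} = y_1^{q-1} = 1$ one gets $g = 1 - y_1^{(q-1)/a}$, and writing $y_1 = \eta^s$ and $w = \eta^m$, the congruence $m \equiv at - bs \equiv -s \pmod a$ (valid because $a \mid q-1$ and $b \equiv 1 \pmod a$) shows $g = 0 \iff a \mid s \iff a \mid m$, which holds at exactly $(q-1)/a$ points. This discrete-logarithm bookkeeping is the step I expect to be the main obstacle, both to state cleanly (checking that $w$ and $y_1^{(q-1)/a}$ are genuine orbit invariants) and to exploit: the decisive point is that $J$ was chosen to avoid the multiples of $a$ in $\set{1,\dots,q-1}$, so the $k_2$ zeros of $P$ and the $(q-1)/a$ zeros of $g$ are disjoint.

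Finally I would assemble the count: $n_f = q^2 + 2 + k_2 + (q-1)/a$, whence $w(\ev_Y(f)) = (q^2+q+1) - n_f = q - 1 - (q-1)/a - k_2 = L(a_1,a_2)$, as desired; this is positive by Remark \ref{rem:L>0} since $\alpha \in \red(d)$.
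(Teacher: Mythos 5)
Your proposal is correct and follows essentially the same route as the paper: verify that $f$ vanishes at all $q^2$ affine points and at $[0:1:0]$, $[0:0:1]$, then count the zeros among the remaining $q-1$ points at infinity via the invariant $y_2^a/y_1^b$, using that $J$ avoids the multiples of $a$ so the $k_2$ zeros of $\prod_{j\in J}(x_2^a-\eta^j x_1^b)$ and the $(q-1)/a$ zeros of the last factor are disjoint. The only difference is that the paper absorbs the corner case $a_1=0$ into the same computation via an indicator $\mathds{1}_{a_1>0}$ (the point $[0:0:1]$ is a zero iff $a_1>0$), whereas you defer it to a separate treatment; that step is routine and your discrete-logarithm bookkeeping is, if anything, more explicit than the paper's.
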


\begin{proof}Since $a_2 \geq q$, Proposition \ref{prop:func_L} says $f$ has at most $n-L(a_1,a_2)$ roots, where
 \[L(a_1,a_2)=q-\mathds{1}_{a_1 > 0}-\Floorfrac{q+k_2a+r_2-1}{a}=q-\mathds{1}_{a_1 > 0} - k_2 - \frac{q-1}{a}.\]
 Notice that if $d \notin \reg(Y)$, then
\[k_2 + \frac{q-1}{a}=\Floorfrac{d-bq-a a_1+b(q-1)}{ab}=\Floorfrac{d-b-a a_1}{ab}\leq\Floorfrac{d}{ab} \leq q-1.\]


Clearly, the associated polynomial $f$ vanishes at $[1:y_1:y_2]$ for every $y_1,y_2\in \F_q$. On the line $x_0=0$, we have
 \[f(0,x_1,x_2)=\left(\prod_{j \in J} (x_2^a-\eta^j x_1^b)\right)x_1^{a_1}x_2^{1+r_2}\left( x_2^{q-1}-x_1^{(q-1)/a}x_1^{q-1}\right).
\]
Obviously, $f$ vanishes at $[0:1:0]$. It vanishes at $[0:0:1]$ if and only if $a_1>0$. The first factor of $f$ vanishes at the $k_2$ points $[0:y_1:y_2]$ such that $y_2^a/y_1^b=\eta^j$ for $j \in J$.
The last factor of $f$ vanishes at the $(q-1)/a$ points $[0:1:\eta^{i}]$ for $i\in \set{1,\dots,(q-1)/a}$. Since $[0:1:\eta^i]=[0:1:\eta^{i'}] \iff q-1 \text{ divides }a(i'-i)$, these points are all distinct. As a result, we have $w(\ev(f))=L(a_1,a_2)$.
\end{proof}

\section{Conclusion and open questions}\label{sec:conclusion}

\begin{theorem}\label{thm:minimum_distance}
Assume that $b\geq a$ are coprime.

If $a=1$, then
\[
d_{min}(C_d)=\begin{cases}
    q(q-d+1) & \text{if } d\le q-1,\\
    q-\Floorfrac{d-q}{b} & \text{if } q \le d < bq,\\
    1           & \text{if } d \geq bq.
\end{cases}
\]

Assume $a \geq 2$. Set $\ell=\Floorfrac{d-1-a(q-1)}{b}$. 

If $b \geq a+2$, then
\[
d_{min}(C_d)=\begin{cases}
    q(q-\Floorfrac{d-1}{a}) & \text{if } d\le a(q-1),\\
    q-\ell & \text{if } a(q-1) < d \leq (a+b)(q-1),\\
    1           & \text{if } d > (a+b)(q-1).
\end{cases}
\]

If $b=a+1$, then

\[
d_{min}(C_d)=\begin{cases}
    q(q-\Floorfrac{d-1}{a}) & \text{if } d\le a(q-1),\\
    q-\ell & \text{if } a(q-1) < d < bq \text{ or } d \geq bq+\frac{q}{a-1}+1,\\
    1           & \text{if } d > (a+b)(q-1).
\end{cases}
\]

If $b=a+1$ and $bq \leq d < bq+\frac{q}{a-1}+1$, several cases have to be distinguished, according to whether $d-(a+1)q$ lies in the semigroup $\left\langle a,a+1\right\rangle_{\N}$ generated by $a$ and $b=a+1$.
        \begin{itemize}
    \item If $d-(a+1)q \notin \left\langle a,a+1\right\rangle_{\N}$, then $d_{min}(C_d) = q -\ell$.
    \item If $d-(a+1)q \in \left\langle a,a+1\right\rangle_{\N}$, set $s=d-\Floorfrac{d}{ab}ab$. Then $d_{min}(C_d)$ is \emph{bounded from below} by
\[
\begin{cases}
    q- \max\set{\ell, \Floorfrac{d}{ab}-1} & \text{if }  s=0 \: (\text{i.e. } a(a+1)\mid d),\\
    q- \max\set{\ell, \Floorfrac{d}{ab}} & \text{if }  s\notin \left\langle a,a+1\right\rangle \text{ or } a\mid s \text{ or } a+1 \mid s,\\
    q- \max\set{\ell, \Floorfrac{d}{ab}+1}     & \text{if } s\in \left\langle a,b\right\rangle \text{ with } a \nmid s \text{ and } a+1 \nmid s.
\end{cases}
\]

The equality holds when the value is $q-\ell$ or when $a \mid q-1$.

\end{itemize}

\end{theorem}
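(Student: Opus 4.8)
The plan is to obtain each displayed value as the common value of two opposite estimates, and in the last regime only as the lower one. The lower estimate is the footprint bound $d_{min}(C_d)\geq \min_{\a\in\red(d)}L(\a)$ furnished by Lemma \ref{lem:mindist}, whose right-hand side has already been computed in Section \ref{sec:min-dist}; the upper estimate comes from exhibiting explicit codewords of the predicted weight, via Proposition \ref{prop:weight_ell}, Proposition \ref{prop:a_divides_q-1}, and the dedicated weight-one polynomial preceding the theorem. The whole argument is thus a case-by-case matching of $\min_\a L(\a)$ against the weight of a concrete polynomial, organized along the three structural cases $a=1$, $(a\geq 2,\ b\geq a+2)$ and $(a\geq 2,\ b=a+1)$, and, inside each, along the degree thresholds $a(q-1)$, $bq$, $(a+b)(q-1)$ and $bq+\tfrac{q}{a-1}+1$.

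First I would assemble the lower bound. For $d<bq$ the minimum of $L$ over $\red(d)$ is delivered by Proposition \ref{prop:minL_small_degree}, which through Corollary \ref{cor:min_L_and_tildeL} and Proposition \ref{prop:cases1and2} equals $\tilde L(\ell)$, namely $q\bigl(q-\Floorfrac{d-1}{a}\bigr)$ when $d\leq a(q-1)$ and $q-\ell$ when $a(q-1)<d$. For $d\geq bq$ I would split on $a$: Proposition \ref{prop:minL_a=1} handles $a=1$, while Proposition \ref{prop:minL_ageq2} handles $a\geq 2$, where $b\geq a+2$ again returns $q-\ell$ and $b=a+1$ returns either $q-\ell$ (for $d\geq bq+\tfrac{q}{a-1}+1$, or whenever $d-(a+1)q\notin\langle a,a+1\rangle$) or the refined values involving $\Floorfrac{d}{ab}$ on the short window $bq\leq d<bq+\tfrac{q}{a-1}+1$. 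Combining these with the identity $\ell=q-1\iff d>(a+b)(q-1)$ (so that $q-\ell=1$ there) reproduces the right-hand sides of the three systems.

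Next I would produce the matching codewords. Proposition \ref{prop:weight_ell} already exhibits, for every degree, a polynomial of weight $\tilde L(\ell)$, which realizes the first two lines of each system and the value $1$ on $d>(a+b)(q-1)$; this closes the $b\geq a+2$ case entirely and the $b=a+1$ case outside the short window. On the window $bq\leq d<bq+\tfrac{q}{a-1}+1$ with $d-(a+1)q\in\langle a,a+1\rangle$ the minimum of $L$ is attained at a point of $H(d)$ with second coordinate $\geq q$; there I would invoke Proposition \ref{prop:a_divides_q-1}, whose polynomial realizes exactly the weight $L(a_1,a_2)$ under the hypothesis $a\mid q-1$, upgrading the lower bound to equality in that subcase. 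For $a=1$ the same scheme applies, the value $1$ on $d\geq bq$ with $b\mid d$ being realized by the weight-one polynomial constructed above, the remaining $a=1$ values agreeing with \cite{CS2024}.

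The main obstacle is precisely the narrow regime $b=a+1$, $bq\leq d<bq+\tfrac{q}{a-1}+1$ with $d-(a+1)q\in\langle a,a+1\rangle$. There the extremal monomial lies on the hypotenuse with exponent $a_2\geq q$, and the footprint bound need not be sharp: Example \ref{ex:(8,2,3,2)} shows a case with no polynomial of the predicted weight, exactly as the footprint fails to be tight for $a=b=1$ in Remark \ref{rk:a=b=1}. I therefore expect equality can be guaranteed only when the extremum equals $q-\ell$ (covered by Proposition \ref{prop:weight_ell}) or when $a\mid q-1$ (covered by Proposition \ref{prop:a_divides_q-1}), and that the remaining subcases must be stated as a lower bound, as they are. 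Pinning down the exact minimum distance outside these subcases would require either a new family of low-weight codewords adapted to the arithmetic of $\langle a,a+1\rangle$, or a transitivity argument analogous to the one available on $\Pp^2$.
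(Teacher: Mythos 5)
Your proposal follows essentially the same route as the paper's own proof: the lower bound is assembled from Propositions \ref{prop:minL_small_degree}, \ref{prop:minL_a=1} and \ref{prop:minL_ageq2} (themselves resting on Lemma \ref{lem:mindist}, Corollary \ref{cor:min_L_and_tildeL} and Proposition \ref{prop:cases1and2}), and the matching upper bounds come from the explicit codewords of Propositions \ref{prop:weight_ell} and \ref{prop:a_divides_q-1}, with equality left open exactly in the narrow window $b=a+1$, $bq\leq d<bq+\frac{q}{a-1}+1$ when the extremum differs from $q-\ell$ and $a\nmid q-1$. You also correctly identify, as the paper does via Example \ref{ex:(8,2,3,2)} and Remark \ref{rk:a=b=1}, why the footprint bound cannot be upgraded to equality there without new codeword constructions.
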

\begin{proof}
The minimum distance and the minimum weight codewords when $a=b=1$ are provided by \cite[Corollary 3.3 \& Theorem 4.3]{GL2023}. Let us assume that $b>a$.

For degrees $d < bq$, the inequality $d_{min}(C_d)\geq \tilde{L}(\ell)$ follows from Proposition \ref{prop:minL_small_degree}. Proposition \ref{prop:weight_ell} yields the reverse inequality.

For degrees $d \geq bq$, the lower bound for the minimum distance is given by Proposition \ref{prop:minL_ageq2}. When the bound is $q-\ell$, the equality follows from Proposition \ref{prop:weight_ell}. Proposition \ref{prop:a_divides_q-1} guarantees the equality when $a \mid q-1$.
\end{proof}

\begin{remark}
   Proposition \ref{prop:form} forces polynomials $f$ with weight $w(\ev(f)) \neq q-\ell$ that reach the minimum distance to vanish on the affine plane $x_0 \neq 0$. The lower bound displayed in Theorem \ref{thm:minimum_distance} could also be obtained by factorizing $f(0,x_1,x_2)$:
   \[f(0,x_1,x_2)=h(x_1,x_2) \cdot  \prod_{i=1}^j(x_2^a-\alpha_i x_1^b)\]
   for some $j \leq \Floorfrac{d}{ab}$, $\alpha_1,\dots,\alpha_j \in \F_q$ and where $h$ is not divible by any factor of the form $(x_2^a-\alpha x_1^b)$.
   In this case, $f$ has either $j$, $j+1$ or $j+2$ roots, depending on whether $x_1$ or $x_2$ divide $h$. Indeed,  if $a \nmid d$ (respectively $b \nmid d)$, then $x_2$ (respectively $x_1$) divides $h$ (hence $f(0,x_1,x_2)$), which implies that $f$ vanishes at $[0:1:0]$ (resp.$[0:0:1]$). 
      
   Finally, one can easily check that if $s \notin \left\langle a,b\right\rangle$, then $j \leq \Floorfrac{d}{ab}-1$ and $f$ is divisible by $x_1$ and $x_2$, which yields at most $\Floorfrac{d}{ab}+1$ roots.
   
\end{remark}

\subsection{Refining the lower bound when $b=a+1$ and $bq \leq d < bq+\frac{q}{a-1}+1$}

In this paragraph, we focus on the case where Theorem \ref{thm:minimum_distance} only gives a lower bound on the minimum distance, because we are not able to guarantee the existence of polynomials of a specified weight.
Note that if $w(\ev(f))= L(a_1,a_2)$ then, by Lemma \ref{lem:U}, the union set $\cG \cup \set{f}=\set{f_0,f_1,f_2,f}$ is a Gr\"obner basis of $I(Y,f)$ (see Theorem \ref{thm:ideal} for the definition of $\cG$). In particular, this means that
${\overline{S(f,f_1)}^{\, \cG \cup \set{f}}=0}$. The next proposition characterizes (modulo $I(Y)$) polynomials with such a property.

\begin{proposition}\label{prop:form} Assume that $b > a\geq 2$, with $a$ and $b$ coprime.
Let $d \geq bq$ and $f \in S_d \setminus I(Y)$ such that $\lm(f)=x_1^{a_1}x_2^{a_2}$ with $a_2 \geq q$.
If $\overline{S(f,f_1)}^{\, \cG \cup \set{f}}=0$, which is the case when $w(\ev(f))= L(a_1,a_2)$, there exists $g_1 \in S_{d-bq}$ with $\lm(g_1)=x_1^{a_1}x_2^{a_2-q}$ and $g_2 \in S_{d-aq}$ such that \[f =g_1\left(x_2^q-x_0^{b(q-1)}x_2\right)+g_2\left(x_1^q-x_0^{a(q-1)}x_1\right) \text{ modulo } I(Y).\]
\end{proposition}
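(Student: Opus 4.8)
The plan is to first show that the hypothesis forces $x_0 f \in I(Y)$, i.e. that $f$ vanishes on the affine chart $x_0\neq 0$, and then to exhibit the decomposition by dividing $f$ by the two weighted-homogeneous polynomials
\[
P=x_2^q-x_0^{b(q-1)}x_2,\qquad Q=x_1^q-x_0^{a(q-1)}x_1,
\]
whose crucial properties are $x_0P=f_1$ and $x_0Q=f_2$ (both in $I(Y)$), together with $\lm(P)=x_2^q$ and $\lm(Q)=x_1^q$ for the lex order with $x_2>x_1>x_0$.

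First I would compute the relevant $S$-polynomial. Since $\lm(f)=x_1^{a_1}x_2^{a_2}$ with $a_2\geq q$ and $\lm(f_1)=x_0x_2^q$, their least common multiple is $x_0x_1^{a_1}x_2^{a_2}$, and the identity $f_1=x_0P$ gives
\[
S(f,f_1)=x_0f-x_1^{a_1}x_2^{a_2-q}f_1=x_0\bigl(f-x_1^{a_1}x_2^{a_2-q}P\bigr).
\]
The heart of the first half is to extract from $\overline{S(f,f_1)}^{\,\cG\cup\set{f}}=0$ the fact that $f$ is not actually used in this reduction. Writing the reduction as $S(f,f_1)=A_0f_0+A_1f_1+A_2f_2+Bf$, all summands are weighted-homogeneous of degree $d+1$, so $\deg B=1$; as $x_0$ is the only variable of degree $1$ in the weighted grading, necessarily $B\in\F_q\,x_0$. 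On the other hand the division algorithm guarantees $\lm(Bf)\preceq\lm\bigl(S(f,f_1)\bigr)\prec \lcm(\lm f,\lm f_1)=x_0x_1^{a_1}x_2^{a_2}$, whereas $B\neq0$ would give $\lm(Bf)=x_0x_1^{a_1}x_2^{a_2}$. Hence $B=0$, so $S(f,f_1)\in I(Y)$ and therefore $x_0f=S(f,f_1)+x_1^{a_1}x_2^{a_2-q}f_1\in I(Y)$.

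With $x_0f\in I(Y)$ established, I would run the multivariate division of $f$ by the ordered list $(P,Q)$, obtaining $f=g_1P+g_2Q+r$ with $g_1\in S_{d-bq}$, $g_2\in S_{d-aq}$ and a remainder $r\in S_d$ none of whose monomials is divisible by $\lm(P)=x_2^q$ or $\lm(Q)=x_1^q$. The first step cancels $\lm(f)=x_1^{a_1}x_2^{a_2}$ using the multiplier $x_1^{a_1}x_2^{a_2-q}$ of $P$, and since the subsequent $P$-quotient terms are strictly smaller in the lex order, this pins down $\lm(g_1)=x_1^{a_1}x_2^{a_2-q}$, as required. Multiplying $f=g_1P+g_2Q+r$ by $x_0$ and using $x_0P=f_1$, $x_0Q=f_2$ and $x_0f\in I(Y)$ yields $x_0r\in I(Y)$; equivalently the dehomogenization $\bar r=r(1,x_1,x_2)$ vanishes at every affine point $[1:y_1:y_2]$, $(y_1,y_2)\in\F_q^2$. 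But every monomial of $r$ has $x_1$- and $x_2$-exponents $<q$, so $\bar r$ has degree $<q$ in each variable, and such a bivariate polynomial vanishing on all of $\F_q^2$ is identically zero. Because $r$ is homogeneous, distinct monomials dehomogenize to distinct monomials of $\bar r$, so $r=0$ and $f=g_1P+g_2Q$, which certainly holds modulo $I(Y)$.

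I expect the delicate point to be the vanishing of $B$: it is precisely here that the weighted grading intervenes, through the observation that $x_0$ is the unique degree-one variable, so that the cofactor of $f$ in the reduction is confined to the single monomial that the $S$-polynomial inequality then rules out. The remaining ingredients are routine: that division by homogeneous polynomials respects the weighted grading (placing $g_1,g_2,r$ in the claimed graded components and fixing $\lm(g_1)$), and the standard fact that a polynomial of degree $<q$ in each of two variables vanishing on $\F_q^2$ must be zero.
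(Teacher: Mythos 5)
Your proof is correct, and its second half takes a genuinely different route from the paper's. The first half is identical to the paper's argument: both force the cofactor of $f$ in the standard representation of $S(f,f_1)$ to vanish by combining the degree count $S_1=\Span\{x_0\}$ (this is where $a\geq 2$ enters) with the strict inequality $\lm(S(f,f_1))<\lcm(\lm(f),\lm(f_1))=x_0x_1^{a_1}x_2^{a_2}$. From there the paper stays inside the Gr\"obner representation $S(f,f_1)=g_0f_0+g_1f_1+g_2f_2$: since $f_0$ does not involve $x_0$, the whole identity is divisible by $x_0$, and after dividing by $x_0$ and discarding the $f_0$-term modulo $I(Y)$ one reads off the decomposition with cofactors $x_1^{a_1}x_2^{a_2-q}+g_1$ and $g_2$ inherited from the reduction of the $S$-polynomial. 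You instead retain only the consequence $x_0f\in I(Y)$ and rebuild the decomposition from scratch by dividing $f$ by $(P,Q)$, killing the remainder $r$ semantically: $x_0r\in I(Y)$ forces the dehomogenization $\bar r$ to vanish on $\F_q^2$ while having degree $<q$ in each variable, hence $\bar r=0$, and injectivity of dehomogenization on monomials of a fixed weighted degree gives $r=0$. Your route buys a slightly stronger conclusion (the identity $f=g_1P+g_2Q$ holds exactly, not merely modulo $I(Y)$) and avoids the divisibility bookkeeping around $g_0$; the paper's route is purely syntactic and needs no interpolation argument. The remaining points you rely on --- that division by $x_2^q$ is order-preserving for the chosen lex order (pinning down $\lm(g_1)$), and that a bivariate polynomial of degree $<q$ in each variable vanishing on $\F_q^2$ is zero --- are both correct as stated.
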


\begin{proof}
Assume that the leading coefficient of $f$ is $1$ and write $f=x_1^{a_1}x_2^{a_2}+g$ with $a_2 \geq q$ and $\lm(g) < x_1^{a_1}x_2^{a_2}.$ On one hand, we have
\begin{equation}\label{eq:S(f,f1)}
S(f,f_1)= x_0f-x_1^{a_1}x_2^{a_2-q}f_1=x_0g + x_0^{(q-1)b+1}x_1^{a_1}x_2^{a_2-q+1}.    
\end{equation}

On the other hand, as $\cG \cup \set{f}$ is a Gr\"obner basis of $I(Y,f)$, the division algorithm enables us to write 
\[
S(f,f_1)=g_0f_0+g_1f_1+g_2f_2+g_3f\]
with some homogeneous polynomials $g_0,\:g_1,\:g_2$ and $g_3$, such that that the leading monomials of each term $g_if_i$ and $g_3f$ are smaller than or equal to $\lm(S(f,f_1))$. As $\deg S(f,f_1)= \deg f +1$, we have either $\deg g_3=1$ or $g_3=0$. The $S$-polynomial satisfies $\lm(S(f,f_1))< \lcm(\lm(f),\lm(f_1))=x_0x_1^{a_1}x_2^{a_2}$, so having $\lm(g_3f) \leq \lm(S(f,f_1))$ forces $g_3=0$ as $S_1=\Span\{x_0\}$.

Noticing that $f_0$ does not depend on $x_0$, we get
    \[g =  -x_0^{(q-1)b}x_1^{a_1}x_2^{a_2-q+1}+(g_0/x_0)f_0+g_1(f_1/x_0)+g_2(f_2/x_0). \]
    Modulo $\cG$, we can assume without loss of generality that $g_0=0$. Then
    \begin{align*}
    f&=x_1^{a_1}x_2^{a_2}-x_0^{(q-1)b}x_1^{a_1}x_2^{a_2-q+1}+g_1(f_1/x_0)+g_2(f_2/x_0)\\
    &=(x_1^{a_1}x_2^{a_2-q}+g_1)f_1/x_0 + g_2 f_2/x_0,
    \end{align*}
    completing the proof.
\end{proof}

\begin{remark}\label{rem:form}
   Modulo $I(Y)$, we can assume that the polynomials $g_1$ and $g_2$ do not involve the variable $x_0$. However, the resulting polynomial $f$ may not be reduced.
   For instance take a reduced  polynomial $f=x_1\left(x_1^{2(q-1)}-x_0^{2a(q-1)}\right) \in \Span(\overline{\M}_d)$. We can write $f=\tilde{g}_2(x_1^q-x_0^{a(q-1)}x_1)$ with $\tilde{g}_2=x_ 1^{q-1}+x_0^{a(q-1)}$. Modulo $f_2$, we have $f  \equiv x_1^{2q-1}-x_0^{a(q-1)}x_1^q=g_2(x_1^q-x_0^{a(q-1)}x_1)$ with $g_2=x_1^{q-1}$.

\end{remark}

\begin{proposition}\label{prop:Delta'}
Let $f\in S_d \setminus I_d(Y)$ such that $\lm(f)=\x^{\a,d}$ with $\a \in \red(d)$ and $a_2 \geq q$. Assume that
$\overline{S(f,f_1)}^{\, \cG\cup \{f\}}\neq 0$ where $f_1=x_{0}x_{2}^q-x_0^{(q-1)b+1}x_2$ is as in Theorem \ref{thm:ideal}. Then $w(\ev_Y(f)) \geq q-\ell$.
\end{proposition}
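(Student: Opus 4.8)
The plan is to convert the hypothesis $\overline{S(f,f_1)}^{\,\cG\cup\{f\}}\neq 0$ into an \emph{extra} leading term for the ideal $I(Y,f)$ and to feed it into the footprint machinery of Lemmas \ref{lem: footprintbound} and \ref{lem:U}. Set $r:=\overline{S(f,f_1)}^{\,\cG\cup\{f\}}$. Since $S(f,f_1)\in I(Y,f)$ and each reduction step subtracts a multiple of an element of $\cG\cup\{f\}\subset I(Y,f)$, we get $r\in I(Y,f)$; moreover $r$ is reduced modulo $\cG\cup\{f\}$, so $\lm(r)$ is divisible by none of $\lm(f_0),\lm(f_1),\lm(f_2),\lm(f)$, i.e. $\lm(r)\in\overline{\M}_{d+1}$ with $\lm(f)\nmid\lm(r)$. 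As $f_0,f_1,f_2,f,r$ all lie in $I(Y,f)$, for $\Tid\in\reg(Y)$ large enough we obtain
\[
n_f\leq \tilde n_f(\Tid)=\size{\Delta_{\Tid}(I(Y,f))}\leq n-\size{\overline{\nabla}_{\Tid}(\lm(f))}-\size{\{M\in\overline{\M}_{\Tid}:\lm(r)\mid M,\ \lm(f)\nmid M\}}.
\]
Since $\size{\overline{\nabla}_{\Tid}(\lm(f))}=L(\a)$, proving $w(\ev_Y(f))\geq q-\ell$ amounts to showing that the shadow of the new leading term contributes at least $(q-\ell)-L(\a)$ genuinely new monomials.

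The structural fact I would establish first is that $r$ is divisible by $x_0$. Every term of $S(f,f_1)=x_0g+x_0^{(q-1)b+1}x_1^{a_1}x_2^{a_2-q+1}$ from \eqref{eq:S(f,f1)} is divisible by $x_0$; at each step of the division algorithm the leading monomial of the current (hence $x_0$-divisible) polynomial carries a factor $x_0$, so whether we reduce by the $x_0$-free elements $f_0,f$ (the cofactor then inherits $x_0$) or by $f_1,f_2$ (whose terms are all $x_0$-divisible), the subtracted polynomial is $x_0$-divisible. By induction $r=x_0\,r'$, so $\lm(r)=x_0\,\lm(r')$ has positive $x_0$-degree. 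Writing $\lm(r')=\x^{\b,d}$ with $\b=(b_1,b_2)$, reducedness modulo $\cG$ forces $b_1,b_2\leq q-1$ and $\b\in\red(d)$ with $\b\neq\a$.

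This is exactly what makes the two removed sets disjoint: because $a_2\geq q$, all $L(\a)=q-1-\Floor{\frac{a_2-1}{a}}$ multiples of $\lm(f)$ in $\overline{\M}_{\Tid}$ lie on the hypotenuse $H(\Tid)$ and so have $x_0$-degree $0$, whereas every multiple of $\lm(r)=x_0\x^{\b}$ has positive $x_0$-degree and hence lies in $R(\Tid)$. Consequently the count of multiples of $\lm(r)$ is $(q-b_1)(q-b_2)$, disjoint from the $L(\a)$ hypotenuse monomials, and the bound becomes $n_f\leq n-L(\a)-(q-b_1)(q-b_2)$. It therefore remains to verify
\[
(q-b_1)(q-b_2)\ \geq\ (q-\ell)-L(\a)\ =\ 1+\Floor{\frac{a_2-1}{a}}-\ell .
\]

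The hard part will be this last inequality, as it requires locating $\lm(r)=x_0\x^{\b}$ precisely enough to control $(q-b_1)(q-b_2)$. I would read off a candidate from the explicit shape of $S(f,f_1)$: after dividing by $x_0$, the surviving term $x_0^{(q-1)b}x_1^{a_1}x_2^{a_2-q+1}$ suggests $\b$ is close to $(a_1,a_2-q+1)$, whose shadow $(q-a_1)(2q-1-a_2)$ is comfortably large because $aa_1+ba_2=d$ with $d$ near $bq$ forces $a_1$ small and $a_2$ near $q$, so the inequality holds with ample room in that subcase. The genuine obstacle is controlling the situation where a term of $x_0g$ overtakes this candidate as the leading monomial of $r$, so that $\b$ differs from $(a_1,a_2-q+1)$; there one must still bound $(q-b_1)(q-b_2)$ from below by $1+\Floor{\frac{a_2-1}{a}}-\ell$. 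I expect this to need a short case analysis restricted to the decisive regime $b=a+1$, $bq\leq d<bq+\tfrac{q}{a-1}+1$, together with the monotonicity of $y\mapsto\bigl(q-\Floor{\frac{d-1-by}{a}}\bigr)(q-y)$ already exploited in Lemma \ref{lem:tildeL_variation}.
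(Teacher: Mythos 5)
Your setup coincides with the paper's: you pass from the nonzero remainder $r=\overline{S(f,f_1)}^{\,\cG\cup\{f\}}\in I(Y,f)$ to a second excluded shadow in the footprint, you correctly show $x_0\mid \lm(r)$, and your disjointness argument (multiples of $\lm(f)$ sit on $H(\Tid)$ because $a_2\ge q$, multiples of $\lm(r)$ have positive $x_0$-degree) is a valid variant of the paper's observation that a common multiple would be divisible by $\lm(f_1)=x_0x_2^q$. This correctly reduces the claim to the inequality $(q-b_1)(q-b_2)\ge (q-\ell)-L(\a)$.

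However, the proof is not complete: you explicitly defer that inequality to a case analysis on the precise location of $\lm(r)$, worrying about which term of $x_0g$ overtakes $x_0^{(q-1)b+1}x_1^{a_1}x_2^{a_2-q+1}$. No such analysis is needed, and this is the missing idea. Since $r$ is reduced modulo $\cG$, its leading monomial is $\x^{\a',d+1}$ with $\a'\in\red(d+1)$, and since $x_0\mid\lm(r)$ we have $\a'\in\red(d+1)\setminus H(d+1)\subseteq \red(d+1)\cap\{a_2\le\mu_b\}$. Your quantity $(q-b_1)(q-b_2)$ is exactly $L(\a')$ computed in degree $d+1$, so Corollary \ref{cor:min_L_and_tildeL} together with Proposition \ref{prop:cases1and2}, applied at degree $d+1$ (where the relevant parameter is $\Floorfrac{d-a(q-1)}{b}\le \ell+1$), gives
\[
(q-b_1)(q-b_2)=L(\a')\;\ge\; q-\Floorfrac{d-a(q-1)}{b}\;\ge\; q-\ell-1,
\]
and combined with $L(\a)\ge 1$ (Remark \ref{rem:L>0}) this yields $w(\ev_Y(f))\ge L(\a)+L(\a')\ge q-\ell$ uniformly, with no restriction to $b=a+1$ and no enumeration of possible leading terms of $r$. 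Without this step your argument, as written, establishes only the framework and not the bound.
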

\begin{proof}

For $\Tid \geq d$ we define
\[\overline{\Delta}'_{\Tid}(f)= \set{ M \in \overline{\M}_{\Tid}  : \lm(f) \nmid M \text{ and }  \lm(\overline{S(f,f_1)}^{\, \cG\cup \{f\}}) \nmid M }. \]
A similar argument as in Lemma \ref{lem:U} leads to
\begin{equation}\label{eq:nf_Delta'}
\Tilde{n}_f(\Tid)=H_{I(Y,f)}(\Tid) \leq \size{\overline{\Delta}'_{\Tid}(f)}.
\end{equation}

We can express $\overline{\Delta}'_{\Tid}(f)$ in terms of projective shadows as follows:
\[
\overline{\Delta}'_{\Tid}(f) =  \overline{\M}_{\Tid} \setminus 
\left(\overline{\nabla}_{\Tid}(\lm(f)) \cup \overline{\nabla}_{\Tid}(\lm(\overline{S(f,f_1)}^{\, \cG\cup \{f\}}) \right).
\]

\medskip

Note that no monomial $\x^{\tilde{\a},\Tid} \in \overline{\M}_{\Tid}$ is divisible by both $\lm(f)=x_1^{a_1}x_2^{a_2}$ and $\lm(\overline{S(f,f_1)}^{\, \cG\cup \{f\}}$, \textit{i.e.}
\[\overline{\nabla}_{\Tid}(\lm(f)) \cap \overline{\nabla}_{\Tid}(\lm(\overline{S(f,f_1)}^{\, \cG\cup \{f\}}) = \varnothing.\]
Indeed, as $a_2 \geq q$ and $x_0 \mid \lm(\overline{S(f,f_1)}^{\, \cG\cup \{f\}}$ (see Equation \eqref{eq:S(f,f1)}), such a monomial would be divisible by $x_0x_2^q=\lm(f_1)$, which is impossible by definition of $\overline{\M}_{\Tid}$. This means that\[
\size{\overline{\Delta}'_{\Tid}(f)}=n-
\size{\overline{\nabla}_{\Tid}(\lm(f))} - \size{\overline{\nabla}_{\Tid}(\lm(\overline{S(f,f_1)}^{\, \cG\cup \{f\}})}.
\]

Write $\lm(\overline{S(f,f_1)}^{\, \cG\cup \{f\}})=\x^{\a',d+1}$. Provided that $\Tid$ is large enough, Equation \eqref{eq:nf_Delta'} can be rewritten
\begin{align*}w(\ev_Y(f))\geq n-\size{\overline{\Delta}'_{\Tid}(f)}&= \size{\overline{\nabla}_{\Tid}\left(\lm(f)\right)} + \size{\overline{\nabla}_{\Tid}\left(\lm(\overline{S(f,f_1)}^{\, \cG\cup \{f\}})\right)} \\
&= L(\a) + L(\a').
\end{align*}
On one hand, we have $L(\a) \geq 1$ (see Remark \ref{rem:L>0}). On the other hand, note that $\a' \in \red(d+1) \setminus H(d+1)\subseteq \red(d+1) \cap \set{a_2 \leq \mu_b}$ and we can apply Proposition \ref{prop:cases1and2} on degree $d+1$ to bound $L(\a')$ from below. Gathering up, we get
\[w(\ev_Y(f))\geq1+L(\a') \geq 1+q-\Floorfrac{d-a(q-1)}{b} \geq q-\ell,
\]
which proves the statement.
\end{proof}

If one wants to compute exactly the minimum distance in cases for which Theorem \ref{thm:minimum_distance} only supplies for a lower bound, previous results indicate that the study can be restricted to a smaller code.

\begin{corollary}\label{cor:smaller_code} Let $d \geq bq$. Denote by $Z \subset \Pp(1,a,b)$ the set formed by the $q+1$ $\F_q$-points on the line $\set{x_0=0}$. Let us consider the code
\begin{align*}\tilde{C}_{d,Z}=&\Span\left(\set{\ev_Z(\x^{\a,d-bq} x_2^{q}) : \a \in H(d-bq)}\right) \\
&+ \Span\left( \set{\ev_Z(\x^{\a,d-aq} x_1^{q}) : \a \in H(d-aq)}\right)
\end{align*}
The code $\tilde{C}_{d,Z}$ has length $q+1$ and its dimension satisfies 
\[\dim(\tilde{C}_{d,Z})\leq \den(d-aq,a,b)\cdot\den(d-bq,a,b).\]
Moreover, we have $d_{min}(C_d)=\min(q-\ell, d_{min}(\tilde{C}_{d,Z}))$.
\end{corollary}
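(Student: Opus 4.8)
The plan is to prove the three assertions separately; the substance is the minimum distance formula, while the length and the dimension estimate are bookkeeping.

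\emph{Length and one inequality.} That the length is $q+1$ is immediate from Remark~\ref{rk:rat_pts}, which lists the $q+1$ rational points on $\set{x_0=0}$. The crucial structural point is that $\tilde C_{d,Z}$ sits inside $C_d$ as the codewords supported on $Z$. A typical element of $\tilde C_{d,Z}$ reads $\ev_Z(g_1x_2^q+g_2x_1^q)$, with $g_1,g_2$ polynomials in $x_1,x_2$ alone (of degrees $d-bq$ and $d-aq$). Lifting it to $\hat f=g_1(x_2^q-x_0^{b(q-1)}x_2)+g_2(x_1^q-x_0^{a(q-1)}x_1)\in S_d$, I note that $\hat f$ vanishes at every affine point $[1:y_1:y_2]$ since $y_i^q=y_i$ in $\F_q$, whereas $\hat f(0,x_1,x_2)=g_1x_2^q+g_2x_1^q$. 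Thus $\ev_Y(\hat f)$ is supported on $Z$ and has the same (nonzero) weight as the original codeword, so $\hat f\notin I_d(Y)$ and $d_{min}(C_d)\le d_{min}(\tilde C_{d,Z})$. Since $d\ge bq>a(q-1)$, Proposition~\ref{prop:weight_ell} also furnishes a codeword of weight $q-\ell$, giving $d_{min}(C_d)\le\min(q-\ell,d_{min}(\tilde C_{d,Z}))$.

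\emph{The reverse inequality.} Take $f\in S_d\setminus I_d(Y)$ and write $\lm(f)=\x^{\a,d}$ with $\a=(a_1,a_2)\in\red(d)$, which is legitimate by Theorem~\ref{theo:basis_code}. If $a_2\le\mu_b=q-1$, the footprint bound of Lemmas~\ref{lem: footprintbound} and~\ref{lem:U} gives $w(\ev_Y(f))\ge L(\a)$, and the minimum of $L$ over $\red(d)\cap\set{a_2\le\mu_b}$ equals $q-\ell$ by Corollary~\ref{cor:min_L_and_tildeL} together with Propositions~\ref{prop:cases1and2} and~\ref{prop:trivialcase}; hence $w(\ev_Y(f))\ge q-\ell$. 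If instead $a_2\ge q$, then $\a\in H(d)$ and I split on the remainder $\overline{S(f,f_1)}^{\,\cG\cup\set{f}}$. When it is nonzero, Proposition~\ref{prop:Delta'} yields $w(\ev_Y(f))\ge q-\ell$ directly. When it vanishes, Proposition~\ref{prop:form} expresses $f$ modulo $I(Y)$ as $g_1(x_2^q-x_0^{b(q-1)}x_2)+g_2(x_1^q-x_0^{a(q-1)}x_1)$, so $\ev_Y(f)$ is supported on $Z$; since monomials equivalent on the hypotenuse agree on $Y\supseteq Z$, this codeword lies in $\tilde C_{d,Z}$ and, being nonzero, has weight at least $d_{min}(\tilde C_{d,Z})$. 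In every case $w(\ev_Y(f))\ge\min(q-\ell,d_{min}(\tilde C_{d,Z}))$, and combining with the previous paragraph gives the claimed equality.

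\emph{Dimension.} The code $\tilde C_{d,Z}$ is spanned by the $\size{H(d-bq)}$ monomials $\x^{\a,d-bq}x_2^q$ and the $\size{H(d-aq)}$ monomials $\x^{\a,d-aq}x_1^q$, so its dimension is at most the number of these generators. Invoking $\size{H(d')}\le\den(d';a,b)$ from~\eqref{eq:H(d)-rephrase} then controls $\dim\tilde C_{d,Z}$ by the two denumerants $\den(d-bq;a,b)$ and $\den(d-aq;a,b)$.

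\emph{Main obstacle.} The delicate step is the reverse inequality, namely forcing every codeword of weight strictly below $q-\ell$ to be supported on $Z$. This is exactly where the trichotomy $a_2\le\mu_b$ versus $a_2\ge q$ with $\overline{S(f,f_1)}^{\,\cG\cup\set{f}}$ nonzero or zero must be checked to be exhaustive, and where Propositions~\ref{prop:form} and~\ref{prop:Delta'} are combined. One must also verify carefully that the lift $\hat f$ genuinely annihilates the affine chart $x_0\neq0$ and that replacing the full hypotenuse by the reduced sets $H(d-bq)$ and $H(d-aq)$ does not change the relevant span on $Z$.
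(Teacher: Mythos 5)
Your treatment of the minimum distance is correct and follows essentially the same route as the paper: the paper partitions $S_d\setminus I_d(Y)$ into the three classes of polynomials $f$ with $\lm(f)=\x^{\a,d}$ according to whether $a_2<q$, or $a_2\geq q$ with $\overline{S(f,f_1)}^{\,\cG\cup\set{f}}\neq 0$, or $a_2\geq q$ with that remainder zero, and invokes exactly the results you cite (Propositions \ref{prop:cases1and2}, \ref{prop:Delta'}, \ref{prop:form}, \ref{prop:weight_ell} and Remark \ref{rem:form}). Your two-inequality phrasing, with the explicit lift $\hat f=g_1(x_2^q-x_0^{b(q-1)}x_2)+g_2(x_1^q-x_0^{a(q-1)}x_1)$ vanishing on the affine chart, is just a repackaging of the paper's identity $\ev_Y\bigl(S_d^{(3)}\bigr)=\tilde{C}_{d,Z}$, and the trichotomy is exhaustive since $\mu_b=q-1$ when $d\geq bq$.

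The one point of divergence is the dimension bound, and here you should be careful. Your argument --- counting the listed generators and using $\size{H(d')}\leq\den(d';a,b)$ from Equation \eqref{eq:H(d)-rephrase} --- yields the \emph{sum} $\dim(\tilde{C}_{d,Z})\leq \den(d-aq;a,b)+\den(d-bq;a,b)$, whereas the statement asserts the \emph{product}. These are not comparable in general, and the product bound as printed appears to be false: for $q=4$, $(a,b)=(2,3)$ and $d=12=bq$, one has $\den(d-aq;a,b)=\den(4;2,3)=1$ and $\den(d-bq;a,b)=\den(0;2,3)=1$, so the product is $1$; yet $\tilde{C}_{12,Z}$ is spanned by $\ev_Z(x_2^{4})$ and $\ev_Z(x_1^{6})$, which are linearly independent on $Z$ (the first vanishes at $[0:1:0]$ but not at $[0:0:1]$, the second the other way around), so the dimension is $2$. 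The paper's own justification (``follows from Equation \eqref{eq:H(d)-rephrase}'') is the same generator count you perform and likewise produces the sum, so the product is presumably a misprint. Do not try to force the product out of your argument; state and prove the sum bound, which is what your count delivers and all that the minimum-distance assertion requires.
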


\begin{proof}The dimension of $\tilde{C}_{d,Z}$ follows from Equation \eqref{eq:H(d)-rephrase}. We separate $C_{Y,d}$ in three sets: $C_{Y,d}=\ev_{Y,d}(S_d^{(1)} \cup S_d^{(2)} \cup S_d^{(3)})$ where
\begin{align*}
    S_d^{(1)}&=\set{ f\in S_d \text{ such that }\lm(f)=\x^{\a,d} \text{ with } a_2 < q},\\
    S_d^{(2)}&=\set{ f\in S_d \text{ such that }\lm(f)=\x^{\a,d} \text{ with } a_2 \geq q \text{ and } \overline{S(f,f_1)}^{\, \cG\cup \{f\}}\neq 0},\\
    S_d^{(3)}&=\set{ f\in S_d \text{ such that }\lm(f)=\x^{\a,d} \text{ with } a_2 \geq q \text{ and } \overline{S(f,f_1)}^{\, \cG\cup \{f\}}= 0}.
\end{align*}
Then $d_{min}(C_{Y,d})=\min\set{\min\set{w(\ev_{Y}(f)) : f \in S_d^{(i)}} : i \in \set{1,2,3} }$.

As in Theorem \ref{thm:minimum_distance}, $\min\set{w(\ev_{Y}(f)) : f \in S_d^{(1)}} = q-\ell$. Proposition \ref{prop:Delta'} ensures that $w(\ev_Y(f)) \geq q-\ell$ for $f \in S_d^{(2)}$. If $f \in S_d^{(3)}$, we can assume that $f =g_1\left(x_2^q-x_0^{b(q-1)}x_2\right)+g_2\left(x_1^q-x_0^{a(q-1)}x_1\right)$ modulo $I(Y)$ by Proposition \ref{prop:form}. In this case $f$ is zero on $Y \setminus Z$ so $w(\ev_Y(f))=w(\ev_Z(g_1x_2^q+g_2x_1^q))$. We deduce from Remark \ref{rem:form} that $\ev_{Y}\left(S_d^{(3)}\right) = \tilde{C}_{d,Z}$, which concludes the proof.
\end{proof}

\begin{example} \label{eg:the last} Let $q=16$, $X=\P(1,2,3)$, $Y=X(\F_q)$ and $d=48=bq$. We have $\ell=5$, hence $q-\ell=16-5=11$. Since $d-(a+1)q\in \langle a,b\rangle$ and  $ab \mid d$, then $s=0$. Theorem \ref{thm:minimum_distance} gives $d_{min}(C_d) \geq q-\max\{\ell, \Floorfrac{d}{ab}-1\}=\min\set{11,9}=9$. But we checked using Magma  \cite{magma} that the actual minimum distance is $11$. This also follows from Corollary \ref{cor:smaller_code} as the minimum distance of the smaller code $\tilde{C}_{d,Z}$ is $13>11$.
\end{example}

\section*{Acknowledgements} 

The second author expresses her deep gratitude to Rodrigo San José Rubio for some discussions that revealed some mistakes in the early version of the present work. The authors thank the anonymous referees for their valuable questions and suggestions.

	\bibliographystyle{plainnat}      
	\bibliography{ref}   

\begin{thebibliography}{23}
\providecommand{\natexlab}[1]{#1}
\providecommand{\url}[1]{\texttt{#1}}
\expandafter\ifx\csname urlstyle\endcsname\relax
  \providecommand{\doi}[1]{doi: #1}\else
  \providecommand{\doi}{doi: \begingroup \urlstyle{rm}\Url}\fi

\bibitem[Aubry and Perret(2025)]{AP24}
Yves Aubry and Marc Perret.
\newblock Maximum number of rational points on hypersurfaces in weighted
  projective spaces over finite fields.
\newblock \emph{Journal of Algebra and Its Applications}, page 2541015, 2025.
\newblock \doi{10.1142/S0219498825410154}.
\newblock URL \url{https://doi.org/10.1142/S0219498825410154}.

\bibitem[Aubry et~al.(2017)Aubry, Castryck, Ghorpade, Lachaud, O'Sullivan, and
  Ram]{ACGLOR2017}
Yves Aubry, Wouter Castryck, Sudhir~R. Ghorpade, Gilles Lachaud, Michael~E.
  O'Sullivan, and Samrith Ram.
\newblock Hypersurfaces in weighted projective spaces over finite fields with
  applications to coding theory.
\newblock In \emph{Algebraic geometry for coding theory and cryptography},
  volume~9 of \emph{Assoc. Women Math. Ser.}, pages 25--61. Springer, Cham,
  2017.
\newblock \doi{10.1007/978-3-319-63931-4\_2}.
\newblock URL \url{https://doi.org/10.1007/978-3-319-63931-4_2}.

\bibitem[Baldemir and \c{S}ahin(2023)]{BalSah23}
Fadime Baldemir and Mesut \c{S}ahin.
\newblock Calculating the minimum distance of a toric code via algebraic
  algorithms.
\newblock \emph{Math. Comput. Sci.}, 17\penalty0 (3-4):\penalty0 Paper No. 20,
  12, 2023.
\newblock ISSN 1661-8270,1661-8289.
\newblock \doi{10.1007/s11786-023-00566-7}.
\newblock URL \url{https://doi.org/10.1007/s11786-023-00566-7}.

\bibitem[Bavula(1994)]{bavula}
V.~V. Bavula.
\newblock Identification of the {H}ilbert function and {P}oincar\'{e} series,
  and the dimension of modules over filtered rings.
\newblock \emph{Izv. Ross. Akad. Nauk Ser. Mat.}, 58\penalty0 (2):\penalty0
  19--39, 1994.

\bibitem[Beelen et~al.(2019)Beelen, Datta, and Ghorpade]{BDG2019}
Peter Beelen, Mrinmoy Datta, and Sudhir~R. Ghorpade.
\newblock Vanishing ideals of projective spaces over finite fields and a
  projective footprint bound.
\newblock \emph{Acta Math. Sin. (Engl. Ser.)}, 35\penalty0 (1):\penalty0
  47--63, 2019.
\newblock ISSN 1439-8516,1439-7617.
\newblock \doi{10.1007/s10114-018-8024-7}.
\newblock URL \url{https://doi.org/10.1007/s10114-018-8024-7}.

\bibitem[Bosma et~al.(1997)Bosma, Cannon, and Playoust]{magma}
Wieb Bosma, John Cannon, and Catherine Playoust.
\newblock The {M}agma algebra system. {I}. {T}he user language.
\newblock \emph{J. Symbolic Comput.}, 24\penalty0 (3-4):\penalty0 235--265,
  1997.
\newblock ISSN 0747-7171.
\newblock \doi{10.1006/jsco.1996.0125}.
\newblock URL \url{http://dx.doi.org/10.1006/jsco.1996.0125}.
\newblock Computational algebra and number theory (London, 1993).

\bibitem[Bruns and Herzog(1993)]{BJ1993}
Winfried Bruns and J\"{u}rgen Herzog.
\newblock \emph{Cohen-{M}acaulay rings}, volume~39 of \emph{Cambridge Studies
  in Advanced Mathematics}.
\newblock Cambridge University Press, Cambridge, 1993.
\newblock ISBN 0-521-41068-1.

\bibitem[Caboara and Mascia(2015)]{CM15}
Massimo Caboara and Carla Mascia.
\newblock On the {H}ilbert quasi-polynomials for non-standard graded rings.
\newblock \emph{ACM Communications in Computer Algebra}, 49:\penalty0 101--104,
  11 2015.
\newblock \doi{10.1145/2850449.2850461}.

\bibitem[{\c C}ak{\i}ro\u{g}lu and {\c S}ahin(2024)]{CS2024}
Ya\u{g}mur {\c C}ak{\i}ro\u{g}lu and Mesut {\c S}ahin.
\newblock Algebraic invariants of codes on weighted projective planes.
\newblock \emph{Journal of Algebra and Its Applications}, 2024.
\newblock \doi{10.1142/S0219498825503487}.
\newblock URL \url{https://doi.org/10.1142/S0219498825503487}.

\bibitem[Charalambous et~al.(2016)Charalambous, Thoma, and Vladoiu]{CTV2016}
Hara Charalambous, Apostolos Thoma, and Marius Vladoiu.
\newblock Binomial fibers and indispensable binomials.
\newblock \emph{Journal of Symbolic Computation}, 74:\penalty0 578--591, 2016.
\newblock ISSN 0747-7171.
\newblock \doi{https://doi.org/10.1016/j.jsc.2015.09.005}.
\newblock URL
  \url{https://www.sciencedirect.com/science/article/pii/S0747717115000966}.

\bibitem[Dolgachev(1981)]{Dol81}
Igor Dolgachev.
\newblock Weighted projective varieties.
\newblock In \emph{Group Actions and Vector Fields: Proceedings of a
  Polish-North American Seminar Held at the University of British Columbia
  January 15--February 15, 1981}, pages 34--71. Springer, 1981.

\bibitem[Geil and H{\o}holdt(2000)]{GH2000}
Olav Geil and Tom H{\o}holdt.
\newblock Footprints or generalized {B}ezout's theorem.
\newblock \emph{IEEE Trans. Inform. Theory}, 46\penalty0 (2):\penalty0
  635--641, 2000.
\newblock ISSN 0018-9448,1557-9654.
\newblock \doi{10.1109/18.825832}.
\newblock URL \url{https://doi.org/10.1109/18.825832}.

\bibitem[Ghorpade and Ludhani(2023)]{GL2023}
Sudhir~R. Ghorpade and Rati Ludhani.
\newblock On the minimum distance, minimum weight codewords, and the dimension
  of projective {R}eed-{M}uller codes.
\newblock \emph{Advances in Mathematics of Communications}, 2023.
\newblock ISSN 1930-5338.
\newblock \doi{10.3934/amc.2023035}.
\newblock URL \url{http://dx.doi.org/10.3934/amc.2023035}.

\bibitem[Mart\'{\i}nez-Bernal et~al.(2017)Mart\'{\i}nez-Bernal, Pitones, and
  Villarreal]{Villarreal2017}
Jos\'{e} Mart\'{\i}nez-Bernal, Yuriko Pitones, and Rafael~H. Villarreal.
\newblock Minimum distance functions of graded ideals and {R}eed-{M}uller-type
  codes.
\newblock \emph{J. Pure Appl. Algebra}, 221\penalty0 (2):\penalty0 251--275,
  2017.
\newblock ISSN 0022-4049,1873-1376.
\newblock \doi{10.1016/j.jpaa.2016.06.006}.
\newblock URL \url{https://doi.org/10.1016/j.jpaa.2016.06.006}.

\bibitem[Nardi(2019)]{JNHir19}
Jade Nardi.
\newblock Algebraic geometric codes on minimal {H}irzebruch surfaces.
\newblock \emph{J. Algebra}, 535:\penalty0 556--597, 2019.
\newblock ISSN 0021-8693.
\newblock \doi{10.1016/j.jalgebra.2019.06.022}.
\newblock URL \url{https://doi.org/10.1016/j.jalgebra.2019.06.022}.

\bibitem[Nardi(2022)]{JNPro22}
Jade Nardi.
\newblock Projective toric codes.
\newblock \emph{Int. J. Number Theory}, 18\penalty0 (1):\penalty0 179--204,
  2022.
\newblock ISSN 1793-0421.
\newblock \doi{10.1142/S1793042122500142}.
\newblock URL \url{https://doi.org/10.1142/S1793042122500142}.

\bibitem[Perret(2003)]{Per03}
Marc Perret.
\newblock On the number of points of some varieties over finite fields.
\newblock \emph{Bulletin of the London Mathematical Society}, 35\penalty0
  (3):\penalty0 309--320, 2003.
\newblock \doi{https://doi.org/10.1112/S0024609302001820}.
\newblock URL
  \url{https://londmathsoc.onlinelibrary.wiley.com/doi/abs/10.1112/S0024609302001820}.

\bibitem[Ram\'{\i}rez~Alfons\'{\i}n(2005)]{A2005}
J.~L. Ram\'{\i}rez~Alfons\'{\i}n.
\newblock \emph{The {D}iophantine {F}robenius problem}, volume~30 of
  \emph{Oxford Lecture Series in Mathematics and its Applications}.
\newblock Oxford University Press, Oxford, 2005.
\newblock \doi{10.1093/acprof:oso/9780198568209.001.0001}.
\newblock URL \url{https://doi.org/10.1093/acprof:oso/9780198568209.001.0001}.

\bibitem[{\c S}ahin(2022)]{MS2022}
Mesut {\c S}ahin.
\newblock Computing vanishing ideals for toric codes, 2022.
\newblock URL \url{https://arxiv.org/abs/2207.01061}.

\bibitem[Sert\"{o}z and \"{O}zl\"{u}k(1991)]{SertozOzluk}
S.~Sert\"{o}z and A.~\"{O}zl\"{u}k.
\newblock On the number of representations of an integer by a linear form.
\newblock \emph{\.{I}stanbul \"{U}niv. Fen Fak. Mat. Derg.}, 50, 1991.

\bibitem[S{\o}rensen(1992)]{S1992}
Anders~Bj{\ae}rt S{\o}rensen.
\newblock Weighted {R}eed-{M}uller codes and algebraic-geometric codes.
\newblock \emph{IEEE Trans. Inform. Theory}, 38\penalty0 (6):\penalty0
  1821--1826, 1992.
\newblock ISSN 0018-9448.
\newblock \doi{10.1109/18.165459}.
\newblock URL \url{https://doi.org/10.1109/18.165459}.

\bibitem[Sturmfels(1996)]{S96}
Bernd Sturmfels.
\newblock \emph{Grobner bases and convex polytopes}, volume~8.
\newblock American Mathematical Soc., 1996.

\bibitem[Wang(2017)]{floorfunc}
Xingbo Wang.
\newblock Brief summary of frequently-used properties of the floor function.
\newblock \emph{IOSR Journal of Mathematics}, 13:\penalty0 46--48, 09 2017.
\newblock \doi{10.9790/5728-1305024648}.

\end{thebibliography}
\end{document}